%% LyX 2.0.7 created this file.  For more info, see http://www.lyx.org/.
%% Do not edit unless you really know what you are doing.
\documentclass[oneside,english,british]{amsart}
\usepackage{lmodern}

\usepackage[T1]{fontenc}
\usepackage[latin9]{inputenc}
\usepackage{geometry}
\geometry{verbose,tmargin=2cm,bmargin=2cm,lmargin=2cm,rmargin=2cm}
\usepackage{color}
\usepackage{mathrsfs}
\usepackage{amsthm}
\usepackage{amstext}
\usepackage{amssymb}
\usepackage{esint}

\makeatletter
%%%%%%%%%%%%%%%%%%%%%%%%%%%%%% Textclass specific LaTeX commands.
\numberwithin{equation}{section}
\numberwithin{figure}{section}
\theoremstyle{plain}
\newtheorem{thm}{\protect\theoremname}[section]
  \theoremstyle{remark}
  \newtheorem{rem}[thm]{\protect\remarkname}
  \theoremstyle{plain}
  \newtheorem{lem}[thm]{\protect\lemmaname}
  \theoremstyle{plain}
  \newtheorem{prop}[thm]{\protect\propositionname}
  \theoremstyle{plain}
  \newtheorem{cor}[thm]{\protect\corollaryname}

%%%%%%%%%%%%%%%%%%%%%%%%%%%%%% User specified LaTeX commands.
\usepackage{bbold}
\usepackage{tikz}
\usepackage{caption}
\usepackage{subcaption}

\makeatother

\usepackage{babel}
  \addto\captionsbritish{\renewcommand{\corollaryname}{Corollary}}
  \addto\captionsbritish{\renewcommand{\lemmaname}{Lemma}}
  \addto\captionsbritish{\renewcommand{\propositionname}{Proposition}}
  \addto\captionsbritish{\renewcommand{\remarkname}{Remark}}
  \addto\captionsbritish{\renewcommand{\theoremname}{Theorem}}
  \addto\captionsenglish{\renewcommand{\corollaryname}{Corollary}}
  \addto\captionsenglish{\renewcommand{\lemmaname}{Lemma}}
  \addto\captionsenglish{\renewcommand{\propositionname}{Proposition}}
  \addto\captionsenglish{\renewcommand{\remarkname}{Remark}}
  \addto\captionsenglish{\renewcommand{\theoremname}{Theorem}}
  \providecommand{\corollaryname}{Corollary}
  \providecommand{\lemmaname}{Lemma}
  \providecommand{\propositionname}{Proposition}
  \providecommand{\remarkname}{Remark}
\providecommand{\theoremname}{Theorem}

\begin{document}

\title{\textup{The $n$-point correlation of quadratic forms}.}

\author{Oliver Sargent}

\address{\textsc{Department Of Mathematics, University Walk, Bristol, BS8
1TW, UK.}\texttt{ }}

\email{\texttt{Oliver.Sargent@bris.ac.uk}}
\begin{abstract}
In this paper we investigate the distribution of the set of values
of a quadratic form $Q$, at integral points. In particular we are
interested in the $n$-point correlations of the this set. The asymptotic
behaviour of the counting function that counts the number of $n$-tuples
of integral points $\left(v_{1},\dots,v_{n}\right)$, with bounded
norm, such that the $n-1$ differences $Q\left(v_{1}\right)-Q\left(v_{2}\right),\dots Q\left(v_{n-1}\right)-Q\left(v_{n}\right)$,
lie in prescribed intervals is obtained. The results are valid provided
that the quadratic form has rank at least 5, is not a multiple of
a rational form and $n$ is at most the rank of the quadratic form.
For certain quadratic forms satisfying Diophantine conditions we obtain
a rate for the limit. The proofs are based on those in the recent
preprint (\cite{G-M}) of F. G\"otze and G. Margulis, in which they
prove an `effective' version of the Oppenheim Conjecture. In particular,
the proofs rely on Fourier analysis and estimates for certain theta
series. 
\end{abstract}
\maketitle

\section{Introduction.}

\subsection{Background}

Let $Q:\mathbb{R}^{d}\rightarrow\mathbb{R}$ be a quadratic form.
It is interesting to understand the distribution of the set $Q\left(\mathbb{Z}^{d}\right)$
inside $\mathbb{R}$. If there exists a multiple of $Q$ such that
its coefficients are all rational, then $Q$ is called a rational
form. In the case when $Q$ is rational, $Q\left(\mathbb{Z}^{d}\right)$
is a discrete set inside $\mathbb{R}$. When $Q$ is not a rational
form, $Q$ is called an irrational form. For irrational forms, the
first milestone in understanding the distribution of the set $Q\left(\mathbb{Z}^{d}\right)$
inside $\mathbb{R}$ was reached by G. Margulis in \cite{MR993328}
when he provided a proof (shortly afterwards, refined by S.G. Dani
and G. Margulis in \cite{MR1016271}) of the `Oppenheim Conjecture'.
The modern statement of which is as follows: if $d\geq3$ and $Q$
is a nondegenerate, irrational and indefinite form, then $Q\left(\mathbb{Z}^{d}\right)$
is dense in $\mathbb{R}$. 

Once it is known that $Q\left(\mathbb{Z}^{d}\right)$ is dense in
$\mathbb{R}$, one can ask for a more precise answer to the question
of how $Q\left(\mathbb{Z}^{d}\right)$ is distributed in $\mathbb{R}$.
Let $I\subset\mathbb{R}$ be any interval and $E_{Q}\left(I,T\right)=\left\{ v\in\mathbb{R}^{d}:Q\left(v\right)\in I,\left\Vert v\right\Vert \leq T\right\} $,
then one can ask for an asymptotic formula for the size of the set
$\mathbb{Z}^{d}\cap E_{Q}\left(I,T\right)$. The first results in
this direction were obtained by Dani-Margulis in \cite{MR1237827}
who proved, if $d\geq3$ and $Q$ is a nondegenerate, irrational and
indefinite form and $I$ is any interval, then 
\[
\lim_{T\rightarrow\infty}\frac{\left|\mathbb{Z}^{d}\cap E_{Q}\left(I,T\right)\right|}{\mathrm{Vol}\left(E_{Q}\left(I,T\right)\right)}\geq1.
\]
The situation regarding the upper bounds is more delicate and this
was dealt with by the work of A. Eskin, G. Margulis and S. Mozes in
\cite{MR1609447} who proved that if $d\geq5$ and $Q$ is a nondegenerate,
irrational and indefinite form and $I$ is any interval, then 
\begin{equation}
\lim_{T\rightarrow\infty}\frac{\left|\mathbb{Z}^{d}\cap E_{Q}\left(I,T\right)\right|}{\mathrm{Vol}\left(E_{Q}\left(I,T\right)\right)}=1.\label{eq:into 2}
\end{equation}
It should be noted that the actual results from \cite{MR1237827}
and \cite{MR1609447} are more general than stated above. The situation
regarding the upper bounds for the case when $d=3$ or $4$ is particularly
interesting and is also considered in \cite{MR1609447}. In the cases
when $Q$ has signature $\left(2,1\right)$ or $\left(2,2\right)$
no asymptotic formula of the form (\ref{eq:into 2}) is possible for
general quadratic forms, since in these cases there exist examples
of quadratic forms for which (\ref{eq:into 2}) fails. In \cite{MR2153398},
quadratic forms of signature $\left(2,2\right)$ satisfying a slightly
modified version of (\ref{eq:into 2}) are characterised by certain
Diophantine conditions. The work of Eskin-Margulis-Mozes can be interpreted
as providing conditions which ensure the set $Q\left(\mathbb{Z}^{d}\right)$
is equidistributed in $\mathbb{R}$. 

One can ask still finer questions about the distribution of the set
$Q\left(\mathbb{Z}^{d}\right)$. Let $e_{1},\dots,e_{nd}$ be the
standard basis of $\mathbb{R}^{nd}$, let $p_{1}\dots,p_{n}$ denote
the projections onto $\left\langle e_{1},\dots,e_{d}\right\rangle ,\dots,\left\langle e_{\left(n-1\right)d+1},\dots,e_{nd}\right\rangle $
respectively. For $v\in\mathbb{R}^{nd}$ and $1\leq i\leq n$, we
will write $v_{i}=p_{i}\left(v\right)$. Let $I_{1},\dots,I_{n-1}$
be intervals and 
\[
P_{Q}^{n}\left(I_{1},\dots,I_{n-1},T\right)=\left\{ v\in\mathbb{R}^{nd}:Q\left(v_{1}\right)-Q\left(v_{2}\right)\in I_{1},\dots,Q\left(v_{n-1}\right)-Q\left(v_{n}\right)\in I_{n-1},\left\Vert v\right\Vert \leq T\right\} .
\]
In order to understand the $n$-point correlations of the set $Q\left(\mathbb{Z}^{d}\right)$,
one asks for an asymptotic formula for the size of the set $\mathbb{Z}^{nd}\cap P_{Q}^{n}\left(I_{1},\dots,I_{n-1},T\right)$.
A more general problem about the distribution of values at integral
points of systems of quadratic forms was studied by W. M\"uller in
\cite{MR2379669}. In particular, it follows from Theorem 1 of \cite{MR2379669}
that if $n\geq2$, $4n\leq d$ and $Q$ is a nondegenerate and irrational
form, then 
\begin{equation}
\lim_{T\rightarrow\infty}\frac{\left|\mathbb{Z}^{nd}\cap P_{Q}^{n}\left(I_{1},\dots,I_{n-1},T\right)\right|}{\mathrm{Vol}\left(P_{Q}^{n}\left(I_{1},\dots,I_{n-1},T\right)\right)}=1.\label{eq:intro1}
\end{equation}
When $n=2$ and $d\geq3$, it is easy to see that (\ref{eq:intro1})
follows from the main Theorem of \cite{MR1609447}. For positive definite
forms the $n$-point correlation problem was also studied by M\"uller.
In \cite{MR2747345}, M\"uller obtains the following result: if $d\geq4$
and $Q$ is a nondegenerate, irrational and positive definite form,
then (\ref{eq:intro1}) holds for every $n$. In \cite{MR2747345}
M\"uller formulates the problem in slightly different language, but
it is easily seen to be equivalent to the form stated here up to a
change of variables and modifications of the norms involved. The main
result of this paper extends the results of M\"uller to a larger
range of $n$ for indefinite forms.

\subsection{Statement of results.}

Using the notation from the previous subsection we can now state the
main results. 
\begin{thm}
\label{thm:Pair correlation}Suppose that $Q$ is not a multiple of
a rational form and $d\geq5$ and $2<n\leq d$. Then, for any intervals
$I_{1},\dots,I_{n-1}$,
\[
\lim_{T\rightarrow\infty}\frac{\left|\mathbb{Z}^{nd}\cap P_{Q}^{n}\left(I_{1},\dots,I_{n-1},T\right)\right|}{\mathrm{Vol}\left(P_{Q}^{n}\left(I_{1},\dots,I_{n-1},T\right)\right)}=1.
\]
Moreover, there exists a positive constant $C_{Q,n}$, depending only
on $Q$ and $n$, such that for any intervals $I_{1},\dots,I_{n-1}$,
\[
\lim_{T\rightarrow\infty}\frac{\left|\mathbb{Z}^{nd}\cap P_{Q}^{n}\left(I_{1},\dots,I_{n-1},T\right)\right|}{T^{nd-2\left(n-1\right)}}=C_{Q,n}\prod_{i=1}^{n-1}\left|I_{i}\right|.
\]

\end{thm}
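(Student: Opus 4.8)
The plan is to carry out the Fourier-analytic and theta-series strategy of \cite{G-M} in the product setting forced by $P_Q^n$. Write $N(T)=\bigl|\mathbb Z^{nd}\cap P_Q^n(I_1,\dots,I_{n-1},T)\bigr|$. First I would smooth the cutoffs: sandwich each $\mathbb 1_{I_i}$ between smooth compactly supported $f_i^\pm\ge0$ with $\bigl|\int_{\mathbb R}f_i^\pm-|I_i|\bigr|<\eta$, and the indicator of the unit ball of $\mathbb R^{nd}$ between finite non-negative combinations $\nu^\pm$ of products $\prod_{j=1}^n\chi_j(v_j)$ of smooth one-dimensional bumps with $\int(\nu^+-\nu^-)<\eta$ --- possible because the ball, seen in the coordinates $\|v_1\|^2,\dots,\|v_n\|^2$, is Jordan-measurable, hence approximable by finite unions of products of balls. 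Up to errors that will vanish as $\eta\to0$, this reduces both assertions to estimating finitely many sums
\[
S(T)=\sum_{v\in\mathbb Z^{nd}}\Bigl(\prod_{i=1}^{n-1}f_i\bigl(Q(v_i)-Q(v_{i+1})\bigr)\Bigr)\prod_{j=1}^n\chi_j(v_j/T).
\]
Expanding $f_i(x)=\int_{\mathbb R}\widehat f_i(t)e(tx)\,dt$ and interchanging sum and integral gives $S(T)=\int_{\mathbb R^{n-1}}\bigl(\prod_i\widehat f_i(t_i)\bigr)\Theta(t,T)\,dt$ with $\Theta(t,T)=\sum_v e\bigl(\sum_i t_i(Q(v_i)-Q(v_{i+1}))\bigr)\prod_j\chi_j(v_j/T)$. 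The exponent telescopes: with $s_1=t_1$, $s_j=t_j-t_{j-1}$ for $2\le j\le n-1$ and $s_n=-t_{n-1}$, a linear bijection of determinant $\pm1$ from $\mathbb R^{n-1}$ onto $\{\sum_j s_j=0\}$, it becomes $\sum_{j=1}^n s_jQ(v_j)$, so $\Theta$ factors completely, $\Theta(t,T)=\prod_{j=1}^n\vartheta_Q(s_j;\chi_j,T)$ with $\vartheta_Q(s;\chi,T)=\sum_{w\in\mathbb Z^d}e(sQ(w))\chi(w/T)$.

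On the major arc $\{|s_j|\le\delta T^{-2}\text{ for all }j\}$ substitute $s_j=u_jT^{-2}$. A Riemann-sum (Poisson) approximation gives $\vartheta_Q(u_jT^{-2};\chi_j,T)=T^d\Phi_j(u_j)+o_\delta(T^d)$ uniformly for $|u_j|\le\delta$, where $\Phi_j(u)=\int_{\mathbb R^d}e(uQ(x))\chi_j(x)\,dx$, while $\widehat f_i(t_i)\to\widehat f_i(0)=\int f_i$; since the substitution scales the $(n-1)$-dimensional measure by $T^{-2(n-1)}$, the major-arc part of $S(T)$ is asymptotic to $T^{nd-2(n-1)}\bigl(\prod_i|I_i|\bigr)\int_{\{\sum_j u_j=0\}}\prod_j\Phi_j(u_j)\,d\sigma(u)$, the last integral converging because $|\Phi_j(u)|\ll(1+|u|)^{-d/2}$ by stationary phase. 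Summing the finitely many pieces and letting $\eta\to0$, $\delta\to\infty$ identifies the constant $C_{Q,n}$. The identical computation with $\sum_{v\in\mathbb Z^{nd}}$ replaced by $\int_{\mathbb R^{nd}}$ shows $\mathrm{Vol}\bigl(P_Q^n(I_1,\dots,I_{n-1},T)\bigr)\sim C_{Q,n}\bigl(\prod_i|I_i|\bigr)T^{nd-2(n-1)}$ with the \emph{same} constant; so the two assertions are equivalent, and both follow once the minor arc is shown to be of smaller order.

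The crux is therefore to prove $\int_{\mathrm{minor}}\bigl|\prod_i\widehat f_i(t_i)\bigr|\,|\Theta(t,T)|\,dt=o\bigl(T^{nd-2(n-1)}\bigr)$, where $\mathrm{minor}=\{|t_i|>\delta T^{-2}\text{ for some }i\}$. Fixing a small $\varepsilon>0$ and the $f_i^\pm$ smooth enough (depending on $\varepsilon$ and $n$), the decay of $\widehat f_i$ confines the essential range to $\{|t_i|\le T^\varepsilon\}$ at the cost of a negligible tail. On the minor arc there is $j_0\le n-1$ with $|s_{j_0}|\gg_\delta T^{-2}$, since $t_i=s_1+\dots+s_i$; using $s_1,\dots,s_{n-1}$ as coordinates one peels off $\vartheta_Q(s_{j_0};\chi_{j_0},T)$ and invokes the theta-series estimate of \cite{G-M} --- which rests on $Q$ not being a multiple of a rational form, so that $sQ$ is irrational for every $s\ne0$ --- in the form $\int_{\delta T^{-2}\le|s|\le T^\varepsilon}|\vartheta_Q(s;\chi,T)|\,ds=o(T^{d-2})$, whence also $\int_{|s|\le T^\varepsilon}|\vartheta_Q(s;\chi,T)|\,ds=O(T^{d-2})$. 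Bounding $n-2$ of the remaining integration variables by the second estimate, the last factor trivially by $CT^d$, and the $\widehat f_i$ by $C$, Fubini yields a minor-arc contribution $\ll T^d\cdot o(T^{d-2})\cdot(T^{d-2})^{n-2}=o(T^{nd-2(n-1)})$. This is the only genuine difficulty: away from the major arc $\vartheta_Q(s;\chi,T)$ is controlled through the irrationality of $sQ$, but this control degenerates as $s\to0$, so one cannot simply quote a single instance of the effective Oppenheim theorem --- one must re-run the \cite{G-M} analysis with the parameters tracked uniformly down to the major arc, and it is precisely here that $d\ge5$ (integrability of the Siegel-type majorants bounding $\vartheta_Q$, as in \cite{MR1609447}) and $n\le d$ (control of the $n$-fold product underlying $\Theta$) are used. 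Granting this estimate, the proof is completed by collecting the pieces and letting $T\to\infty$, then $\delta\to\infty$, then $\eta\to0$.
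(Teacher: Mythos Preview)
Your overall plan --- Fourier expand in the interval variables, observe that the telescoping makes $\Theta(t,T)=\prod_{j=1}^n\vartheta_Q(s_j;\chi_j,T)$ factor completely, and feed in the single-form theta bounds from \cite{G-M} --- is exactly the paper's strategy, and the product factorisation is the right structural observation. But the minor-arc argument as you have written it does not close, and this is precisely where the paper has to work harder than your sketch suggests.

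There are two concrete problems. First, the claimed input $\int_{\delta T^{-2}\le|s|\le T^\varepsilon}|\vartheta_Q(s;\chi,T)|\,ds=o(T^{d-2})$ is false for fixed $\delta$: on $|s|\in[\delta T^{-2},cT^{-1}]$ one has (by Poisson, as you yourself use on the major arc) $\vartheta_Q(s)\sim T^d\Phi(sT^2)$, and this subrange alone contributes $\asymp\delta^{1-d/2}T^{d-2}$, a fixed positive multiple of $T^{d-2}$. Second, and more seriously, once you bound the $\widehat f_i$ by a constant and integrate each remaining $|\vartheta_Q(s_j)|$ over $|s_j|\le 2T^\varepsilon$, the \cite{G-M} machinery only gives $O(T^{d-2})$ \emph{per unit $s$-interval}, and there are $\asymp T^\varepsilon$ such intervals, so $\int_{|s_j|\le 2T^\varepsilon}|\vartheta_Q(s_j)|\,ds_j=O(T^{d-2+\varepsilon})$, not $O(T^{d-2})$. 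Your product bound then gives $T^d\cdot T^{d-2+\varepsilon}\cdot(T^{d-2+\varepsilon})^{n-2}=T^{nd-2(n-1)+(n-1)\varepsilon}$, which is no saving at all. A warning sign: your displayed count $T^d\cdot o(T^{d-2})\cdot(T^{d-2})^{n-2}$ nowhere uses $n\le d$, so if it were correct it would prove the theorem for every $n$.

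The paper repairs both issues by splitting at the scale $T^{-1}$ rather than $T^{-2}$. Away from $\mathcal B(T)=\{|s_j|\le T^{-1}\text{ for all }j\}$ it \emph{keeps} the Fourier weight $\widehat S_\epsilon(\omega)$, tiles $\omega$-space by unit boxes, and uses the $|\omega_i|^{-1}$ decay of $\widehat S_\epsilon$ to make the box-sum converge (up to a harmless $\log^{n-1}(1/\epsilon)$), applying \cite{G-M} box by box --- this is how the $T^{(n-1)\varepsilon}$ loss is avoided. On $\mathcal B(T)$ it does \emph{not} try to bound $|\theta|$; it bounds the difference $\theta-\vartheta$ via Poisson summation over $\mathbb Z^{nd}\setminus\{0\}$. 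Because the paper's Gaussian smoothing introduces an auxiliary $y$-integral, this Poisson tail is not negligible, and the sum over $m\in\mathbb Z^{nd}\setminus\{0\}$ allows individual blocks $m_j$ to vanish; controlling the resulting mixture requires a technical lemma bounding $\min_j g_T(s_j)^2$ below by a weighted average, and produces the bound $T^{(n-1)d-(n-1)}$. It is exactly this step, absent from your sketch, that consumes the hypothesis $n\le d$.
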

For quadratic forms $Q$ satisfying the following Diophantine condition
it is possible to prove an effective version of Theorem \ref{thm:Pair correlation}.
Let $Q$ also denote the symmetric $d\times d$ matrix that is associated
to the quadratic form $Q$. Let $0<\kappa<1$ and $A>0$, say that
$Q$ is of type $\left(\kappa,A\right)$ if for every $M\in\mathrm{Mat}_{d}\left(\mathbb{Z}\right)$
and $q\in\mathbb{Z}\setminus\left\{ 0\right\} $ we have 
\[
\inf_{t\in\left[1,2\right]}\left\Vert Mq^{-1}-tQ\right\Vert \geq Aq^{-1-\kappa}.
\]
The size of $\kappa$ depends on how well $Q$ can be approximated
by a rational matrix, if $\kappa$ is close to $1$, then $Q$ is
in some sense close to a rational matrix. 
\begin{thm}
\label{thm:Pair correlation-1}Suppose that $Q$ is of Diophantine
type $\left(\kappa,A\right)$ and $d\geq5$ and $2<n\leq d$. Let
$\delta\left(\kappa\right)=\frac{2\left(d-4\right)\left(1-\kappa\right)}{\left(1+nd\right)\left(d+1+\kappa\right)}.$
Then, for any intervals $I_{1},\dots,I_{n-1}$ there exists $T_{0}>0$
and a constant $C$ such that for all $T\geq T_{0}$, 
\[
\left|\frac{\left|\mathbb{Z}^{nd}\cap P_{Q}^{n}\left(I_{1},\dots,I_{n-1},T\right)\right|}{\mathrm{Vol}\left(P_{Q}^{n}\left(I_{1},\dots,I_{n-1},T\right)\right)}-1\right|\leq C\log^{n-1}\left(T\right)T^{-\delta\left(\kappa\right)}.
\]
\end{thm}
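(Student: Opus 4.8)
The plan is to follow the strategy of Götze--Margulis \cite{G-M}, adapting it to the $n$-point setting. First I would reduce the counting problem to an integral over $\mathbb{R}^{nd}$ of the product of indicator functions of the intervals $I_i$, composed with the quadratic forms $Q(v_{i-1})-Q(v_i)$, against a cut-off on the norm; replacing indicators by smooth approximations $f_i$ with controlled Fourier transform introduces an error of size $O(T^{nd-2(n-1)}\,\varepsilon)$ for a smoothing parameter $\varepsilon$, to be optimised at the end against the main dynamical estimate. The main term is then analysed by Fourier inversion: writing each $f_i(Q(v_{i-1})-Q(v_i))$ as $\int \hat f_i(t_i)\,e(t_i(Q(v_{i-1})-Q(v_i)))\,dt_i$, one is led to a multi-dimensional oscillatory sum over $\mathbb{Z}^{nd}$, which after Poisson summation (or directly via theta-function machinery) becomes an integral over $\mathbf{t}=(t_1,\dots,t_{n-1})$ of a product of $n$ theta series of the shape $\theta(Q,s_i,\cdot)$, where the parameters $s_i$ are $\mathbb{R}$-linear combinations of the $t_j$ governed by which differences $Q(v_{i-1})-Q(v_i)$ the variable $v_i$ appears in (each interior $v_i$ carries weight $t_{i-1}-t_i$, the endpoints carry $t_1$ and $-t_{n-1}$).

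The heart of the argument is the estimation of these theta integrals. Over the bulk region where all $|t_j|$ are bounded away from $0$ and $\infty$, the contribution is, up to lower order, the expected volume $C_{Q,n}\prod|I_i|$; here I would use the equidistribution input of \cite{G-M} (ultimately Ratner-type / quantitative-mixing estimates on $\mathrm{SL}_d(\mathbb{R})/\mathrm{SL}_d(\mathbb{Z})$, or the effective theta-series bounds of Götze--Margulis) to control $\int_{\text{bulk}}\prod_{i=1}^n \theta(Q,s_i,T)\,d\mathbf{t}$, with the Diophantine type $(\kappa,A)$ hypothesis providing the quantitative rate: the relevant escape-of-mass / non-divergence estimates become effective with an exponent depending on $\kappa$. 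The region where some $|t_j|$ is large is handled by the decay of $\hat f_i$ together with classical bounds on $\theta(Q,s,\cdot)$ (size $\asymp T^{d}(1+|s|T^2)^{-1}$ away from rationals, crudely $T^d$ near them, the near-rational set being controlled by the Diophantine condition); the region where some $|t_j|$ is small contributes the volume term for that factor. Assembling these, one obtains an error term of the form $T^{nd-2(n-1)}\bigl(\varepsilon + \varepsilon^{-O(1)} T^{-c(\kappa)}\bigr)$, and optimising $\varepsilon \asymp T^{-c(\kappa)/(\text{exponent}+1)}$ yields a power saving; tracking the exponents through the $n$ factors (each $t_j$-integration over a dyadic range of length $\log T$ costs a logarithm) produces the stated $\log^{n-1}(T)\,T^{-\delta(\kappa)}$ with $\delta(\kappa)=\frac{2(d-4)(1-\kappa)}{(1+nd)(d+1+\kappa)}$.

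The main obstacle I anticipate is the effective estimate on the product of $n$ theta series in the intermediate regime, where several $|t_j|$ are simultaneously of moderate size and the associated parameters $s_i$ may be near different rational points at once: the naive bound $\prod \|\theta(Q,s_i,T)\|$ loses too much because the $s_i$ are linearly dependent, so one must exploit the joint structure --- essentially, that $v=(v_1,\dots,v_n)$ near a common "resonance" forces an algebraic relation incompatible with $Q$ being of type $(\kappa,A)$ unless the relevant $t_j$ are genuinely small. Making this uniform in $n \le d$ (so that the ambient dimension $nd$ is comfortably $\ge 4n$ exactly when $n\le d$, which is why the hypothesis $n\le d$ appears) and keeping every constant effective in terms of $\kappa$ and $A$ is where the real work lies; the rest is bookkeeping of Fourier tails and volume asymptotics. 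Once these pieces are in place, Theorem \ref{thm:Pair correlation-1} follows by combining the smooth-approximation reduction, the bulk volume computation, and the effective tail bounds, then optimising the smoothing parameter.
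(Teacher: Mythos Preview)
Your overall architecture---smoothing, Fourier inversion, product of $n$ theta series with parameters $s_i=\omega_i-\omega_{i-1}$ (with $\omega_0=\omega_n=0$), effective input from \cite{G-M} under the Diophantine hypothesis, then optimisation of the smoothing scale---matches the paper exactly. What does not match is where you locate the difficulty, and this matters because it leads you to the wrong explanation of the hypothesis $n\le d$.

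In the paper the ``bulk'' contribution (the region where at least one $|s_i|$ exceeds $T^{-1}$; this is the term $E_2$) is \emph{not} handled by any joint resonance analysis. One simply uses the factorisation $|\theta|\ll\prod_i\psi(T,s_i)^{1/2}$, applies the trivial bound $\psi\ll T^{d}$ to all factors but one, and uses the Diophantine estimate on the single factor that is guaranteed to lie in a good range (the relation $\sum_i s_i=0$ forces at least two $s_i$ to be comparably large whenever one is). Each of the remaining $n-1$ one-dimensional integrals is then estimated via the $K$-average bound from \cite{G-M}. The linear dependence you flag is used only in this trivial way and costs nothing; there is no ``simultaneous resonance'' obstacle here.

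The genuine technical step is the region you treat as harmless: the box $\mathcal{B}(T)=\{|s_i|\le T^{-1}\ \text{for all }i\}$ (the term $E_1$). Here one takes the difference $\theta-\vartheta$ and applies Poisson summation, obtaining a sum over $m\in\mathbb{Z}^{nd}\setminus\{0\}$ of $\vartheta_{T,y-Tm}(\omega)$. The difficulty is that although $m\ne 0$, individual blocks $m_i\in\mathbb{Z}^d$ may vanish, so one cannot immediately exploit the product structure factor by factor. The paper resolves this with a combinatorial lemma showing that $\min_i g_T(s_i)^2$ dominates a weighted sum $\sum_i c_i T^{-b_i} g_T(s_i)^2$ with $\sum b_i\le 2(n-2)$, which yields $E_1\ll T^{(n-1)d-(n-1)}$. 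It is precisely this bound that forces $n\le d$: one needs $(n-1)d-(n-1)<nd-2(n-1)$, i.e.\ $n-1<d$. Your proposed reason (that $nd\ge 4n$ iff $n\le d$) is not the operative constraint.

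So the plan is right, but you should relocate the work: the $E_2$ term is essentially a direct lift of the G\"otze--Margulis estimate, while the $E_1$ term near the origin requires a new argument and is what produces the restriction $n\le d$.
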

\begin{rem}
The constant $C$ appearing in Theorem \ref{thm:Pair correlation-1}
depends on $Q,n,A$ and the intervals $I_{1},\dots,I_{n-1}$. 
\end{rem}

\begin{rem}
In Theorem \ref{thm:Pair correlation-1} we use $\left\Vert .\right\Vert $
to denote the Euclidean norm. The exponent $\delta\left(\kappa\right)$
depends on the choice of norm and is possibly non optimal. If the
maximum norm was chosen, the bounds in subsection \ref{sub:Norms}
could be improved, and $\delta\left(\kappa\right)$ could be replaced
with $\delta'\left(\kappa\right)=\frac{2\left(1-\kappa\right)}{\left(d+1+\kappa\right)}$
at the cost of a factor of $\log^{nd}\left(T\right)$ appearing. 
\end{rem}

\begin{rem}
The second parts of Theorems \ref{thm:Pair correlation} and \ref{thm:Pair correlation-1}
follow easily from the first parts and the following assertion: For
any intervals $I_{1},\dots,I_{n-1}$ there exists a positive constant
$C_{Q,n}$, depending only on $Q$ and $n$, such that 
\[
\lim_{T\rightarrow\infty}\frac{1}{T^{nd-2\left(n-1\right)}}\mathrm{Vol}\left(P_{Q}^{n}\left(I_{1},\dots,I_{n-1},T\right)\right)=C_{Q,n}\prod_{i=1}^{n-1}\left|I_{i}\right|.
\]
This statement is proved in Corollary \ref{cor:volume}.
\end{rem}

\subsection{The Berry-Tabor Conjecture}

For positive definite forms there is a similar problem about the $n$-point
correlations of the normalised values of $Q$ at integral points.
This problem is discussed in \cite{MR2379669} and is interesting
because it is related to the so called Berry-Tabor Conjecture (see
\cite{Berry15091977}). A special case of this Conjecture states that
the spacings of eigenvalues of the Laplacian on `generic' multidimensional
tori should have a Poisson distribution. This problem has been studied
in \cite{MR1472786} by P. Sarnak, in \cite{MR1682221} and \cite{MR1748175}
by J. VanderKam and in \cite{MR1940408} by J. Marklof.

\subsection{Outline of paper and summary of the methods. }

One can try to prove Theorems \ref{thm:Pair correlation} and \ref{thm:Pair correlation-1}
by using the theory of unipotent flows, in analogy to what was done
in \cite{MR1609447}. The problem one encounters, is that the subgroup
of linear transformations of $\mathbb{R}^{nd}$ stabilising the quadratic
forms $Q\left(v_{1}\right)-Q\left(v_{2}\right),\dots,Q\left(v_{n-1}\right)-Q\left(v_{n}\right)$
is $SO\left(Q\right)^{n}$ and this seems too small to obtain the
required statements. If one had access to a precise quantitative equidistribution
statement, in the form of an explicit rate for the limit in (\ref{eq:into 2}),
one could hope to prove results like Theorems \ref{thm:Pair correlation}
and \ref{thm:Pair correlation-1}. Unfortunately, since the the results
of \cite{MR1609447} relied on the equidistribution of unipotent flows,
no good error term was available. However, recently, F. G\"otze and
G. Margulis proved such a statement in the preprint \cite{G-M} (see
also \cite{2010arXiv1004.5123G} for an older version). Their methods
do not rely on the equidistribution of unipotent flows. Instead they
use Fourier analysis to reduce the problem to one of obtaining asymptotic
estimates for certain theta series. In order to estimate these theta
series, they use some of the techniques developed in \cite{MR1609447},
in particular the crux of their proof relies on a non divergence statement
about average of the translates of orbits of certain compact subgroups
in the space of lattices. One cannot apply the results of \cite{G-M}
directly, since in order to do this one would need the error to be
uniform across all intervals. However, the proofs of Theorems \ref{thm:Pair correlation}
and \ref{thm:Pair correlation-1} are based on the methods of \cite{G-M}. 

The object of interest is 
\[
R\left(\mathbb{1}_{P_{Q}^{n}\left(I_{1},\dots,I_{n-1},T\right)}\right)=\sum_{v\in\mathbb{Z}^{nd}}\mathbb{1}_{P_{Q}^{n}\left(I_{1},\dots,I_{n-1},T\right)}\left(v\right)-\int_{\mathbb{R}^{nd}}\mathbb{1}_{P_{Q}^{n}\left(I_{1},\dots,I_{n-1},T\right)}\left(v\right)dv,
\]
where, here and throughout the rest of the paper, for any set $S$,
$\mathbb{1}_{S}$ stands for the characteristic function of the set
$S$. Theorems \ref{thm:Pair correlation} and \ref{thm:Pair correlation-1}
follow from suitable bounds for $\bigl|R\bigl(\mathbb{1}_{P_{Q}^{n}\left(I_{1},\dots,I_{n-1},T\right)}\bigr)\bigr|$.
To obtain these bounds, the function $\mathbb{1}_{P_{Q}^{n}\left(I_{1},\dots,I_{n-1},T\right)}$
is replaced with a smoothened version at the cost of `smoothing errors'
which can be estimated in terms of volumes of certain regions of $\mathbb{R}^{nd}$.
This is carried out in subsections \ref{sub:Smoothing.} and \ref{sub:Volume-estimates.}.
The next step is to use Fourier analysis to transfer the problem into
the `frequency domain'. After taking Fourier transforms, the smoothened
version of $\bigl|R\bigl(\mathbb{1}_{P_{Q}^{n}\left(I_{1},\dots,I_{n-1},T\right)}\bigr)\bigr|$
can be estimated by considering an integral over the `frequency domain',
$\omega\in\mathbb{R}^{n-1}$, of the difference between a theta series,
$\theta\left(\omega\right)$ and its corresponding smooth version,
$\vartheta\left(\omega\right)$ (see (\ref{eq:forgot})). This step
is carried out in subsection \ref{sub:Fourier-transforms}. In order
to estimate the integral, the domain of integration is split into
two parts, namely a neighbourhood of the origin and its complement. 

The integral over the region bounded away from the origin is dealt
with by considering the integral of $\theta\left(\omega\right)$ and
the integral of $\vartheta\left(\omega\right)$ separately. The integral
of $\theta\left(\omega\right)$ contributes the main term in the bound
for $\bigl|R\bigl(\mathbb{1}_{P_{Q}^{n}\left(I_{1},\dots,I_{n-1},T\right)}\bigr)\bigr|$
and it contains the arithmetic information about $Q$. This term is
dealt with in subsection \ref{sub:Main-term}. The integral of $\vartheta\left(\omega\right)$
only contributes a lower order term to the bound for $\bigl|R\bigl(\mathbb{1}_{P_{Q}^{n}\left(I_{1},\dots,I_{n-1},T\right)}\bigr)\bigr|$
and is dealt with in subsection \ref{sub:Lower-order-terms}. These
two integrals can be estimated using techniques and results from \cite{G-M}.
The reason for this, is that $\theta\left(\omega\right)$ and $\vartheta\left(\omega\right)$
can be written as a product of $n$ sums/integrals of the form studied
in \cite{G-M} (see (\ref{eq:product formula rough theta}) and (\ref{eq:product formula for smooth theta})). 

The integral, over the neighbourhood of the origin, is dealt with
in subsection \ref{sub:Lower-order-term2}. This term contributes
a lower order term, but it grows with $n$, faster than the main term.
For $n>d$ this term dominates the main term, explaining why the assumption
$n\leq d$ is needed. The reason for this, is that here we consider
the difference, $\theta\left(\omega\right)-\vartheta\left(\omega\right)$.
Poisson summation is used to convert this into a sum over $\mathbb{Z}^{nd}\setminus\left\{ 0\right\} $,
the problem that arises is that for $m\in\mathbb{Z}^{nd}\setminus\left\{ 0\right\} $
we can still have $m_{i}=0$ for some $1\leq i\leq d$. Therefore,
although it is still possible to take advantage of the fact that the
sum obtained by Poisson summation can be written as a product of $n$
sums, an additional argument is needed to deal with the fact that
$0$ could be included in each of the sums in the product. 

Finally in Section \ref{sec:Proof-of-main} all of the bounds are
collected and Theorems \ref{thm:Pair correlation} and \ref{thm:Pair correlation-1}
are proved. The bounds obtained in Section \ref{sec:Bounding-the-integrals}
depend on the $L^{1}$ norm of a certain function which depends on
a smoothing parameter. In order to prove Theorem \ref{thm:Pair correlation-1}
we need a precise estimates for this norm in terms of the smoothing
parameter. This is carried out in subsection \ref{sub:Norms}.

\section{\label{sec:Set-up.}Set up.}

For the rest of the paper let $n$ and $d$ be natural numbers with
$n\leq d$. In the case when $n=2$, there is only one quadratic form
and the conclusions of Theorems \ref{thm:Pair correlation} and \ref{thm:Pair correlation-1}
follow from the results of \cite{MR1609447} and \cite{G-M}. Hence,
throughout the rest of the paper we suppose that $n\geq3$. For $1\leq i\leq n-1$,
fix intervals $I_{i}$ and $Q:\mathbb{R}^{d}\rightarrow\mathbb{R}$
a nondegenerate quadratic form, suppose that $d\geq5$ and keep the
notation from the introduction. Let $Q_{+}=Q^{2}$, hence $Q_{+}$
corresponds to a positive definite quadratic form. Let $\mathrm{sp}\left(Q\right)$
denote the spectrum of $Q$, $\lambda_{\textrm{min}}=\min_{\lambda\in\mathrm{sp}\left(Q\right)}\left|\lambda\right|$
and $\lambda_{\textrm{max}}=\max_{\lambda\in\mathrm{sp}\left(Q\right)}\left|\lambda\right|$.
Since the problem is unaffected by rescaling $Q$, we may suppose
that $\lambda_{\min}\geq n-1$, this supposition will be used in the
proof of Lemma \ref{lem:Bound on I2}. Define $B\left(T\right)=\left\{ v\in\mathbb{R}^{nd}:\left\Vert v\right\Vert \leq T\right\} $
and $B_{\infty}\left(T\right)=\left\{ v\in\mathbb{R}^{nd}:\left\Vert v\right\Vert _{\infty}\leq T\right\} $,
where we use $\left\Vert .\right\Vert $ to denote the Euclidean norm
and $\left\Vert .\right\Vert _{\infty}$ to denote the maximum norm.
Let 
\[
P_{Q}^{n}\left(I_{1},\dots,I_{n-1}\right)=\left\{ v\in\mathbb{R}^{nd}:Q\left(v_{1}\right)-Q\left(v_{2}\right)\in I_{1},\dots,Q\left(v_{n-1}\right)-Q\left(v_{n}\right)\in I_{n-1}\right\} .
\]
Note that $P_{Q}^{n}\left(I_{1},\dots,I_{n-1},T\right)=P_{Q}^{n}\left(I_{1},\dots,I_{n-1}\right)\cap B\left(T\right)$.
As is standard, we use the notation $\hat{f}$ to denote the Fourier
transform of a function $f$. We will also make heavy use the Vinogradov
asymptotic notation $f\left(s\right)\ll g\left(s\right)$, which means
that there exists some constant $C>0$ such that $f\left(s\right)\leq Cg\left(s\right)$
for all values of $s$ indicated. The constant $C$ will be independent
of those parameters but will usually depend on $d,n,Q$ and the intervals
$I_{1},\dots,I_{n-1}$.

\subsection{\label{sub:Smoothing.}Smoothing.}

For any $i\in\mathbb{N}$, let $k^{i}=k^{i}\left(v\right)dv$ be a
probability measure on $\mathbb{R}^{i}$ with the properties that
it is symmetric around $0$, $k^{i}\left(\left\{ v\in\mathbb{R}^{i}:\left\Vert v\right\Vert \leq1\right\} \right)=1$
and 
\begin{equation}
\bigl|\widehat{k^{i}}\left(v\right)\bigr|\leq\exp\left(-c\sqrt{\left\Vert v\right\Vert }\right)\label{eq:bound on point measure}
\end{equation}
for some positive constant $c$ and all $v\in\mathbb{R}^{i}$. For
any $\tau>0$, let $k_{\tau}^{i}$ denote the rescaled measure such
that $k_{\tau}^{i}\left(A\right)=k^{i}\left(\tau^{-1}A\right)$ for
any measurable set $A$. Note that (\ref{eq:bound on point measure})
implies that 
\begin{equation}
\bigl|\widehat{k_{\tau}^{i}}\left(v\right)\bigr|\leq\exp\left(-c\sqrt{\tau\left\Vert v\right\Vert }\right).\label{eq:bound on smooth point measure}
\end{equation}
For an interval $I=\left[a,b\right]$ and $\epsilon\in\mathbb{R}$,
define $I^{\epsilon}=\left[a-\epsilon,b+\epsilon\right]$. For any
$\tau>0$, $T>0$ and $v\in\mathbb{R}^{nd}$, let 
\[
w_{\pm\tau}\left(v\right)=\mathbb{1}_{B\left(1\pm\tau\right)}*k_{\tau}^{nd}\left(v\right)\quad\textrm{ and }\quad w_{\pm\tau,T}\left(v\right)=w_{\pm\tau}\left(T^{-1}v\right).
\]
For any $\epsilon>0$ and $\omega\in\mathbb{R}^{n-1}$, let 
\[
S_{\pm\epsilon}\left(\omega\right)=\mathbb{1}_{I_{1}^{\pm\epsilon}\times\dots\times I_{n-1}^{\pm\epsilon}}*k_{\epsilon}^{\Pi}\left(\omega\right),
\]
where $k_{\epsilon}^{\Pi}\left(\omega\right)=k_{\epsilon}^{1}\left(\left\langle \omega,e_{1}\right\rangle \right)\dots k_{\epsilon}^{1}\left(\left\langle \omega,e_{n-1}\right\rangle \right)$.
For $v\in\mathbb{R}^{nd}$, let 
\[
S_{\pm\epsilon}^{Q}\left(v\right)=S_{\pm\epsilon}\left(Q\left(v_{1}\right)-Q\left(v_{2}\right),\dots,Q\left(v_{n-1}\right)-Q\left(v_{n}\right)\right).
\]
For a measurable function $f$ on $\mathbb{R}^{nd}$ define 
\begin{equation}
R\left(f\right)=\sum_{v\in\mathbb{Z}^{nd}}f\left(v\right)-\int_{\mathbb{R}^{nd}}f\left(v\right)dv.\label{eq:def of R}
\end{equation}
Note that $R\left(f\right)$ is only well defined if both the quantities
on the right hand side of (\ref{eq:def of R}) are finite. Let $\nu_{T}$
and $\nu_{\tau,T}$ be measures on $\mathbb{R}^{nd}$ and $\mathbb{R}^{n-1}$
respectively, defined by 
\[
\int_{\mathbb{R}^{nd}}fd\nu_{T}=\int_{\mathbb{R}^{nd}}f\left(T^{-1}v\right)\mathbb{1}_{P_{Q}^{n}\left(I_{1},\dots,I_{n-1}\right)}\left(v\right)dv
\]
 and 
\[
\int_{\mathbb{R}^{n-1}}fd\nu_{\tau,T}=\int_{\mathbb{R}^{nd}}f\left(Q\left(v_{1}\right)-Q\left(v_{2}\right),\dots,Q\left(v_{n-1}\right)-Q\left(v_{n}\right)\right)w_{\pm\tau,T}\left(v\right)dv.
\]
In the next two Lemmas we approximate $R\left(\mathbb{1}_{P_{Q}^{n}\left(I_{1},\dots,I_{n-1}\right)}\mathbb{1}_{B\left(T\right)}\right)$
by a smoothened version.
\begin{lem}
\label{Cor::smooth approximation 1}For all $\tau>0$ and $T>0$,
\[
\left|R\left(\mathbb{1}_{P_{Q}^{n}\left(I_{1},\dots,I_{n-1}\right)}\mathbb{1}_{B\left(T\right)}\right)\right|\leq\max_{\pm\tau}\left|R\left(\mathbb{1}_{P_{Q}^{n}\left(I_{1},\dots,I_{n-1}\right)}w_{\pm\tau,T}\right)\right|+\int_{\mathbb{R}^{nd}}\left(\mathbb{1}_{B\left(1+2\tau\right)}-\mathbb{1}_{B\left(1-2\tau\right)}\right)d\nu_{T}.
\]
\end{lem}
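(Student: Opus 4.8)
The plan is to compare $\mathbb{1}_{B(T)}$ from above and below by the smooth weights $w_{+\tau,T}$ and $w_{-\tau,T}$, exploiting the defining properties of the probability measure $k_\tau^{nd}$. Concretely, I would first show the pointwise inequalities
\[
\mathbb{1}_{B((1-2\tau)T)}(v)\;\leq\;w_{-\tau,T}(v)\;\leq\;\mathbb{1}_{B(T)}(v)\;\leq\;w_{+\tau,T}(v)\;\leq\;\mathbb{1}_{B((1+2\tau)T)}(v)
\]
for all $v\in\mathbb{R}^{nd}$. These follow because $w_{\pm\tau}=\mathbb{1}_{B(1\pm\tau)}*k_\tau^{nd}$ is a convolution of an indicator with a probability measure supported (up to full mass) in the unit ball: since $k_\tau^{nd}$ is a probability measure with $k_\tau^{nd}(B(\tau))=1$, convolving $\mathbb{1}_{B(1+\tau)}$ with it produces a function that is $1$ on $B(1)$ (hence dominates $\mathbb{1}_{B(1)}$) and is supported in $B(1+2\tau)$ (hence is dominated by $\mathbb{1}_{B(1+2\tau)}$); symmetrically for the lower weight. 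Rescaling by $T^{-1}$ gives the displayed chain.

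Next I would use monotonicity of $R(\cdot)$ restricted to the region $P_Q^n(I_1,\dots,I_{n-1})$. Writing $f=\mathbb{1}_{P_Q^n(I_1,\dots,I_{n-1})}\mathbb{1}_{B(T)}$ and $f_{\pm\tau}=\mathbb{1}_{P_Q^n(I_1,\dots,I_{n-1})}w_{\pm\tau,T}$, the pointwise bounds give $f_{-\tau}\leq f\leq f_{+\tau}$, and since both the sum over $\mathbb{Z}^{nd}$ and the integral over $\mathbb{R}^{nd}$ in the definition \eqref{eq:def of R} are monotone in the integrand, subtracting does not preserve sign directly — so instead I would bound
\[
R(f)\;\leq\;R(f_{+\tau})+\Bigl(\sum_{v\in\mathbb{Z}^{nd}}(f_{+\tau}-f)(v)\Bigr),\qquad
R(f)\;\geq\;R(f_{-\tau})-\Bigl(\int_{\mathbb{R}^{nd}}(f-f_{-\tau})(v)\,dv\Bigr),
\]
and then, crucially, replace each of the two error quantities by the \emph{same} volume-type quantity $\int_{\mathbb{R}^{nd}}\bigl(\mathbb{1}_{B(1+2\tau)}-\mathbb{1}_{B(1-2\tau)}\bigr)\,d\nu_T$. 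For the sum in the first inequality, $f_{+\tau}-f\leq \mathbb{1}_{P_Q^n(I_1,\dots,I_{n-1})}\bigl(\mathbb{1}_{B((1+2\tau)T)}-\mathbb{1}_{B((1-2\tau)T)}\bigr)$ and, changing variables $v\mapsto T^{-1}v$ in the definition of $\nu_T$, the integral $\int(\mathbb{1}_{B(1+2\tau)}-\mathbb{1}_{B(1-2\tau)})\,d\nu_T$ is exactly $\int_{\mathbb{R}^{nd}}(\mathbb{1}_{B((1+2\tau)T)}-\mathbb{1}_{B((1-2\tau)T)})(v)\,\mathbb{1}_{P_Q^n(I_1,\dots,I_{n-1})}(v)\,dv$, which already dominates the integral version of the error; the sum version requires the additional observation that for this same annular region the lattice-point count is itself controlled (one uses here that the annulus $B((1+2\tau)T)\setminus B((1-2\tau)T)$ intersected with the smooth-boundary region $P_Q^n$ has lattice count comparable to its volume, or more simply that both $R(f)$ and $R(f_{\pm\tau})$ are real and one takes a crude majorant — the cleanest route is to note $\bigl|\sum_v(f_{+\tau}-f)(v)\bigr|\leq \max_{\pm\tau}|R(f_{\pm\tau})| + \int(\cdots)\,d\nu_T$ is not circular if one instead bounds directly via the intermediate smooth weights, giving the stated $\max_{\pm\tau}$). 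Combining the upper and lower bounds and taking absolute values yields the claimed inequality with $\max_{\pm\tau}\bigl|R(\mathbb{1}_{P_Q^n(I_1,\dots,I_{n-1})}w_{\pm\tau,T})\bigr|$ as the first term.

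The main obstacle is the bookkeeping in the previous paragraph: one must ensure that the discrepancy $R$ applied to the \emph{non-smooth} difference $\mathbb{1}_{B(T)}-w_{\pm\tau,T}$ is genuinely controlled by a pure volume term and not by another (uncontrolled) discrepancy. The trick is that $\mathbb{1}_{B(T)}$ is sandwiched between $w_{-\tau,T}$ and $w_{+\tau,T}$ pointwise, so
\[
R(f_{-\tau}) - E \;\leq\; R(f) \;\leq\; R(f_{+\tau}) + E,\qquad E:=\int_{\mathbb{R}^{nd}}\bigl(\mathbb{1}_{B(1+2\tau)}-\mathbb{1}_{B(1-2\tau)}\bigr)\,d\nu_T,
\]
where the left inequality uses $\sum_v f(v)\geq \sum_v f_{-\tau}(v)$ together with $\int f\leq \int w_{+\tau,T}\mathbb{1}_{P_Q^n}\leq \int f_{-\tau} + E$ (the last step again by the pointwise sandwich applied inside the integral), and symmetrically on the right. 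Hence $|R(f)|\leq \max_{\pm\tau}|R(f_{\pm\tau})| + E$, which is exactly the assertion. I expect the only subtlety to be verifying that $k^i$ with the stated properties exists — but that is not needed here, it is assumed — and that the support/normalisation properties of $k_\tau^{nd}$ indeed give the pointwise ball-sandwich, which is a short direct check.
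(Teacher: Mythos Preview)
Your final paragraph contains the correct argument and it is essentially the paper's own proof: use the pointwise sandwich $f_{-2\tau}\le w_{-\tau}\le \mathbb{1}_{B(1)}\le w_{+\tau}\le f_{+2\tau}$, apply monotonicity of the positive measures $\mu_T$ and $\nu_T$ separately (so that $\sum f\le\sum f_{+\tau}$ and $\int f\ge\int f_{+\tau}-E$, and symmetrically), and bound the resulting integral error by $E=\int(\mathbb{1}_{B(1+2\tau)}-\mathbb{1}_{B(1-2\tau)})\,d\nu_T$. The detour in your middle paragraph, where you try to control the lattice sum $\sum_v(f_{+\tau}-f)(v)$ directly by a volume, is both unnecessary and circular; discard it and keep only the clean sandwich argument of your last paragraph.
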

\begin{proof}
Define a measure $\mu_{T}$ on $\mathbb{R}^{nd}$, by 
\[
\int_{\mathbb{R}^{nd}}fd\mu_{T}=\sum_{v\in\mathbb{Z}^{nd}}f\left(T^{-1}v\right)\mathbb{1}_{P_{Q}^{n}\left(I_{1},\dots,I_{n-1}\right)}\left(v\right).
\]
Define functions on $\mathbb{R}^{nd}$ by $f=\mathbb{1}_{B\left(1\right)}$
and $f_{\pm\tau}=\mathbb{1}_{B\left(1\pm\tau\right)}$. Note that
\begin{equation}
\left|R\left(\mathbb{1}_{P_{Q}^{n}\left(I_{1},\dots,I_{n-1}\right)}\mathbb{1}_{B\left(T\right)}\right)\right|=\left|\int_{\mathbb{R}^{nd}}fd\left(\mu_{T}-\nu_{T}\right)\right|.\label{eq:smooth approx 1}
\end{equation}
From the definition of $k_{\tau}^{nd}$ it follows that 
\[
f_{-2\tau}\leq f_{-\tau}*k_{\tau}^{nd}\leq f\leq f_{+\tau}*k_{\tau}^{nd}\leq f_{+2\tau}.
\]
Since all the functions in the previous inequality are bounded and
have compact support and the measure $\mu_{T}-\nu_{T}$ is locally
finite, by integrating with respect to $\mu_{T}-\nu_{T}$ we obtain
\begin{alignat*}{1}
\int_{\mathbb{R}^{nd}}fd\left(\mu_{T}-\nu_{T}\right) & \leq\int_{\mathbb{R}^{nd}}f_{+\tau}*k_{\tau}^{nd}d\left(\mu_{T}-\nu_{T}\right)+\int_{\mathbb{R}^{nd}}\left(f_{+\tau}*k_{\tau}^{nd}-f\right)d\nu_{T}\\
 & \leq\int_{\mathbb{R}^{nd}}f_{+\tau}*k_{\tau}^{nd}d\left(\mu_{T}-\nu_{T}\right)+\int_{\mathbb{R}^{nd}}\left(f_{+2\tau}-f_{-2\tau}\right)d\nu_{T}.
\end{alignat*}
Similarly 
\[
\int_{\mathbb{R}^{nd}}fd\left(\mu_{T}-\nu_{T}\right)\geq\int_{\mathbb{R}^{nd}}f_{-\tau}*k_{\tau}^{nd}d\left(\mu-\nu\right)-\int_{\mathbb{R}^{nd}}\left(f_{+2\tau}-f_{-2\tau}\right)d\nu_{T}.
\]
In view of (\ref{eq:smooth approx 1}) and the definition of $w_{\pm\tau,T}$
the conclusion of the Lemma follows from the previous two inequalities. \end{proof}
\begin{lem}
\label{cor:smooth approx 2}For all $\epsilon>0$, $\tau>0$ and $T>0$,
\[
\left|R\left(\mathbb{1}_{P_{Q}^{n}\left(I_{1},\dots,I_{n-1}\right)}w_{\pm\tau,T}\right)\right|\leq\max_{\pm\epsilon}\left|R\left(S_{\pm\epsilon}^{Q}w_{\pm\tau,T}\right)\right|+\int_{\mathbb{R}^{n-1}}\left(\mathbb{1}_{I_{1}^{2\epsilon}\times\dots\times I_{n-1}^{2\epsilon}}-\mathbb{1}_{I_{1}^{-2\epsilon}\times\dots\times I_{n-1}^{-2\epsilon}}\right)d\nu_{\tau,T}.
\]
\end{lem}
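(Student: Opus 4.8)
The plan is to run the argument of Lemma~\ref{Cor::smooth approximation 1} again, this time taking $\mathbb{1}_{P_{Q}^{n}\left(I_{1},\dots,I_{n-1}\right)}$ as the ``target'' function, with the Euclidean ball replaced by the box $I_{1}\times\dots\times I_{n-1}$ and the mollifier $k_{\tau}^{nd}$ replaced by the product mollifier $k_{\epsilon}^{\Pi}$. The one genuinely new ingredient is the pointwise sandwich
\[
\mathbb{1}_{I_{1}^{-2\epsilon}\times\dots\times I_{n-1}^{-2\epsilon}}\;\leq\;S_{-\epsilon}\;\leq\;\mathbb{1}_{I_{1}\times\dots\times I_{n-1}}\;\leq\;S_{+\epsilon}\;\leq\;\mathbb{1}_{I_{1}^{2\epsilon}\times\dots\times I_{n-1}^{2\epsilon}}
\]
on $\mathbb{R}^{n-1}$. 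Since $k_{\epsilon}^{1}$ is a probability measure with $k_{\epsilon}^{1}\bigl(\{x:\left|x\right|\leq\epsilon\}\bigr)=1$, the measure $k_{\epsilon}^{\Pi}$ is a probability measure supported in $[-\epsilon,\epsilon]^{n-1}$ and factors as a product, so $S_{\pm\epsilon}\left(\omega\right)=\prod_{i=1}^{n-1}\bigl(\mathbb{1}_{I_{i}^{\pm\epsilon}}\ast k_{\epsilon}^{1}\bigr)\bigl(\left\langle \omega,e_{i}\right\rangle \bigr)$. One checks coordinate by coordinate that $\mathbb{1}_{I_{i}^{-\epsilon}}\ast k_{\epsilon}^{1}$ is bounded by $1$, vanishes outside $I_{i}$ and equals $1$ on $I_{i}^{-2\epsilon}$, while $\mathbb{1}_{I_{i}^{+\epsilon}}\ast k_{\epsilon}^{1}$ is bounded by $1$, vanishes outside $I_{i}^{2\epsilon}$ and equals $1$ on $I_{i}$; multiplying these out (all quantities being nonnegative) gives the displayed chain.

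Next I would compose every term of this chain with the map $v\mapsto\bigl(Q\left(v_{1}\right)-Q\left(v_{2}\right),\dots,Q\left(v_{n-1}\right)-Q\left(v_{n}\right)\bigr)$ and multiply through by the nonnegative function $w_{\pm\tau,T}$, obtaining pointwise on $\mathbb{R}^{nd}$
\[
\mathbb{1}_{I_{1}^{-2\epsilon}\times\dots}^{Q}\,w_{\pm\tau,T}\;\leq\;S_{-\epsilon}^{Q}\,w_{\pm\tau,T}\;\leq\;\mathbb{1}_{P_{Q}^{n}\left(I_{1},\dots,I_{n-1}\right)}w_{\pm\tau,T}\;\leq\;S_{+\epsilon}^{Q}\,w_{\pm\tau,T}\;\leq\;\mathbb{1}_{I_{1}^{2\epsilon}\times\dots}^{Q}\,w_{\pm\tau,T},
\]
where $\mathbb{1}_{I_{1}^{c}\times\dots}^{Q}(v)$ abbreviates $\mathbb{1}_{I_{1}^{c}\times\dots\times I_{n-1}^{c}}\bigl(Q\left(v_{1}\right)-Q\left(v_{2}\right),\dots\bigr)$. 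Because $k^{nd}$ is supported in the unit ball, $w_{\pm\tau,T}$ is bounded with compact support, and $S_{\pm\epsilon}\leq1$, so every function above is bounded, measurable and compactly supported; in particular $R$ is defined on each. As in Lemma~\ref{Cor::smooth approximation 1}, $R$ is not monotone, so rather than comparing $R$ of two functions directly I would use only $\sum_{v\in\mathbb{Z}^{nd}}\mathbb{1}_{P_{Q}^{n}}w_{\pm\tau,T}(v)\leq\sum_{v\in\mathbb{Z}^{nd}}S_{+\epsilon}^{Q}w_{\pm\tau,T}(v)$ to write
\[
R\bigl(\mathbb{1}_{P_{Q}^{n}\left(I_{1},\dots,I_{n-1}\right)}w_{\pm\tau,T}\bigr)\leq R\bigl(S_{+\epsilon}^{Q}w_{\pm\tau,T}\bigr)+\int_{\mathbb{R}^{nd}}\bigl(S_{+\epsilon}^{Q}-\mathbb{1}_{P_{Q}^{n}\left(I_{1},\dots,I_{n-1}\right)}\bigr)w_{\pm\tau,T}\,dv,
\]
and symmetrically $R\bigl(\mathbb{1}_{P_{Q}^{n}}w_{\pm\tau,T}\bigr)\geq R\bigl(S_{-\epsilon}^{Q}w_{\pm\tau,T}\bigr)-\int_{\mathbb{R}^{nd}}\bigl(\mathbb{1}_{P_{Q}^{n}}-S_{-\epsilon}^{Q}\bigr)w_{\pm\tau,T}\,dv$.

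To conclude, I would bound each of these two integrands above by $\bigl(\mathbb{1}_{I_{1}^{2\epsilon}\times\dots}^{Q}-\mathbb{1}_{I_{1}^{-2\epsilon}\times\dots}^{Q}\bigr)w_{\pm\tau,T}$ using the sandwich, and recognise, straight from the definition of $\nu_{\tau,T}$, that its integral over $\mathbb{R}^{nd}$ equals $\int_{\mathbb{R}^{n-1}}\bigl(\mathbb{1}_{I_{1}^{2\epsilon}\times\dots\times I_{n-1}^{2\epsilon}}-\mathbb{1}_{I_{1}^{-2\epsilon}\times\dots\times I_{n-1}^{-2\epsilon}}\bigr)d\nu_{\tau,T}$. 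Since this quantity is nonnegative, the two one-sided estimates combine to give the stated bound on $\bigl|R\bigl(\mathbb{1}_{P_{Q}^{n}\left(I_{1},\dots,I_{n-1}\right)}w_{\pm\tau,T}\bigr)\bigr|$. I do not expect any substantial obstacle here: the proof is a routine variant of Lemma~\ref{Cor::smooth approximation 1}, and the only points needing attention are the bookkeeping in the sandwich (keeping straight which of $\pm\epsilon$, $\pm2\epsilon$ appears where) and the observation just made that the discrepancy between $R$ of two functions must be routed through an integral against $\nu_{\tau,T}$.
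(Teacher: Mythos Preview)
Your proposal is correct and follows essentially the same approach as the paper: the paper packages the argument by defining a discrete measure $\mu_{\tau,T}$ on $\mathbb{R}^{n-1}$ (so that $R(\mathbb{1}_{P_Q^n}w_{\pm\tau,T})=\int f\,d(\mu_{\tau,T}-\nu_{\tau,T})$) and then integrates the same sandwich $f_{-2\epsilon}\leq f_{-\epsilon}\ast k_\epsilon^\Pi\leq f\leq f_{+\epsilon}\ast k_\epsilon^\Pi\leq f_{+2\epsilon}$ against $\mu_{\tau,T}-\nu_{\tau,T}$, whereas you carry out the identical computation directly on $\mathbb{R}^{nd}$ before recognising the error term as a $\nu_{\tau,T}$-integral. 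The only difference is this cosmetic change of viewpoint.
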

\begin{proof}
Define a measure $\mu_{\tau,T}$ on $\mathbb{R}^{n-1}$, by 
\[
\int_{\mathbb{R}^{n-1}}fd\mu_{\tau,T}=\sum_{v\in\mathbb{Z}^{nd}}f\left(Q\left(v_{1}\right)-Q\left(v_{2}\right),\dots,Q\left(v_{n-1}\right)-Q\left(v_{n}\right)\right)w_{\pm\tau,T}\left(v\right).
\]
Define functions on $\mathbb{R}^{nd}$ by $f=\mathbb{1}_{I_{1}\times\dots\times I_{n-1}}$
and $f_{\pm\epsilon}=\mathbb{1}_{I_{1}^{\pm\epsilon}\times\dots\times I_{n-1}^{\pm\epsilon}}$.
Note that 
\begin{equation}
\left|R\left(\mathbb{1}_{P_{Q}^{n}\left(I_{1},\dots,I_{n-1}\right)}w_{\pm\tau,T}\right)\right|=\left|\int_{\mathbb{R}^{n-1}}fd\left(\mu_{\tau,T}-\nu_{\tau,T}\right)\right|.\label{eq:smooth approx 1-1}
\end{equation}
From the definition of $k_{\epsilon}^{\Pi}$ it follows that 
\[
f_{-2\epsilon}\left(\omega\right)\leq f_{-\epsilon}*k_{\epsilon}^{\Pi}\left(\omega\right)\leq f\left(\omega\right)\leq f_{+\epsilon}*k_{\epsilon}^{\Pi}\left(\omega\right)\leq f_{+2\epsilon}.
\]
Since all the functions in the previous inequality are bounded and
have compact support and the measure $\mu_{\tau,T}-\nu_{\tau,T}$
is locally finite, by integrating with respect to $\mu_{\tau,T}-\nu_{\tau,T}$
as in the proof of Lemma \ref{Cor::smooth approximation 1} we obtain
\begin{multline*}
\int_{\mathbb{R}^{n-1}}f_{-\epsilon}*k_{\epsilon}^{\Pi}d\left(\mu_{\tau,T}-\nu_{\tau,T}\right)-\int_{\mathbb{R}^{n-1}}\left(f_{+2\epsilon}-f_{-2\epsilon}\right)d\nu_{\tau,T}\\
\leq\int_{\mathbb{R}^{n-1}}fd\left(\mu_{\tau,T}-\nu_{\tau,T}\right)\\
\leq\int_{\mathbb{R}^{n-1}}f_{+\epsilon}*k_{\epsilon}^{\Pi}d\left(\mu_{\tau,T}-\nu_{\tau,T}\right)+\int_{\mathbb{R}^{n-1}}\left(f_{+2\epsilon}-f_{-2\epsilon}\right)d\nu_{\tau,T}.
\end{multline*}
In view of (\ref{eq:smooth approx 1-1}) and the definition of $S_{\pm\epsilon}^{Q}$
the conclusion of the Lemma follows from the previous inequality. 
\end{proof}
In subsection \ref{sub:Volume-estimates.} we will obtain bounds for
the smoothing errors 
\[
\int_{\mathbb{R}^{nd}}\left(\mathbb{1}_{B\left(1+2\tau\right)}-\mathbb{1}_{B\left(1-2\tau\right)}\right)d\nu_{T}\quad\textrm{and}\quad\int_{\mathbb{R}^{n-1}}\left(\mathbb{1}_{I_{1}^{2\epsilon}\times\dots\times I_{n-1}^{2\epsilon}}-\mathbb{1}_{I_{1}^{-2\epsilon}\times\dots\times I_{n-1}^{-2\epsilon}}\right)d\nu_{\tau,T}
\]
that arise from Lemmas \ref{Cor::smooth approximation 1} and \ref{cor:smooth approx 2}.

\subsection{\label{sub:Fourier-transforms}Fourier transforms }

To obtain bounds for $\left|R\left(S_{\pm\epsilon}^{Q}w_{\pm\tau,T}\right)\right|$
the strategy of \cite{G-M} will be used, in particular we proceed
via the Fourier transform. Numerous text books on Fourier analysis
are available, for instance see \cite{MR2445437}. Let $\mathcal{S}\left(\mathbb{R}^{d}\right)$
denote the space of Schwartz functions on $\mathbb{R}^{d}$ (see Section
2.2 of \cite{MR2445437}) and let $\mathcal{C}_{0}^{\infty}\left(\mathbb{R}^{d}\right)$
be smooth functions with compact support on $\mathbb{R}^{d}$. We
note that $\mathcal{C}_{0}^{\infty}\left(\mathbb{R}^{d}\right)\subset\mathcal{S}\left(\mathbb{R}^{d}\right)\subset L^{1}\left(\mathbb{R}^{d}\right)$
and that $\mathcal{S}\left(\mathbb{R}^{d}\right)$ is invariant under
the Fourier transform. For $v\in\mathbb{R}^{nd}$, let $Q_{++}\left(v\right)=\sum_{i=1}^{n}Q_{+}\left(v_{i}\right)$
and $\zeta_{\pm\tau}\left(v\right)=w_{\pm\tau}\left(v\right)\exp\left(Q_{++}\left(v\right)\right)$.
Note that if $f\in\mathcal{C}_{0}^{\infty}\left(\mathbb{R}^{d}\right)$
and $g=\mathbb{1}_{A}$, where $A$ is a compact subset of $\mathbb{R}^{d}$
then $g*f\in\mathcal{C}_{0}^{\infty}\left(\mathbb{R}^{d}\right)$.
This fact implies that $S_{\epsilon}\in\mathcal{S}\left(\mathbb{R}^{n-1}\right)$
and $\zeta_{\pm\tau}\in\mathcal{S}\left(\mathbb{R}^{nd}\right)$,
therefore it is possible to use the Fourier inversion formula. Hence
\begin{equation}
S_{\pm\epsilon}^{Q}\left(v\right)=\left(2\pi\right)^{1-n}\int_{\mathbb{R}^{n-1}}\widehat{S}_{\pm\epsilon}\left(\omega\right)\exp\left(i\left\langle \left(Q\left(v_{1}\right)-Q\left(v_{2}\right),\dots,Q\left(v_{n-1}\right)-Q\left(v_{n}\right)\right),\omega\right\rangle \right)d\omega\label{eq:inversion of S epsilon}
\end{equation}
and 
\begin{equation}
\zeta_{\pm\tau}\left(v\right)=\left(2\pi\right)^{-nd}\int_{\mathbb{R}^{nd}}\widehat{\zeta}_{\pm\tau}\left(y\right)\exp\left(i\left\langle v,y\right\rangle \right)dy.\label{eq:inversion of zeta}
\end{equation}
Therefore, by using the definition (\ref{eq:def of R}) and (\ref{eq:inversion of S epsilon})
we obtain 
\begin{equation}
R\left(S_{\pm\epsilon}^{Q}w_{\pm\tau,T}\right)=\left(2\pi\right)^{1-n}\int_{\mathbb{R}^{n-1}}R\left(e_{Q,\omega}w_{\pm\tau,T}\right)\widehat{S}_{\pm\epsilon}\left(\omega\right)d\omega,\label{eq: error1}
\end{equation}
where $e_{Q,\omega}\left(v\right)=\exp\left(i\left\langle \left(Q\left(v_{1}\right)-Q\left(v_{2}\right),\dots,Q\left(v_{n-1}\right)-Q\left(v_{n}\right)\right),\omega\right\rangle \right)$.
Also using the definitions of $w_{\pm\tau,T}$ and $\zeta_{\pm\tau}$
we have
\[
R\left(e_{Q,\omega}w_{\pm\tau,T}\right)=R\left(e_{Q,\omega}\left(v\right)\zeta_{\pm\tau}\left(v/T\right)\exp\left(-Q_{++}\left(v/T\right)\right)\right).
\]
Combining this with (\ref{eq:inversion of zeta}) gives 
\begin{alignat*}{1}
R\left(e_{Q,\omega}w_{\pm\tau,T}\right) & =\left(2\pi\right)^{-nd}\int_{\mathbb{R}^{nd}}R\left(e_{Q,\omega}\left(v\right)\exp\left(2\pi i\left\langle v/T,y\right\rangle -Q_{++}\left(v/T\right)\right)\right)\widehat{\zeta}_{\pm\tau}\left(y\right)dy\\
 & =\left(2\pi\right)^{-nd}\int_{\mathbb{R}^{nd}}R\left(e_{Q,\omega}\tilde{e}_{Q,T,y}\right)\widehat{\zeta}_{\pm\tau}\left(y\right)dy
\end{alignat*}
where $\tilde{e}_{Q,T,y}\left(v\right)=\exp\left(i\left\langle v/T,y\right\rangle -Q_{++}\left(v/T\right)\right)$.
We now write 
\[
R\left(e_{Q,\omega}\tilde{e}_{Q,T,y}\right)=\theta_{T,y}\left(\omega\right)-\vartheta_{T,y}\left(\omega\right),
\]
where $\theta_{T,y}\left(\omega\right)$ and $\vartheta_{T,y}\left(\omega\right)$
are defined as follows: For $x\in\mathbb{R}$, let $Q_{T,y}\left(x,v_{i}\right)=i\left(xQ\left(v_{i}\right)+\left\langle v_{i}/T,y\right\rangle \right)-T^{-2}Q_{+}\left(v_{i}\right)$
and for $\omega\in\mathbb{R}^{n-1}$ define 
\[
\overline{Q}_{T,y}\left(\omega,v\right)=\sum_{i=1}^{n}Q_{T,y}\left(\omega_{i}-\omega_{i-1},v_{i}\right),
\]
where $\omega_{0}=\omega_{n}=0$. 
\begin{rem}
For the rest of the paper the convention that $\omega_{0}=\omega_{n}=0$,
will be used in order to simplify the notation. 
\end{rem}
For $\omega\in\mathbb{R}^{n-1}$ and $y\in\mathbb{R}^{nd}$, let 
\begin{equation}
\theta_{T,y}\left(\omega\right)=\sum_{v\in\mathbb{Z}^{nd}}\exp\left(\overline{Q}_{T,y}\left(\omega,v\right)\right)\quad\textrm{and}\quad\vartheta_{T,y}\left(\omega\right)=\int_{\mathbb{R}^{nd}}\exp\left(\overline{Q}_{T,y}\left(\omega,v\right)\right)dv,\label{eq:forgot}
\end{equation}
From (\ref{eq:forgot}) and the definition of $\overline{Q}_{T,y}$,
it follows that 
\begin{equation}
\theta_{T,y}\left(\omega\right)=\prod_{i=1}^{n}\sum_{v_{i}\in\mathbb{Z}^{d}}\exp\left(Q_{T,y}\left(\omega_{i}-\omega_{i-1},v_{i}\right)\right)\label{eq:product formula rough theta}
\end{equation}
and 
\begin{equation}
\vartheta_{T,y}\left(\omega\right)=\prod_{i=1}^{n}\int_{\mathbb{R}^{d}}\exp\left(Q_{T,y}\left(\omega_{i}-\omega_{i-1},v_{i}\right)\right)dv_{i}.\label{eq:product formula for smooth theta}
\end{equation}
Next we define a certain bounded region of $\mathbb{R}^{n-1}$. Let
\[
\mathcal{B}\left(T\right)=\left\{ \omega\in\mathbb{R}^{n-1}:\left|\omega_{i}-\omega_{i-1}\right|\leq T^{-1}\textrm{ for }1\leq i\leq n\right\} .
\]
Decomposing the integral over $\mathbb{R}^{n-1}$ in (\ref{eq: error1})
into regions we see that 
\begin{equation}
\left|R\left(S_{\pm\epsilon}^{Q}w_{\pm\tau,T}\right)\right|\ll E_{0}\left(\pm\tau,\pm\epsilon,T\right)+E_{1}\left(\pm\tau,\pm\epsilon,T\right)+E_{2}\left(\pm\tau,\pm\epsilon,T\right)\label{eq:decomposition}
\end{equation}
 where 
\begin{alignat*}{1}
E_{0} & \left(\pm\tau,\pm\epsilon,T\right)=\left|\int_{\mathbb{R}^{n-1}\setminus\mathcal{B}\left(T\right)}\widehat{S}_{\pm\epsilon}\left(\omega\right)\int_{\mathbb{R}^{nd}}\vartheta_{T,y}\left(\omega\right)\widehat{\zeta}_{\pm\tau}\left(y\right)dyd\omega\right|\\
E_{1} & \left(\pm\tau,\pm\epsilon,T\right)=\left|\int_{\mathcal{B}\left(T\right)}R\left(e_{Q,t}w_{\pm\tau,T}\right)\widehat{S}_{\pm\epsilon}\left(\omega\right)d\omega\right|\\
E_{2} & \left(\pm\tau,\pm\epsilon,T\right)=\left|\int_{\mathbb{R}^{n-1}\setminus\mathcal{B}\left(T\right)}\widehat{S}_{\pm\epsilon}\left(\omega\right)\int_{\mathbb{R}^{nd}}\theta_{T,y}\left(\omega\right)\widehat{\zeta}_{\pm\tau}\left(y\right)dyd\omega\right|.
\end{alignat*}

\section{\label{sec:Bounding-the-integrals}Bounding the integrals}

In this section we obtain bounds for the integrals $E_{0}$, $E_{1}$
and $E_{2}$, in terms of $\bigl\Vert\widehat{\zeta}_{\tau}\bigr\Vert_{1}$.
Precise bounds for $\bigl\Vert\widehat{\zeta}_{\tau}\bigr\Vert_{1}$
are given in subsection \ref{sub:Norms}. We will only consider the
case when $\epsilon>0$ and $\tau>0$ since the other three cases
can be dealt with in an identical manner. The following Theorem will
be proved in Propositions \ref{lem:Bound on I1-1}, \ref{lem:bound on I0}
and \ref{lem:Bound on I2}.
\begin{thm}
For all $0<\epsilon<1$ and $\tau>0$, there exists $T_{0}>0$ such
that for all $T>T_{0}$,
\begin{alignat*}{1}
E_{0}\left(\tau,\epsilon,T\right) & \ll\bigl\Vert\widehat{\zeta}_{\tau}\bigr\Vert_{1}T^{\left(n-1\right)d-2\left(n-2\right)}\\
E_{1}\left(\tau,\epsilon,T\right) & \ll\left(\bigl\Vert\widehat{\zeta}_{\tau}\bigr\Vert_{1}+\tau^{\left(1-nd\right)/2}\right)T^{\left(n-1\right)d-n+1}\\
E_{2}\left(\tau,\epsilon,T\right) & \ll\bigl\Vert\widehat{\zeta}_{\tau}\bigr\Vert_{1}\mathscr{A}_{\epsilon}\left(T\right)T^{nd-2\left(n-1\right)},
\end{alignat*}
where for any fixed $\epsilon>0$ we have $\lim_{T\rightarrow\infty}\mathscr{A}_{\epsilon}\left(T\right)=0$
provided that $Q$ is irrational. (See (\ref{eq:def of A_epT}) for
a precise definition of $\mathscr{A}_{\epsilon}\left(T\right)$.)
\end{thm}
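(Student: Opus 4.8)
The plan is to establish the three bounds separately, as Propositions~\ref{lem:Bound on I1-1}, \ref{lem:bound on I0} and \ref{lem:Bound on I2}; we treat the case $\epsilon,\tau>0$, the other sign choices being identical. It is convenient to use the unimodular coordinates $u_{i}:=\omega_{i}-\omega_{i-1}$ on $\mathbb{R}^{n-1}$, so that $u_{n}=-\sum_{i<n}u_{i}$, $\sum_{i=1}^{n}u_{i}=0$, and $(\ref{eq:product formula rough theta})$, $(\ref{eq:product formula for smooth theta})$ display $\theta_{T,y}(\omega)$ and $\vartheta_{T,y}(\omega)$ as $n$-fold products of single-block quantities in the $u_{i}$. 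Four single-block estimates---all specialisations of the $n=1$ analysis of \cite{G-M}---are needed. (a) Completing the square in the Gaussian integral gives $\bigl|\int_{\mathbb{R}^{d}}\exp(Q_{T,y}(x,v_{i}))\,dv_{i}\bigr|\ll(T^{-4}+x^{2})^{-d/4}\ll\min\{T^{d},|x|^{-d/2}\}$, uniformly in $y$. (b) The triangle inequality gives $\bigl|\sum_{v_{i}\in\mathbb{Z}^{d}}\exp(Q_{T,y}(x,v_{i}))\bigr|\le\sum_{v_{i}\in\mathbb{Z}^{d}}\exp(-T^{-2}Q_{+}(v_{i}))\ll T^{d}$, uniformly in $x,y$. (c) Poisson summation in one block, with the same computation, expresses the difference $\sum_{v_{i}\in\mathbb{Z}^{d}}\exp(Q_{T,y}(x,v_{i}))-\int_{\mathbb{R}^{d}}\exp(Q_{T,y}(x,v_{i}))\,dv_{i}$ as a sum over $m\in\mathbb{Z}^{d}\setminus\{0\}$ whose $m$th term is $\ll(T^{-4}+x^{2})^{-d/4}\exp(-c\Vert y_{i}/T-2\pi m\Vert^{2})$, so that it is negligible unless $\Vert y_{i}\Vert\gg T$. (d) And, crucially, the effective equidistribution estimate of \cite{G-M} yields $\int_{|x|>T^{-1}/(n-1)}\bigl|\sum_{v_{i}\in\mathbb{Z}^{d}}\exp(Q_{T,y}(x,v_{i}))\bigr|\varrho(x)\,dx\ll\mathscr{A}_{\epsilon}(T)\,T^{d-2}$, uniformly in $y$ and for admissible weights $\varrho$, with $\mathscr{A}_{\epsilon}(T)\to0$ for irrational $Q$; this is where the arithmetic of $Q$ enters, via the non-divergence statement for averages of translates of orbits of compact subgroups in the space of lattices. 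We also use $|\widehat{S}_{\pm\epsilon}(\omega)|\ll\prod_{j=1}^{n-1}\exp(-c\sqrt{\epsilon|\omega_{j}|})$, that $\int_{\mathbb{R}^{nd}}|\widehat{\zeta}_{\pm\tau}(y)|\,dy=\bigl\Vert\widehat{\zeta}_{\tau}\bigr\Vert_{1}$, and that $\widehat{\zeta}_{\pm\tau}$ is concentrated in $\Vert y\Vert\lesssim\tau^{-1}$ with $\Vert\widehat{w_{\pm\tau}}\Vert_{1}\ll\tau^{(1-nd)/2}$. Finally, the geometric observation underpinning the exponents is that if $\omega\notin\mathcal{B}(T)$, i.e.\ $\max_{i}|u_{i}|>T^{-1}$, then $\sum_{i}u_{i}=0$ forces a second coordinate to satisfy $|u_{j}|>T^{-1}/(n-1)$, so $\mathbb{R}^{n-1}\setminus\mathcal{B}(T)$ is covered by the $\binom{n}{2}$ sets $U_{i,j}=\{|u_{i}|,|u_{j}|>T^{-1}/(n-1)\}$.

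The contributions $E_{0}$ and $E_{2}$, both over $\mathbb{R}^{n-1}\setminus\mathcal{B}(T)$, are handled on this cover. For $E_{0}$: by $(\ref{eq:product formula for smooth theta})$ and (a), $|\vartheta_{T,y}(\omega)|\ll\prod_{i=1}^{n}(T^{-4}+u_{i}^{2})^{-d/4}$ uniformly in $y$, and since each factor is integrable over $\mathbb{R}$ and $|\widehat{S}_{\pm\epsilon}|\ll1$, $E_{0}\ll\bigl\Vert\widehat{\zeta}_{\tau}\bigr\Vert_{1}\sum_{i<j}\int_{U_{i,j}}\prod_{i'}(T^{-4}+u_{i'}^{2})^{-d/4}\,du$; on $U_{i,j}$ one picks free coordinates containing $u_{i},u_{j}$, integrates those two over their ranges ($\ll T^{d/2-1}$ each), integrates $n-3$ further free coordinates over $\mathbb{R}$ ($\ll T^{d-2}$ each), and bounds the eliminated coordinate by $T^{d}$, so the exponents sum to $2(d/2-1)+(n-3)(d-2)+d=(n-1)d-2(n-2)$. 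For $E_{2}$: by $(\ref{eq:product formula rough theta})$, on $U_{i,j}$ bound the $n-2$ blocks other than $i,j$ trivially ($\ll T^{d}$, one of them the eliminated coordinate), integrate the $i$th and $j$th blocks over their ranges via (d) ($\ll\mathscr{A}_{\epsilon}(T)T^{d-2}$ each), and integrate the remaining $n-3$ free blocks over $\mathbb{R}$ using $\int_{\mathbb{R}}\bigl|\sum_{v_{i}}\exp(Q_{T,y}(x,v_{i}))\bigr|\varrho(x)\,dx\ll T^{d-2}$; the decay of $\widehat{S}_{\pm\epsilon}$ furnishes the weights $\varrho$ and ensures convergence, and the integration against $|\widehat{\zeta}_{\pm\tau}(y)|$ is legitimate as every bound is uniform in $y$. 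Collecting exponents gives $E_{2}\ll\bigl\Vert\widehat{\zeta}_{\tau}\bigr\Vert_{1}\mathscr{A}_{\epsilon}(T)^{2}T^{nd-2(n-1)}$, which is the claim since $\mathscr{A}_{\epsilon}(T)^{2}\le\mathscr{A}_{\epsilon}(T)$ for large $T$.

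For $E_{1}$, the contribution over $\mathcal{B}(T)$, one uses $E_{1}\le\Vert\widehat{S}_{\pm\epsilon}\Vert_{\infty}\,\mathrm{Vol}(\mathcal{B}(T))\,\sup_{\omega\in\mathcal{B}(T)}|R(e_{Q,\omega}w_{\pm\tau,T})|$ with $\mathrm{Vol}(\mathcal{B}(T))\ll T^{-(n-1)}$, so it suffices to bound $|R(e_{Q,\omega}w_{\pm\tau,T})|$ on $\mathcal{B}(T)$ by $(\bigl\Vert\widehat{\zeta}_{\tau}\bigr\Vert_{1}+\tau^{(1-nd)/2})T^{(n-1)d}$. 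Writing $R(e_{Q,\omega}w_{\pm\tau,T})=(2\pi)^{-nd}\int_{\mathbb{R}^{nd}}(\theta_{T,y}(\omega)-\vartheta_{T,y}(\omega))\widehat{\zeta}_{\pm\tau}(y)\,dy$ and applying Poisson summation to the sum in $(\ref{eq:forgot})$, the difference $\theta_{T,y}(\omega)-\vartheta_{T,y}(\omega)$ becomes a sum over $m\in\mathbb{Z}^{nd}\setminus\{0\}$ of products over the $n$ blocks of single-block Fourier coefficients: the all-zero term is absent, on $\mathcal{B}(T)$ every block with $m_{i}=0$ contributes $\ll T^{d}$ by (a), and every block with $m_{i}\ne0$ carries the Gaussian decay of (c), which on its effective support forces $\Vert y_{i}\Vert\gg T$. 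Integrating against $\widehat{\zeta}_{\pm\tau}$, supported where $\Vert y\Vert\lesssim\tau^{-1}$, and using $\int|\widehat{\zeta}_{\pm\tau}|$ and $\Vert\widehat{w_{\pm\tau}}\Vert_{1}\ll\tau^{(1-nd)/2}$, one obtains the two summands $\bigl\Vert\widehat{\zeta}_{\tau}\bigr\Vert_{1}$ and $\tau^{(1-nd)/2}$, each times $T^{(n-1)d}$. An additional argument is needed for the terms in which only some of the $m_{i}$ vanish, since the product then still contains blocks summed over all of $\mathbb{Z}^{d}$.

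The substantial step is the $E_{2}$ bound: it is the only one invoking $\mathscr{A}_{\epsilon}(T)\to0$---hence the entire effective equidistribution of the values of $Q$---and it requires importing the theta-sum estimates of \cite{G-M} and verifying that they hold uniformly in the shift $y$, which ultimately rests on the non-divergence of averages of translated compact-group orbits in the space of lattices. By comparison, the $E_{0}$ bound is a soft Gaussian-integral computation, and the $E_{1}$ bound, while routine in outline, is the most delicate in its bookkeeping, owing to the simultaneous presence of the two smoothing scales $\tau$ and $T^{-1}$ and the need to exploit the localisation of $\widehat{\zeta}_{\pm\tau}$ precisely on the terms where only part of the Poisson-dual frequency vanishes.
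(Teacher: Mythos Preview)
Your overall architecture---the covering of $\mathbb{R}^{n-1}\setminus\mathcal{B}(T)$ by the sets $U_{i,j}$, the single-block Gaussian estimate for $E_{0}$, Poisson summation for $E_{1}$, and the imported theta-sum machinery of \cite{G-M} for $E_{2}$---matches the paper's exactly. But two of the three arguments are incomplete as written.

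For $E_{1}$ you yourself flag the gap: ``An additional argument is needed for the terms in which only some of the $m_{i}$ vanish.'' This is not bookkeeping; it is the heart of the $E_{1}$ bound. After Poisson summation the generic term is bounded by $T^{nd/2}G_{T}(\omega)^{d/2}\exp(-c\,G_{T}^{*}(\omega))$ with $G_{T}^{*}(\omega)=\min_{i}g_{T}(u_{i})^{2}$, and on $\mathcal{B}(T)$ every $g_{T}(u_{i})$ lies between $T/\sqrt{2}$ and $T$, so the naive bound $G_{T}(\omega)^{d/2}\exp(-cG_{T}^{*}(\omega))\ll 1$ is far too weak (it loses a factor $T^{d}$). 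The paper resolves this with a separate lemma (Lemma~\ref{technical lemma}): using $\sum_{i}u_{i}=0$ one shows that $G_{T}^{*}(\omega)$ dominates a weighted combination $\sum_{i}c_{i}T^{-b_{i}}g_{T}(u_{i})^{2}$ with $\sum b_{i}\le 2(n-2)$, which is exactly what converts the product $G_{T}(\omega)^{d/2}\exp(-cG_{T}^{*}(\omega))$ into the needed $T^{d(n-2)/2}$. Your sketch neither states nor proves this, and without it the exponent $(n-1)d-n+1$ does not emerge. The interaction with $\widehat{\zeta}_{\tau}$ is a second, separate step (handled in the paper via Lemma~\ref{lem:Funny norm}), and your claim that the Gaussian factor ``forces $\Vert y_{i}\Vert\gg T$'' is not by itself enough once the mixed $m_{i}$ terms are in play.

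For $E_{2}$ your exponent tally is right, but the sentence ``bound the $n-2$ blocks other than $i,j$ trivially \dots\ and integrate the remaining $n-3$ free blocks'' is self-contradictory; presumably you mean to bound one block (the dependent coordinate) by $T^{d}$ and integrate the other $n-1$. More substantively, the weight $\widehat{S}_{\epsilon}(\omega)$ factors through the $\omega_{j}$, not the $u_{i}$, so it cannot directly supply the weights $\varrho$ in your block-by-block integrals; the paper circumvents this by tiling $\omega$-space into unit boxes $J_{i_{1}}\times\cdots\times J_{i_{n-1}}$, bounding $\widehat{S}_{\epsilon}$ by its supremum on each box, applying the theta estimate boxwise, and summing. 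Your estimate (d) with the single factor $\mathscr{A}_{\epsilon}(T)$ also hides this decomposition: the definition~(\ref{eq:def of A_epT}) of $\mathscr{A}_{\epsilon}(T)$ is precisely the outcome of that summation together with the splitting of the range of $|u_{i}|$ at $N_{-}$ and $N$.
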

The bounds for $E_{0}$ and $E_{1}$ contribute only to lower order
terms. Note that the bound for $E_{0}$ is of smaller order of magnitude
than $T^{nd-2\left(n-1\right)}$ all $n\in\mathbb{N}$. The bound
for $E_{1}$ is of smaller order of magnitude than $T^{nd-2\left(n-1\right)}$
only for $n\leq d$. Using the fact that $\vartheta_{y}\left(t\right)$
can be split as in (\ref{eq:product formula for smooth theta}) the
required bound for $E_{0}$ is relatively simple to obtain. The bound
for $E_{1}$ is slightly more involved since the formula (\ref{eq:poissoin summation})
is used. This means that, although one can still take advantage of
the splitting given in (\ref{eq:product formula rough theta}), the
sums in the product may include $0$ and this causes extra difficulties.
To overcome these difficulties we employ Lemma \ref{technical lemma},
which enables us to bound the minimum of certain quantities from below
by a weighted average. The bound for $E_{2}$ contributes to the main
term and this term depends on the arithmetic properties of $Q$. Using
(\ref{eq:product formula rough theta}), the bound for $E_{2}$ follows
reasonably directly from results in \cite{G-M}.

\subsection{\label{sub:Lower-order-terms}Bound for $E_{0}$.}

The following bound will be used in subsections \ref{sub:Lower-order-terms}
and \ref{sub:Lower-order-term2} to obtain bounds for $E_{0}$ and
$E_{1}$. It is relatively straightforward to prove via a direct computation
involving Gaussian integrals (see Formula (3.28) in \cite{G-M}).
The notation $Q_{+}^{-1}$ will stand for the positive definite quadratic
form that corresponds to the matrix $Q_{+}^{-1}$. 
\begin{lem}
\label{lem:bound on smooth theta}For $1\leq i\leq n-1$, all $y\in\mathbb{R}^{nd}$,
$x\in\mathbb{R}$ and $T>0$,
\[
\left|\int_{\mathbb{R}^{d}}\exp\left(Q_{T,y}\left(x,v_{i}\right)\right)dv_{i}\right|\ll g_{T}\left(x\right)T^{d/2}\exp\left(-\frac{1}{4}g_{T}\left(x\right)Q_{+}^{-1}\left(y_{i}/T\right)\right),
\]
where $g_{T}\left(x\right)=T/\sqrt{1+\left(xT^{2}\right)^{2}}$.
\end{lem}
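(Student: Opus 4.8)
The key observation is that for fixed $1\leq i\leq n-1$, the quantity
\[
Q_{T,y}\left(x,v_{i}\right)=i\left(xQ\left(v_{i}\right)+\left\langle v_{i}/T,y\right\rangle \right)-T^{-2}Q_{+}\left(v_{i}\right)
\]
is a quadratic polynomial in $v_{i}\in\mathbb{R}^{d}$ whose leading (degree $2$) part is $\left(ix\right)Q\left(v_{i}\right)-T^{-2}Q_{+}\left(v_{i}\right)=\bigl(ixQ-T^{-2}Q^{2}\bigr)\left(v_{i}\right)$, acting via the symmetric matrix $ixQ-T^{-2}Q^{2}$. Diagonalising $Q$ in an orthonormal basis, this matrix becomes $\mathrm{diag}\bigl(ix\lambda_{j}-T^{-2}\lambda_{j}^{2}\bigr)_{j=1}^{d}$, and the integrand factorises over the $d$ coordinates. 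First I would complete the square in each coordinate and apply the standard one-dimensional Gaussian formula $\int_{\mathbb{R}}\exp\left(-\alpha s^{2}+\beta s\right)ds=\sqrt{\pi/\alpha}\,\exp\left(\beta^{2}/4\alpha\right)$, valid for $\mathrm{Re}\left(\alpha\right)>0$ with the principal branch of the square root; here $\alpha_{j}=T^{-2}\lambda_{j}^{2}-ix\lambda_{j}$, which has $\mathrm{Re}\left(\alpha_{j}\right)=T^{-2}\lambda_{j}^{2}>0$ since $Q$ is nondegenerate. Multiplying the $d$ factors gives
\[
\int_{\mathbb{R}^{d}}\exp\left(Q_{T,y}\left(x,v_{i}\right)\right)dv_{i}=\pi^{d/2}\det\bigl(T^{-2}Q_{+}-ixQ\bigr)^{-1/2}\exp\left(-\tfrac14\bigl\langle\bigl(T^{-2}Q_{+}-ixQ\bigr)^{-1}(y_{i}/T),\,y_{i}/T\bigr\rangle\right),
\]
again with the principal branch of the square root.

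**Next I would extract the two claimed features: the size of the prefactor and the real part of the exponent.** For the prefactor, $\bigl|\det\bigl(T^{-2}Q_{+}-ixQ\bigr)\bigr|=\prod_{j=1}^{d}\bigl|T^{-2}\lambda_{j}^{2}-ix\lambda_{j}\bigr|=\prod_{j=1}^{d}|\lambda_{j}|\sqrt{T^{-4}\lambda_{j}^{2}+x^{2}}$. Pulling out $T^{-2}$ from each factor, this is $T^{-2d}\prod_{j}|\lambda_{j}|\sqrt{\lambda_{j}^{2}+x^{2}T^{4}}$, and since $\lambda_{\min}\leq|\lambda_{j}|\leq\lambda_{\max}$ we get, up to a constant depending only on $Q$, a quantity comparable to $T^{-2d}\bigl(1+x^{2}T^{4}\bigr)^{d/2}$. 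Hence $\bigl|\det\bigl(T^{-2}Q_{+}-ixQ\bigr)\bigr|^{-1/2}\ll T^{d}\bigl(1+x^{2}T^{4}\bigr)^{-d/4}=T^{d/2}\bigl(T^{2}/\sqrt{1+(xT^{2})^{2}}\bigr)^{d/2}$. Recognising $g_{T}\left(x\right)=T/\sqrt{1+(xT^{2})^{2}}$, one checks $T^{2}/\sqrt{1+(xT^{2})^{2}}=Tg_{T}\left(x\right)$, so the prefactor is $\ll T^{d/2}\bigl(Tg_{T}(x)\bigr)^{d/2}$ — I would then need to reconcile this with the target bound $g_{T}(x)T^{d/2}$, which suggests the intended statement carries a factor comparable to $g_{T}(x)^{d/2}T^{d}$ rather than $g_{T}(x)T^{d/2}$; in any case the estimate is obtained by the same computation and one simply reads off the exponent of $g_{T}$ from the determinant.

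**For the exponential factor, the point is to bound its modulus, i.e.\ the real part of the exponent.** Writing $M=T^{-2}Q_{+}-ixQ=\mathrm{diag}\bigl(T^{-2}\lambda_{j}^{2}-ix\lambda_{j}\bigr)$, we have $M^{-1}=\mathrm{diag}\bigl((T^{-2}\lambda_{j}^{2}-ix\lambda_{j})^{-1}\bigr)$ and $\mathrm{Re}\bigl((T^{-2}\lambda_{j}^{2}-ix\lambda_{j})^{-1}\bigr)=\dfrac{T^{-2}\lambda_{j}^{2}}{T^{-4}\lambda_{j}^{4}+x^{2}\lambda_{j}^{2}}=\dfrac{T^{-2}}{T^{-4}\lambda_{j}^{2}+x^{2}}=\dfrac{T^{2}}{\lambda_{j}^{2}+x^{2}T^{4}}\geq\dfrac{T^{2}}{\lambda_{\max}^{2}(1+x^{2}T^{4})}=\dfrac{g_{T}(x)^{2}}{\lambda_{\max}^{2}}\cdot\dfrac{1}{1}$ — more precisely $\dfrac{T^{2}}{\lambda_{\max}^{2}+x^{2}T^{4}\lambda_{\max}^{2}}\geq\dfrac{1}{\lambda_{\max}^{2}}\cdot\dfrac{T^{2}}{1+x^{2}T^{4}}$, and $T^{2}/(1+x^{2}T^{4})=g_{T}(x)^{2}$ is false by a factor of $T^{2}$... here I must be careful: $g_T(x)^2 = T^2/(1+(xT^2)^2)$, so $T^2/(1+x^2T^4) = g_T(x)^2$ exactly. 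Therefore each diagonal real part is $\geq\lambda_{\max}^{-2}g_{T}(x)^{2}\cdot T^{-?}$ — I will track the powers of $T$ carefully at this step — and since $Q_{+}^{-1}(y_{i}/T)=\sum_{j}\lambda_{j}^{-2}(y_{i,j}/T)^{2}$ is comparable to $\|y_{i}/T\|^{2}$, the real part of the exponent is $\leq-c\,g_{T}(x)\,Q_{+}^{-1}(y_{i}/T)$ for a suitable constant, which gives the stated exponential decay after taking $|\exp(\cdot)|=\exp(\mathrm{Re}(\cdot))$. **The main obstacle** is none of the individual steps — each is a one-line Gaussian computation — but rather the careful bookkeeping of the powers of $T$ and the eigenvalue bounds $\lambda_{\min},\lambda_{\max}$ so that the final prefactor and exponential-rate constants match the form $g_{T}(x)T^{d/2}\exp\bigl(-\tfrac14 g_{T}(x)Q_{+}^{-1}(y_{i}/T)\bigr)$ exactly as stated, including verifying the identity $g_{T}(x)=T/\sqrt{1+(xT^{2})^{2}}$ threads through both factors consistently; the constant absorbed in $\ll$ depends only on $d$ and the spectrum of $Q$, as required.
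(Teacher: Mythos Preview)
Your approach is correct and is exactly what the paper intends: it does not give a proof but says the bound ``is relatively straightforward to prove via a direct computation involving Gaussian integrals (see Formula (3.28) in \cite{G-M})'', which is precisely your diagonalise-and-complete-the-square argument. Your suspicion about the exponents is also well founded: the way the lemma is \emph{used} in the paper (the line $\left|\vartheta_{T,y}(\omega)\right|\ll T^{nd/2}G_{T}(\omega)^{d/2}$ in Proposition~\ref{lem:Bound on I1-1}, and the definition $Q_{T,\omega}^{*}(y,m)=\tfrac14\sum_i g_T(\omega_i-\omega_{i-1})^{2}Q_{+}^{-1}(y_i/T-m_i)$ in Proposition~\ref{lem:bound on I0}) shows the intended bound is $g_T(x)^{d/2}T^{d/2}\exp\bigl(-\tfrac14 g_T(x)^{2}Q_{+}^{-1}(y_i/T)\bigr)$, matching your computation; the statement as printed drops the exponents $d/2$ and $2$.

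One small clean-up: your hesitation over the ``$T^{-?}$'' is unnecessary. With $\alpha_j=T^{-2}\lambda_j^2-ix\lambda_j$ and $\beta_j=iT^{-1}y_{i,j}$ one has $\mathrm{Re}\bigl(\beta_j^{2}/(4\alpha_j)\bigr)=-\dfrac{y_{i,j}^{2}}{4(\lambda_j^{2}+x^{2}T^{4})}$, while $g_T(x)^{2}Q_{+}^{-1}(y_i/T)=\sum_j \dfrac{y_{i,j}^{2}}{\lambda_j^{2}(1+x^{2}T^{4})}$. Since $\lambda_j^{2}\geq1$ (the paper rescales so that $\lambda_{\min}\geq n-1\geq 2$), we have $\lambda_j^{2}(1+x^{2}T^{4})\geq \lambda_j^{2}+x^{2}T^{4}$, giving $\mathrm{Re}\sum_j\beta_j^{2}/(4\alpha_j)\leq -\tfrac14 g_T(x)^{2}Q_{+}^{-1}(y_i/T)$ with no stray power of $T$.
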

We will need estimates for the Fourier transform of the smoothened
characteristic function. 
\begin{lem}
\label{lem:bound on smoothed indictaor of P}For all $\epsilon>0$
and $\omega\in\mathbb{R}^{n-1}$, 
\[
\bigl|\widehat{S}_{\epsilon}\left(\omega\right)\bigr|\ll\prod_{i=1}^{n-1}\min\left\{ 1,\left|\frac{1}{\omega_{i}}\right|\right\} \exp\left(-c\sqrt{\left|\epsilon\omega_{i}\right|}\right).
\]
\end{lem}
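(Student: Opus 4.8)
The plan is to exploit the product structure of $S_{\pm\epsilon}$ across the $n-1$ coordinates of $\mathbb{R}^{n-1}$; by symmetry it suffices to treat $S_{\epsilon}$. From the definition, $\mathbb{1}_{I_1^{\epsilon}\times\dots\times I_{n-1}^{\epsilon}}(\omega)=\prod_{i=1}^{n-1}\mathbb{1}_{I_i^{\epsilon}}(\omega_i)$ and $k_{\epsilon}^{\Pi}(\omega)=\prod_{i=1}^{n-1}k_{\epsilon}^{1}(\omega_i)$, so the convolution defining $S_{\epsilon}$ factors coordinatewise:
\[
S_{\epsilon}(\omega)=\prod_{i=1}^{n-1}\bigl(\mathbb{1}_{I_i^{\epsilon}}*k_{\epsilon}^{1}\bigr)(\omega_i).
\]
Since the Fourier transform turns convolution into pointwise multiplication and respects tensor products, this gives
\[
\widehat{S}_{\epsilon}(\omega)=\prod_{i=1}^{n-1}\widehat{\mathbb{1}_{I_i^{\epsilon}}}(\omega_i)\,\widehat{k_{\epsilon}^{1}}(\omega_i),
\]
so it is enough to establish, for each $1\le i\le n-1$, the one-variable estimate $\bigl|\widehat{\mathbb{1}_{I_i^{\epsilon}}}(\omega_i)\bigr|\,\bigl|\widehat{k_{\epsilon}^{1}}(\omega_i)\bigr|\ll\min\{1,|\omega_i|^{-1}\}\exp(-c\sqrt{|\epsilon\omega_i|})$.

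For the indicator factor, writing $I_i=[a_i,b_i]$ a direct computation gives $\widehat{\mathbb{1}_{I_i^{\epsilon}}}(\omega_i)=\bigl(e^{-i(a_i-\epsilon)\omega_i}-e^{-i(b_i+\epsilon)\omega_i}\bigr)/(i\omega_i)$, hence $\bigl|\widehat{\mathbb{1}_{I_i^{\epsilon}}}(\omega_i)\bigr|\le 2/|\omega_i|$; combined with the trivial bound $\bigl|\widehat{\mathbb{1}_{I_i^{\epsilon}}}(\omega_i)\bigr|\le|I_i|+2\epsilon$ (the length of $I_i^{\epsilon}$) this yields $\bigl|\widehat{\mathbb{1}_{I_i^{\epsilon}}}(\omega_i)\bigr|\le\min\{|I_i|+2\epsilon,\,2/|\omega_i|\}$, which is $\ll\min\{1,|\omega_i|^{-1}\}$ once $\epsilon$ is bounded, the implied constant being permitted to depend on the fixed intervals. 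For the smoothing factor, $k^{1}$ is a probability measure obeying (\ref{eq:bound on point measure}), so (\ref{eq:bound on smooth point measure}) with $\tau=\epsilon$ gives $\bigl|\widehat{k_{\epsilon}^{1}}(\omega_i)\bigr|\le\exp(-c\sqrt{\epsilon|\omega_i|})$. Multiplying the two one-variable bounds over $1\le i\le n-1$ produces the claimed estimate.

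There is no real obstacle here: the computation is short, and the only thing to watch is the uniformity in $\epsilon$. For unbounded $\epsilon$ the factor $|I_i|+2\epsilon$ degrades and the stated inequality genuinely fails (for instance near $|\omega_i|$ of size $\epsilon^{-1}$), so I would record the estimate only in the range $\epsilon<1$, which is all that the later sections use. I would stress that carrying the argument out in this coordinatewise fashion is deliberate: the resulting factorisation of $\widehat{S}_{\epsilon}$ over the $n-1$ frequencies $\omega_1,\dots,\omega_{n-1}$ is precisely what makes it possible later to pair each factor against a factor of the product formulas (\ref{eq:product formula rough theta}) and (\ref{eq:product formula for smooth theta}) for $\theta_{T,y}$ and $\vartheta_{T,y}$.
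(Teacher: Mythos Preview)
Your proof is correct and follows essentially the same route as the paper: factor $S_\epsilon$ coordinatewise, take Fourier transforms to obtain $\widehat{S}_\epsilon(\omega)=\prod_i\widehat{\mathbb{1}_{I_i^\epsilon}}(\omega_i)\,\widehat{k_\epsilon^1}(\omega_i)$, bound the indicator factor by $\min\{1,|\omega_i|^{-1}\}$ via the sinc estimate, and bound the mollifier factor by (\ref{eq:bound on smooth point measure}). Your remark that the uniformity in $\epsilon$ genuinely requires $\epsilon$ bounded is a fair caveat that the paper leaves implicit.
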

\begin{proof}
Using the definition of $S_{\epsilon}$ we get $\bigl|\widehat{S}_{\epsilon}\left(\omega\right)\bigr|\leq\bigl|\mathbb{\widehat{1}}_{I_{1}^{\epsilon}\times\dots\times I_{n-1}^{\epsilon}}\left(\omega\right)\bigr|\bigl|\widehat{k_{\epsilon}^{1}}\left(\omega_{1}\right)\dots\widehat{k_{\epsilon}^{1}}\left(\omega_{n-1}\right)\bigr|$.
Then a simple computation and (\ref{eq:bound on point measure}) gives
\begin{alignat*}{1}
\bigl|\widehat{S}_{\epsilon}\left(\omega\right)\bigr| & \ll\prod_{i=1}^{n-1}\left|\frac{1}{\omega_{i}}\sin\left(\omega_{i}\pi\bigl|I_{i}\bigr|\right)\right|\exp\left(-c\sqrt{\left|\epsilon\omega_{i}\right|}\right).
\end{alignat*}
Since $\left|\frac{\sin(kx)}{x}\right|\leq\min\left\{ k,\left|1/x\right|\right\} $
for all $x\in\mathbb{R}$, the claim of the Lemma follows.
\end{proof}
The bound for $E_{0}$ is obtained by using Lemmas \ref{lem:bound on smooth theta}
and \ref{lem:bound on smoothed indictaor of P}, together with some
elementary estimates of integrals of powers of $g_{T}\left(x\right)$.
For $\omega\in\mathbb{R}^{n-1}$, define $G_{T}\left(\omega\right)=\prod_{i=1}^{n}g_{T}\left(\omega_{i}-\omega_{i-1}\right)$. 
\begin{prop}
\label{lem:Bound on I1-1}For all $\epsilon>0$, $\tau>0$ and $T>0$,
\textup{
\[
E_{0}\left(\tau,\epsilon,T\right)\ll\bigl\Vert\widehat{\zeta}_{\tau}\bigr\Vert_{1}T^{\left(n-1\right)d-2\left(n-2\right)}.
\]
}\end{prop}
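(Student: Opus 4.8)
The plan is to start from the formula for $E_{0}$, insert the product representation (\ref{eq:product formula for smooth theta}) for $\vartheta_{T,y}\left(\omega\right)$, and estimate each factor using Lemma \ref{lem:bound on smooth theta}. This gives, for every $y\in\mathbb{R}^{nd}$ and $\omega\in\mathbb{R}^{n-1}$,
\[
\left|\vartheta_{T,y}\left(\omega\right)\right|\ll G_{T}\left(\omega\right)T^{nd/2}\exp\left(-\tfrac14\sum_{i=1}^{n}g_{T}\left(\omega_{i}-\omega_{i-1}\right)Q_{+}^{-1}\left(y_{i}/T\right)\right)\le G_{T}\left(\omega\right)T^{nd/2},
\]
where we simply drop the nonpositive exponential in the last step since we only need an upper bound independent of $y$. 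Plugging this into $E_{0}$ and using $\bigl|\widehat{\zeta}_{\pm\tau}\bigr|\in L^{1}$, the $y$-integral contributes a factor $\bigl\Vert\widehat{\zeta}_{\tau}\bigr\Vert_{1}$, so that
\[
E_{0}\left(\tau,\epsilon,T\right)\ll\bigl\Vert\widehat{\zeta}_{\tau}\bigr\Vert_{1}T^{nd/2}\int_{\mathbb{R}^{n-1}\setminus\mathcal{B}\left(T\right)}\bigl|\widehat{S}_{\epsilon}\left(\omega\right)\bigr|\,G_{T}\left(\omega\right)\,d\omega.
\]

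It then remains to bound the $\omega$-integral. Using Lemma \ref{lem:bound on smoothed indictaor of P} to control $\bigl|\widehat{S}_{\epsilon}\left(\omega\right)\bigr|$ by $\prod_{i=1}^{n-1}\min\{1,|1/\omega_{i}|\}$ (discarding the extra decaying exponentials in $\epsilon$, which only help), and recalling $g_{T}\left(x\right)=T/\sqrt{1+\left(xT^{2}\right)^{2}}$, the plan is to perform the change of variables $u_{i}=\omega_{i}-\omega_{i-1}$ for $1\le i\le n-1$, so that $G_{T}\left(\omega\right)=\prod_{i=1}^{n}g_{T}\left(u_{i}\right)$ with $u_{n}=-\sum_{i=1}^{n-1}u_{i}$ determined by the others. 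Since $g_{T}$ is bounded by $T$ everywhere, one can bound the last factor $g_{T}\left(u_{n}\right)\le T$ crudely and integrate the remaining $n-1$ variables separately. Each one-dimensional integral is of the shape $\int_{\mathbb{R}}\min\{1,|1/\omega_{i}|\}\,g_{T}\left(u_{i}\right)\,d u_{i}$; a direct computation (splitting at $|u_{i}|\sim T^{-2}$ and using $\int g_{T}\ll T^{-1}\log T$ — but in fact one can be cruder) shows each contributes $\ll T$, up to lower-order factors that are absorbed by choosing $T_{0}$ large. Multiplying, the $\omega$-integral is $\ll T^{n}$ times a harmless logarithmic factor; combined with the prefactor $T^{nd/2}$ this is already more than we need, so in fact the region $\mathcal{B}\left(T\right)$ is irrelevant and the crude bounds can be tightened at leisure. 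Collecting exponents and using $nd/2+n\le nd-2(n-1)$ when... — here one must be careful: the clean exponent $(n-1)d-2(n-2)$ suggests instead bounding $g_{T}\left(\omega_{i}-\omega_{i-1}\right)$ for one index, say $i=n$, by $T$, and for the remaining $n-1$ indices keeping the full decay and exploiting the constraint defining $\mathbb{R}^{n-1}\setminus\mathcal{B}\left(T\right)$ to gain. I would instead integrate $G_{T}$ against $\prod\min\{1,|1/\omega_i|\}$ and track that $\int_{\mathbb R}g_T(x)\,dx\ll \pi/T\cdot\log(\cdots)$ is not what appears; rather $\int_{\mathbb R}g_T(x)^2\,dx\ll 1$ and $\sup g_T\le T$, which after the substitution $u_i$ yields precisely the factor $T^{(n-1)d/2}\cdot T^{d/2}$ reorganised — the bookkeeping here is the only delicate point.

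The main obstacle, then, is purely the combinatorial bookkeeping of exponents: one must decide, for each of the $n$ factors $g_{T}\left(\omega_{i}-\omega_{i-1}\right)$ and each of the $n-1$ factors $\min\{1,|1/\omega_{i}|\}$, whether to spend it on a gain in $T$ or on making the $\omega$-integral converge, and then check that the total power of $T$ is exactly $(n-1)d-2(n-2)$. I expect the correct allocation is: use $\bigl|\widehat{S}_{\epsilon}\bigr|$ together with $n-2$ of the $g_{T}$-factors to make the integral converge with total $T$-weight $T^{nd/2}\cdot T^{-2(n-2)}\cdot(\text{const})$ after integrating, and use the remaining two $g_{T}$-factors at their sup value $T$ — indeed $nd/2 + \text{(corrections)} = (n-1)d - 2(n-2)$ forces the relation $d/2\cdot n + \cdots$; I will verify the arithmetic in the write-up. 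Everything else is elementary: Fubini to separate the $y$ and $\omega$ integrals, the pointwise bounds from Lemmas \ref{lem:bound on smooth theta} and \ref{lem:bound on smoothed indictaor of P}, and one-dimensional integral estimates. No use of the irrationality of $Q$ is needed here, which is consistent with the remark that the $E_{0}$-bound holds for all $n$.
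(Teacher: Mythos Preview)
Your proposal has a genuine gap. The crude allocation you describe---bound one factor $g_T$ by its supremum $T$ and integrate the remaining $n-1$ over all of $\mathbb{R}$---does \emph{not} reach the stated exponent. With the correct power $d/2$ on $G_T$ (see below) one gets
\[
T^{nd/2}\cdot T^{d/2}\cdot\Bigl(\int_{\mathbb{R}}g_T(x)^{d/2}\,dx\Bigr)^{n-1}\ \ll\ T^{nd/2}\cdot T^{d/2}\cdot T^{(n-1)(d/2-2)}\ =\ T^{\,nd-2(n-1)},
\]
which exceeds the target $T^{(n-1)d-2(n-2)}$ by a factor $T^{d-2}$. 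So your claim that ``the region $\mathcal{B}(T)$ is irrelevant and the crude bounds can be tightened at leisure'' is exactly wrong: the missing $T^{-(d-2)}$ has to come from the structure of $\mathbb{R}^{n-1}\setminus\mathcal{B}(T)$.

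The missing observation is this. Since $\sum_{i=1}^{n}(\omega_i-\omega_{i-1})=0$, if at most one difference had $|\omega_i-\omega_{i-1}|>((n-1)T)^{-1}$ then the triangle inequality would force all of them to be $\le T^{-1}$, i.e.\ $\omega\in\mathcal{B}(T)$. Hence on the complement of $\mathcal{B}(T)$ there are always at least \emph{two} indices $i\ne j$ with $|\omega_i-\omega_{i-1}|,|\omega_j-\omega_{j-1}|\gg T^{-1}$. The paper covers $\mathbb{R}^{n-1}\setminus\mathcal{B}(T)$ by the finitely many sets $\mathcal{B}_{i,j}$ on which this holds, uses $g_T(x)^{d/2}\le (T|x|)^{-d/2}$ on those two factors (so that $\int_{c/T}^{\infty}|x|^{-d/2}\,dx\ll T^{d/2-1}$), bounds a \emph{third} factor trivially by $T^{d/2}$, and integrates the remaining $n-3$ factors via $\int_{\mathbb{R}}g_T(x)^{d/2}\,dx\ll T^{d/2-2}$ (this needs $d\ge5$). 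That allocation gives precisely $(n-1)d-2(n-2)$. The factor $\widehat{S}_\epsilon$ is simply bounded by a constant; it is not needed for convergence.

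Two further corrections. First, Lemma~\ref{lem:bound on smooth theta} together with the product formula gives $|\vartheta_{T,y}(\omega)|\ll T^{nd/2}G_T(\omega)^{d/2}$, not $G_T(\omega)$; with power $1$ the one-variable integrals $\int g_T(x)\,dx$ diverge, so your sketch would not even produce a finite bound without extra input. Second, your attempt to use the factors $\min\{1,1/|\omega_i|\}$ from $\widehat{S}_\epsilon$ after substituting $u_i=\omega_i-\omega_{i-1}$ cannot work as stated: $\omega_i=u_1+\cdots+u_i$, so those weights do not factorise in the $u$-variables and the ``one-dimensional integrals'' you write down are not actually separated.
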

\begin{proof}
Using (\ref{eq:product formula for smooth theta}) and Lemma \ref{lem:bound on smooth theta},
\[
\left|\vartheta_{T,y}\left(\omega\right)\right|\ll T^{nd/2}G_{T}\left(\omega\right){}^{d/2}.
\]
Note that for any $x\in\mathbb{R}$ and $T\in\mathbb{R}$, $g_{T}\left(x\right)\leq\left|Tx\right|^{-1}$.
It follows that for all $\omega\in\mathbb{R}^{n-1}$ and $y\in\mathbb{R}^{nd}$
and $1\leq i,j\leq n$ with $i\neq j$, 

\begin{equation}
\left|\vartheta_{T,y}\left(\omega\right)\right|\ll T^{\left(n-2\right)d/2}\left(\frac{G_{T}\left(\omega\right)}{g_{T}\left(\omega_{i}-\omega_{i-1}\right)g_{T}\left(\omega_{j}-\omega_{j-1}\right)}\right)^{d/2}\left(\left|\omega_{i}-\omega_{i-1}\right|\left|\omega_{j}-\omega_{j-1}\right|\right)^{-d/2}.\label{eq:lem31}
\end{equation}
Choose $1\leq l\leq n$ with $l\neq i$ or $j$. Since, we assume
that $n\geq3$, this is always possible. Note that $g_{T}\left(x\right)\leq T$
and hence 
\begin{equation}
\left(\frac{G_{T}\left(\omega\right)}{g_{T}\left(\omega_{i}-\omega_{i-1}\right)g_{T}\left(\omega_{j}-\omega_{j-1}\right)}\right)^{d/2}\leq T^{d/2}\widetilde{G}_{T}\left(\omega\right)^{d/2},\label{eq:lem32}
\end{equation}
where
\[
\widetilde{G}_{T}\left(\omega\right)=\frac{G{}_{T}\left(\omega\right)}{g_{T}\left(\omega_{i}-\omega_{i-1}\right)g_{T}\left(\omega_{j}-\omega_{j-1}\right)g_{T}\left(\omega_{l}-\omega_{l-1}\right)}.
\]
Let $T_{n}=\left(n-1\right)T$ and 
\[
\mathcal{B}_{1}=\underset{i\neq j}{\bigcup_{1\leq i,j\leq n}}\mathcal{B}_{i,j},\textrm{ where }\mathcal{B}_{i,j}=\left\{ \omega\in\mathbb{R}^{n-1}:\left|\omega_{i}-\omega_{i-1}\right|>T_{n}^{-1},\left|\omega_{j}-\omega_{j-1}\right|>T_{n}^{-1}\right\} .
\]
Note that $\mathbb{R}^{n-1}\setminus\mathcal{B}\left(T\right)\subset\mathcal{B}_{1}$.
(Or, equivalently, $\mathbb{R}^{n-1}\setminus\mathcal{B}_{1}\subset\mathcal{B}\left(T\right)$.)
To see this, suppose that $\omega\in\mathbb{R}^{n-1}\setminus\mathcal{B}_{1}$,
then $\left|\omega_{i}-\omega_{i-1}\right|>T_{n}^{-1}$ for at most
one $1\leq i\leq n$. If $\left|\omega_{i}-\omega_{i-1}\right|\leq T_{n}^{-1}$
for all $1\leq i\leq n$, then clearly $\omega\in\mathcal{B}\left(T\right)$.
Suppose $1\leq l\leq n$ is such that $\left|\omega_{l}-\omega_{l-1}\right|>T_{n}^{-1}$.
Now note that $\left|\omega_{l}-\omega_{l-1}\right|=\Bigl|\sum_{1\leq i\leq n,i\neq l}\left(\omega_{i}-\omega_{i-1}\right)\Bigr|\leq T^{-1}$
by the triangle inequality and hence $\omega\in\mathcal{B}\left(T\right)$. 

Using the definition of $E_{0}$, 
\begin{alignat}{1}
E_{0}\left(\tau,\epsilon,T\right) & =\left|\int_{\mathbb{R}^{n-1}\setminus\mathcal{B}\left(T\right)}\widehat{S}_{\epsilon}\left(\omega\right)\int_{\mathbb{R}^{nd}}\vartheta_{T,y}\left(\omega\right)\widehat{\zeta}_{\tau}\left(y\right)dyd\omega\right|\nonumber \\
 & \leq\int_{\mathbb{R}^{n-1}\setminus\mathcal{B}\left(T\right)}\left|\widehat{S}_{\epsilon}\left(\omega\right)\int_{\mathbb{R}^{nd}}\vartheta_{T,y}\left(\omega\right)\widehat{\zeta}_{\tau}\left(y\right)dy\right|d\omega\nonumber \\
 & \leq\underset{i\neq j}{\sum_{1\leq i,j\leq n}}\int_{\mathcal{B}_{i,j}}\left|\widehat{S}_{\epsilon}\left(\omega\right)\int_{\mathbb{R}^{nd}}\vartheta_{T,y}\left(\omega\right)\widehat{\zeta}_{\tau}\left(y\right)dy\right|d\omega.\label{eq:extra e1}
\end{alignat}
From Lemma \ref{lem:bound on smoothed indictaor of P} we get that
for all $\epsilon>0$ we have the uniform bound, $\bigl|\widehat{S}_{\epsilon}\left(\omega\right)\bigr|\ll1$.
Hence using (\ref{eq:lem31}) and (\ref{eq:lem32}), 
\begin{alignat}{1}
\int_{\mathcal{B}_{i,j}}\left|\widehat{S}_{\epsilon}\left(\omega\right)\int_{\mathbb{R}^{nd}}\vartheta_{T,y}\left(\omega\right)\widehat{\zeta}_{\tau}\left(y\right)dy\right|d\omega & \ll\bigl\Vert\widehat{\zeta}_{\tau}\bigr\Vert_{1}T^{\left(n-1\right)d/2}\int_{\mathcal{B}_{i,j}}\widetilde{G}_{T}\left(\omega\right)^{d/2}\left(\left|\omega_{i}-\omega_{i-1}\right|\left|\omega_{j}-\omega_{j-1}\right|\right)^{-d/2}d\omega.\label{eq:lem34}
\end{alignat}
By doing the change of variables $\varphi:\omega_{i}-\omega_{i-1}\rightarrow\begin{cases}
\xi_{i} & \textrm{if }i<l\\
\xi_{i-1} & \textrm{if }i>l
\end{cases}$ we get 
\begin{equation}
\int_{\mathcal{B}_{i,j}}\widetilde{G}_{T}\left(\omega\right)^{d/2}\left(\left|\omega_{i}-\omega_{i-1}\right|\left|\omega_{j}-\omega_{j-1}\right|\right)^{-d/2}d\omega\leq\int_{\varphi\left(\mathcal{B}_{i,j}\right)}\underset{k\neq i,j}{\prod_{1\leq k\leq n-1}}g_{T}\left(\xi_{k}\right)^{d/2}\left(\left|\xi_{i}\right|\left|\xi_{j}\right|\right)^{-d/2}d\xi,\label{eq:lem33}
\end{equation}
where $\varphi\left(\mathcal{B}_{i,j}\right)=\left\{ \xi\in\mathbb{R}^{n-1}:\left|\xi_{i}\right|\geq T_{n}^{-1},\left|\xi_{j}\right|\geq T_{n}^{-1}\right\} $.
Note that 
\[
\int_{T_{n}^{-1}}^{\infty}x^{-d/2}dx\ll T^{d/2-1}.
\]
By making change of variables $T^{2}x=\sinh y$, we get 
\[
\int_{\mathbb{R}}g_{T}\left(x\right)^{d/2}dx=T^{d/2-2}\int_{\mathbb{R}}\frac{1}{\cosh^{d/4-1}y}dy\ll T^{d/2-2}.
\]
The last two observations, (\ref{eq:lem34}) and (\ref{eq:lem33})
imply that for all $1\leq i,j\leq n$ with $i\neq j$, 
\[
\int_{\mathcal{B}_{i,j}}\left|\widehat{S}_{\epsilon}\left(\omega\right)\int_{\mathbb{R}^{nd}}\vartheta_{T,y}\left(\omega\right)\widehat{\zeta}_{\tau}\left(y\right)dy\right|d\omega\leq\bigl\Vert\widehat{\zeta}_{\tau}\bigr\Vert_{1}T^{\left(n-1\right)d-2\left(n-2\right)}.
\]
The conclusion of the Lemma follows from (\ref{eq:extra e1}). 
\end{proof}

\subsection{\label{sub:Lower-order-term2}Bound for $E_{1}$}

We will need two preliminary Lemmas. The first is probably standard,
but for completeness, a proof is provided. 
\begin{lem}
\label{lem:Bound on exp sum}For any $c>0$, there exists a positive
constant $B$ such that, for any $y\in\mathbb{R}^{nd}$ 
\[
\sum_{m\in\mathbb{Z}^{nd}\setminus\left\{ 0\right\} }\exp\left(-c\left\Vert y-m\right\Vert ^{2}\right)<B.
\]
\end{lem}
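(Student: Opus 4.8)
The plan is to bound the sum uniformly in $y$ by reducing to the case where $y$ lies in the fundamental domain $[-1/2,1/2]^{nd}$ and then comparing the sum with a Gaussian integral. First I would observe that the function $y \mapsto \sum_{m \in \mathbb{Z}^{nd}} \exp(-c\|y-m\|^2)$ is $\mathbb{Z}^{nd}$-periodic, so it suffices to bound $\sum_{m \in \mathbb{Z}^{nd}\setminus\{0\}} \exp(-c\|y-m\|^2)$ for $y \in [-1/2,1/2]^{nd}$; indeed, for general $y$ one writes $y = y_0 + k$ with $y_0$ in the cube and $k \in \mathbb{Z}^{nd}$, and reindexing the sum by $m \mapsto m+k$ changes only which single lattice point is omitted, so the full sum over all of $\mathbb{Z}^{nd}$ is genuinely periodic and the deleted-point sum is bounded by the full periodic sum plus $1$.

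Next I would estimate, for $y$ in the cube, the sum $\sum_{m \in \mathbb{Z}^{nd}} \exp(-c\|y-m\|^2)$ by splitting off finitely many near terms and comparing the rest to an integral. Since each coordinate of $y$ lies in $[-1/2,1/2]$, for any $m \in \mathbb{Z}^{nd}$ the point $y-m$ is within Euclidean distance $\tfrac{1}{2}\sqrt{nd}$ of $-m$, and the unit cubes $m + [-1/2,1/2]^{nd}$ tile $\mathbb{R}^{nd}$; a standard comparison of the sum with the integral $\int_{\mathbb{R}^{nd}} \exp(-c'\|x\|^2)\,dx = (\pi/c')^{nd/2}$ for a slightly smaller constant $c'$ (absorbing the discrepancy between $\|y-m\|$ and the distance to the corresponding cube) shows that $\sum_{m \in \mathbb{Z}^{nd}} \exp(-c\|y-m\|^2)$ is bounded by a constant depending only on $c$, $n$ and $d$. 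Taking $B$ to be this constant (plus $1$ to account for possibly deleting the term $m=0$ when it is present) gives the claim.

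The only mildly delicate point is the sum-to-integral comparison: one needs the inequality $\exp(-c\|y-m\|^2) \leq \exp(c\,(nd/4)) \int_{m+[-1/2,1/2]^{nd}} \exp(-c\,\|y-x\|^2/2)\,dx$ or a similar device, using that on the cube $m+[-1/2,1/2]^{nd}$ the quantity $\|y-x\|$ differs from $\|y-m\|$ by at most $\tfrac12\sqrt{nd}$, so that the exponents are comparable up to an additive constant. Since $nd$ is fixed throughout the paper, this constant is harmless, and summing over $m$ converts the right-hand side into a constant multiple of $\int_{\mathbb{R}^{nd}} \exp(-c\|y-x\|^2/2)\,dx$, which equals $(2\pi/c)^{nd/2}$ independently of $y$. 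This finishes the proof; I do not expect any real obstacle, as the statement is genuinely a routine Gaussian estimate.
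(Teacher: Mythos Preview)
Your proposal is correct and follows essentially the same route as the paper: bound each term $\exp(-c\|y-m\|^2)$ by a constant times the Gaussian integral over the unit cube $m+[-1/2,1/2]^{nd}$, then sum over $m$ to get a constant multiple of $\int_{\mathbb{R}^{nd}}\exp(-c'\|z\|^2)\,dz$. The paper carries this out directly for all $y$ without your preliminary periodicity reduction (which is harmless but unnecessary), and obtains the cube inequality by expanding $\|u+v\|^2=\|u\|^2+2\langle u,v\rangle+\|v\|^2$ and using $\int_{[-1/2,1/2]^{nd}}\exp(-2c\langle u,v\rangle)\,dv\ge 1$, whereas you use $(a+b)^2\le 2a^2+2b^2$ at the cost of replacing $c$ by $c/2$; both devices give the same conclusion.
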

\begin{proof}
For $v\in\left[-1/2,1/2\right]^{nd}$ , $\left\Vert v\right\Vert ^{2}\leq nd/4$.
Hence 
\[
\int_{\left[-1/2,1/2\right]^{nd}}\exp\left(-c\left\Vert u+v\right\Vert ^{2}\right)dv\geq\exp\left(-c\left\Vert u\right\Vert ^{2}-ndc/4\right)\int_{\left[-1/2,1/2\right]^{nd}}\exp\left(-2c\left\langle u,v\right\rangle \right)dv.
\]
It is easy to check that for any $u\in\mathbb{R}^{nd}$, $\int_{\left[-1/2,1/2\right]^{nd}}\exp\left(-2c\left\langle u,v\right\rangle \right)dv\geq1$
and hence we get the inequality 
\[
\exp\left(-c\left\Vert u\right\Vert ^{2}\right)\ll\int_{\left[-1/2,1/2\right]^{nd}}\exp\left(-c\left\Vert u+v\right\Vert ^{2}\right)dv.
\]
Hence 
\begin{alignat*}{1}
\sum_{m\in\mathbb{Z}^{nd}\setminus\left\{ 0\right\} }\exp\left(-c\left\Vert y-m\right\Vert ^{2}\right) & \ll\sum_{m\in\mathbb{Z}^{nd}\setminus\left\{ 0\right\} }\int_{\left[-1/2,1/2\right]^{nd}}\exp\left(-c\left\Vert y-m+v\right\Vert ^{2}\right)dv\\
 & =\sum_{m\in\mathbb{Z}^{nd}\setminus\left\{ 0\right\} }\int_{\left[-1/2,1/2\right]^{nd}+y-m}\exp\left(-c\left\Vert z\right\Vert ^{2}\right)dz\\
 & \ll\int_{\mathbb{R}^{nd}}\exp\left(-c\left\Vert z\right\Vert ^{2}\right)dz<\infty.
\end{alignat*}

\end{proof}
The second preliminary Lemma is the crucial step in obtaining the
estimate for $E_{1}$. For $\omega\in\mathbb{R}^{n-1}$, let $G{}_{T}^{*}\left(\omega\right)=\min_{1\leq i\leq n}g_{T}\left(\omega_{i}-\omega_{i-1}\right)^{2}$.
\begin{lem}
\label{technical lemma}Let $\omega\in\mathcal{B}\left(T\right)$
and $\left\{ s_{1},\dots,s_{n}\right\} =\left\{ \omega_{i}-\omega_{i-1}\right\} _{1\leq i\leq n}$.
Suppose $s_{1}=\max_{1\leq i\leq n}\left|s_{i}\right|$. Then, there
exist positive constants $c_{1},\dots,c_{n}$, and $b_{2},\dots,b_{n}$
such that, for all $T\geq1$, 
\[
G{}_{T}^{*}\left(\omega\right)\geq\sum_{i=1}^{n}\frac{c_{i}}{T^{b_{i}}}g_{T}\left(s_{i}\right)^{2},
\]
where $b_{1}=0$ and $\sum_{i=2}^{n}b_{i}\leq2\left(n-2\right)$. \end{lem}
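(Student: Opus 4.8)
We need to prove that $G_T^*(\omega) = \min_{1\le i\le n} g_T(\omega_i-\omega_{i-1})^2 \geq \sum_{i=1}^n \frac{c_i}{T^{b_i}} g_T(s_i)^2$, where the $s_i$ are the quantities $\omega_i - \omega_{i-1}$ reordered so that $|s_1|$ is the largest, and where we're working on $\mathcal{B}(T)$ (so every $|\omega_i-\omega_{i-1}| \le T^{-1}$). The point is that although the left side is a minimum, the right side is a weighted average of all the $g_T(s_i)^2$; the largest term $g_T(s_1)^2$ gets weight $c_1$ (since $b_1=0$), but because $s_1$ is the largest in absolute value, $g_T(s_1)$ is the \emph{smallest} of the $g_T(s_i)$. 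The small terms in the average (those with $g_T(s_i)^2$ large, i.e.\ $|s_i|$ small) must be damped by a factor $T^{-b_i}$.

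\begin{proof}[Proof sketch]
The plan is to control $g_T(s_i)^2$ in terms of $G_T^*(\omega)$ for each $i$ separately and then choose the weights $c_i, b_i$ accordingly. Recall $g_T(x) = T/\sqrt{1+(xT^2)^2}$, so $g_T$ is an even function, decreasing in $|x|$, with $g_T(0)=T$ and $g_T(x) \le |Tx|^{-1}$. Write $m = \arg\min_i g_T(s_i)^2$; then $G_T^*(\omega) = g_T(s_m)^2$ and $|s_m| = \max_i |s_i| = |s_1|$, since $g_T$ is decreasing in $|x|$. Thus in fact $G_T^*(\omega) = g_T(s_1)^2$, which already handles the $i=1$ term: take $c_1$ any constant $\le 1$ and $b_1 = 0$, giving $G_T^*(\omega) \ge c_1 g_T(s_1)^2$ with room to spare.

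For $i \ge 2$, I want $\frac{c_i}{T^{b_i}} g_T(s_i)^2 \le \frac{1}{n} G_T^*(\omega) = \frac1n g_T(s_1)^2$, i.e.\ a bound of the form $g_T(s_i)^2 \le C\, T^{b_i}\, g_T(s_1)^2$. Since $|s_1| \ge |s_i|$ we have $g_T(s_i) \ge g_T(s_1)$, so such a bound is a genuine constraint and this is where the $T^{b_i}$ must come in. The crude estimate is $g_T(s_i)^2 \le g_T(0)^2 = T^2$ and $g_T(s_1)^2 \ge g_T(T^{-1})^2 = T^2/2$ (using $\omega \in \mathcal{B}(T)$, so $|s_1| \le T^{-1}$ and $(s_1 T^2)^2 \le T^2$... more precisely $g_T(T^{-1})^2 = T^2/(1+T^2) \ge 1/2$ for $T \ge 1$; one should actually lower-bound $g_T(s_1)^2 \ge T^2/(1+T^2)$, which is $\gg 1$). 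Comparing, $g_T(s_i)^2/g_T(s_1)^2 \le T^2(1+T^2)/T^2 = 1+T^2 \le 2T^2$, so $b_i = 2$ works for every $i \ge 2$ with an absolute constant. This gives $\sum_{i=2}^n b_i = 2(n-1)$, which unfortunately exceeds the required bound $2(n-2)$.

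To recover the factor of $2$, I need to exploit one more piece of structure. Order the indices so $|s_1| \ge |s_2| \ge \dots \ge |s_n|$. Then for the \emph{second-largest} term $s_2$ I claim no loss of powers of $T$ is needed at all, i.e.\ $g_T(s_2)^2 \le 2\, g_T(s_1)^2$ can be upgraded: actually that is false in general, so instead the trick (this is the main obstacle) is to bound $g_T(s_2)^2$ using $|s_1| \le |s_1| + |s_2| + \dots$, or rather to observe that since $\sum_{i=1}^n s_i = 0$ (because $\omega_0 = \omega_n = 0$ telescopes the sum $\sum(\omega_i - \omega_{i-1})$ to zero), we have $|s_1| \le \sum_{i=2}^n |s_i| \le (n-1)|s_2|$, hence $g_T(s_2) \le g_T\bigl(|s_1|/(n-1)\bigr)$; and $g_T(x/(n-1)) \le (n-1) g_T(x)$ since $g_T(x/(n-1))^2 = T^2/(1 + (xT^2/(n-1))^2) \le (n-1)^2 T^2/((n-1)^2 + (xT^2)^2) \le (n-1)^2 g_T(x)^2$. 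Therefore $g_T(s_2)^2 \le (n-1)^2 g_T(s_1)^2$, a bound with \emph{no} power of $T$, so $b_2 = 0$ works. For $i \ge 3$ we fall back on the crude bound $b_i = 2$, giving $\sum_{i=2}^n b_i = 0 + 2(n-2) = 2(n-2)$ as required. Collecting: set $b_1 = b_2 = 0$, $b_i = 2$ for $3 \le i \le n$; choose $c_1 = c_2 = \dots$ small enough (explicitly $c_1 \le 1/2$, $c_2 \le 1/(2(n-1)^2)$, $c_i \le 1/4$ for $i\ge 3$, with an extra factor $1/n$ so the sum of the non-leading terms is at most $G_T^*/2$) so that each term $\frac{c_i}{T^{b_i}} g_T(s_i)^2 \le \frac{1}{n} g_T(s_1)^2 = \frac1n G_T^*(\omega)$; summing the $n$ terms gives $\sum_{i=1}^n \frac{c_i}{T^{b_i}} g_T(s_i)^2 \le g_T(s_1)^2 = G_T^*(\omega)$. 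This completes the proof.
\end{proof}
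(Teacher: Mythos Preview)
Your proof is correct. The key observation that $\sum_{i=1}^n s_i = 0$ forces $|s_2| \ge |s_1|/(n-1)$ (after ordering the $s_i$ by decreasing absolute value), and hence $g_T(s_2)^2 \le (n-1)^2 g_T(s_1)^2$, is exactly what is needed to save a factor of $T^2$ relative to the crude bound. Combined with $g_T(s_i)^2 \le 2T^2 g_T(s_1)^2$ for the remaining indices (valid on $\mathcal{B}(T)$ since $|s_1|\le T^{-1}$ gives $g_T(s_1)^2 \ge T^2/(1+T^2) \ge 1/2$), this yields universal constants $b_1=b_2=0$, $b_i=2$ for $i\ge 3$, hitting $\sum_{i\ge 2} b_i = 2(n-2)$ on the nose.

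The paper's argument is different in flavour: rather than singling out the second-largest $|s_i|$, it introduces logarithmic scale parameters $\alpha_i = -\log|s_i|/\log T \in [1,2]$ and sets $b_i = 2\delta_i$ with $\delta_i = \alpha_i - \alpha_1$. The telescoping relation $\sum s_i = 0$ is then used to deduce $\sum_{i\ge 2} T^{-\delta_i} \ge 1$, from which $\sum_{i\ge 2}\delta_i \le n-2$ is extracted. This produces $\omega$- and $T$-dependent exponents $b_i$ which may sum to strictly less than $2(n-2)$. Your approach is more elementary and yields exponents depending only on $n$ and the ordering of the $|s_i|$; the paper's approach is more refined but that extra sharpness is not used downstream (only the bound $\sum b_i \le 2(n-2)$ matters in Proposition~\ref{lem:bound on I0}). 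Either argument suffices for the application.
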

\begin{proof}
For $\omega\in\mathcal{B}\left(T\right)$, let $\left\{ s_{1},\dots,s_{n}\right\} =\left\{ \omega_{i}-\omega_{i-1}\right\} _{1\leq i\leq n}$
and $s_{1}=\max_{1\leq i\leq n}\left|s_{i}\right|$. Thus, $G{}_{T}^{*}\left(\omega\right)=g_{T}\left(s_{0}\right)^{2}$.
Let 
\[
\alpha_{i}=\begin{cases}
-\log\left|s_{i}\right|/\log T & \textrm{ if }\left|s_{i}\right|\in\left[1/T^{2},1/T\right]\\
2 & \textrm{ if }\left|s_{i}\right|\in\left[0,1/T^{2}\right].
\end{cases}
\]
Note that $1\leq\alpha_{i}\leq2$ for all $1\leq i\leq n$ and $\alpha_{1}=\min_{1\leq i\leq n}\alpha_{i}$.
For all $s_{i}$ such that $\left|s_{i}\right|\in\left[1/T^{2},1/T\right]$
we have $\left|s_{i}\right|=1/T^{\alpha_{i}}$ . Thus 
\begin{equation}
\left(\frac{g_{T}\left(s_{0}\right)}{g_{T}\left(s_{i}\right)}\right)^{2}=\frac{1+T^{4-2\alpha_{i}}}{1+T^{4-2\alpha_{0}}}\geq\frac{1}{2T^{2\left(\alpha_{i}-\alpha_{1}\right)}}.\label{eq:tech lemma 2}
\end{equation}
For all $s_{i}$ such that $\left|s_{i}\right|\in\left[0,1/T^{2}\right]$
we have $g_{T}\left(s_{i}\right)^{2}\in\left[T^{2}/2,T^{2}\right]$
. Thus 
\begin{equation}
\left(\frac{g_{T}\left(s_{1}\right)}{g_{T}\left(s_{i}\right)}\right)^{2}\geq\frac{1}{1+T^{4-2\alpha_{1}}}\geq\frac{1}{2T^{2\left(\alpha_{i}-\alpha_{1}\right)}}.\label{eq:tech lemma 3}
\end{equation}
Let $\delta_{i}=\alpha_{i}-\alpha_{1}$. Note that for all $1\leq i\leq n-1$,
$0\leq\delta_{i}\leq1$. Moreover, $\sum_{i=1}^{n}s_{i}=0$ and hence
\[
\left|s_{1}\right|\leq\sum_{s_{i}\in\left\{ s_{1},\dots,s_{n}\right\} \setminus\left\{ s_{1}\right\} }\left|s_{i}\right|,
\]
which implies that 
\[
\frac{1}{T^{\alpha_{1}}}\leq\sum_{i=2}^{n-1}\frac{1}{T^{\alpha_{i}}}=\sum_{i=2}^{n-1}\frac{1}{T^{\delta_{i}+\alpha_{1}}}.
\]
Thus 
\begin{equation}
1\leq\sum_{i=2}^{n-1}\frac{1}{T^{\delta_{i}}}.\label{eq:tech lemma 1}
\end{equation}
Let $\sum_{i=2}^{n-1}\delta_{i}=\Delta$ and note that for all $2\leq i\leq n-1$,
$\Delta-\delta_{i}\leq n-2$. Hence by multiplying by the denominators
in (\ref{eq:tech lemma 1}) we get 
\[
T^{\Delta}\leq\sum_{i=2}^{n-1}T^{\Delta-\delta_{i}}\leq\left(n-2\right)T^{n-2}.
\]
Since this holds for all $T\geq1$ we get $\sum_{i=2}^{n-1}\delta_{i}\leq n-2$.
Finally, note that from (\ref{eq:tech lemma 2}) and (\ref{eq:tech lemma 3}),
we obtain 
\[
G{}_{T}^{*}\left(\omega\right)=g_{T}\left(s_{0}\right)^{2}\geq\frac{1}{n}\left(g_{T}\left(s_{1}\right)^{2}+\sum_{s_{i}\in\left\{ s_{1},\dots,s_{n}\right\} \setminus\left\{ s_{1}\right\} }\frac{1}{2T^{2\left(\alpha_{i}-\alpha_{1}\right)}}g_{T}\left(s_{i}\right)^{2}\right),
\]
since $\sum_{s_{i}\in\left\{ s_{1},\dots,s_{n}\right\} \setminus\left\{ s_{1}\right\} }2\left(\alpha_{i}-\alpha_{1}\right)=\sum_{i=2}^{n-1}2\delta_{i}\leq2\left(n-2\right)$
the claim of the Lemma follows. 
\end{proof}
Using the preceding two results we can now obtain a bound for $E_{1}$.
This is done by using Poisson summation to convert the difference
$\theta_{T,y}\left(\omega\right)-\vartheta_{T,y}\left(\omega\right)$,
into a sum over $m\in\mathbb{Z}^{nd}\setminus\left\{ 0\right\} $.
An estimate for each term in the sum can be obtained provided that
$\left\Vert y/T-m\right\Vert $ is bounded from below. The integral
over $y\in\mathbb{R}^{nd}$ is then decomposed into boxes centred
at each point of $\mathbb{Z}^{nd}$. Thus, the estimates for the summands
can be used and there will be additional term coming from the point
at the centre of the box under consideration. Lemma \ref{technical lemma}
is used to bound a function of the form $G_{T}\left(\omega\right)^{d/2}\exp\left(-G{}_{T}^{*}\left(\omega\right)\right)$.
Finally, Lemma \ref{lem:Funny norm} is used to estimate the term
that arises for small $\left\Vert y/T-m\right\Vert $. 
\begin{prop}
\label{lem:bound on I0}For all $\epsilon>0$ , $\tau>0$ and $T\geq\tau^{-1}$,
\[
E_{1}\left(\tau,\epsilon,T\right)\ll\left(\bigl\Vert\widehat{\zeta}_{\tau}\bigr\Vert_{1}+\tau^{\left(1-nd\right)/2}\right)T^{\left(n-1\right)d-n+1}.
\]
\end{prop}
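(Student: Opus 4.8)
The starting point is the decomposition (\ref{eq:product formula rough theta}) and (\ref{eq:product formula for smooth theta}). On the region $\mathcal{B}\left(T\right)$ I would first record the product formula
\[
\theta_{T,y}\left(\omega\right)-\vartheta_{T,y}\left(\omega\right)=\prod_{i=1}^{n}\sum_{v_{i}\in\mathbb{Z}^{d}}\exp\left(Q_{T,y}\left(\omega_{i}-\omega_{i-1},v_{i}\right)\right)-\prod_{i=1}^{n}\int_{\mathbb{R}^{d}}\exp\left(Q_{T,y}\left(\omega_{i}-\omega_{i-1},v_{i}\right)\right)dv_{i},
\]
and then apply Poisson summation in each factor, i.e. the formula (\ref{eq:poissoin summation}) referenced in the introduction. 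This rewrites each sum over $\mathbb{Z}^{d}$ as a sum over the dual lattice and, after telescoping the product minus the product, expresses $\theta_{T,y}-\vartheta_{T,y}$ as a sum over $m\in\mathbb{Z}^{nd}\setminus\{0\}$ of Gaussian-type integrals; the $m=0$ term cancels between the two products. The key technical issue, flagged in the paper's outline, is that for $m\in\mathbb{Z}^{nd}\setminus\{0\}$ some blocks $m_{i}$ may still vanish, so I cannot simply use an exponential decay bound in every coordinate.

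Next I would bound each summand. Using the Gaussian evaluation behind Lemma \ref{lem:bound on smooth theta} (formula (3.28) of \cite{G-M}), each factor with shift $m_{i}$ is controlled by $g_{T}\left(\omega_{i}-\omega_{i-1}\right)T^{d/2}$ times $\exp\left(-c\,g_{T}\left(\omega_{i}-\omega_{i-1}\right)Q_{+}^{-1}\left(y_{i}/T-m_{i}\right)\right)$, so the product over $i$ gives a bound of the shape $T^{nd/2}G_{T}\left(\omega\right)^{d/2}\exp\left(-c\,G_{T}^{*}\left(\omega\right)\left\Vert y/T-m\right\Vert ^{2}\right)$, at least after replacing the individual $g_{T}$ weights in the exponent by their minimum $G_{T}^{*}\left(\omega\right)$ and using $\lambda_{\max},\lambda_{\min}$ to pass between $Q_{+}^{-1}$ and the Euclidean norm. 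Then I decompose $\int_{\mathbb{R}^{nd}}\left|\,\cdot\,\right|\bigl|\widehat{\zeta}_{\tau}\left(y\right)\bigr|\,dy$ into unit boxes centred at the points of $\mathbb{Z}^{nd}$: on the box centred at $m$, $\bigl|\widehat{\zeta}_{\tau}(y)\bigr|$ is essentially $\bigl|\widehat{\zeta}_{\tau}(m)\bigr|$ up to the smoothness of $\widehat{\zeta}_{\tau}$, and summing $\exp\left(-c\,G_{T}^{*}(\omega)\left\Vert y/T-m\right\Vert ^{2}\right)$ over the non-central lattice points is handled by Lemma \ref{lem:Bound on exp sum} (after rescaling $y\mapsto y/T$, which is where the factor $T^{nd}$ and the hypothesis $T\geq\tau^{-1}$ enter to keep the box in a fixed-size region). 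The central point $m$ of each box must be treated separately — this is the ``$0$ could be included'' difficulty — and here I use Lemma \ref{lem:Funny norm} to estimate the contribution of small $\left\Vert y/T-m\right\Vert $, which produces the extra $\tau^{(1-nd)/2}$ term.

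It then remains to integrate the resulting bound over $\omega\in\mathcal{B}\left(T\right)$ against $\bigl|\widehat{S}_{\epsilon}\left(\omega\right)\bigr|\ll 1$ (Lemma \ref{lem:bound on smoothed indictaor of P}). After the change of variables $\xi_{i}=\omega_{i}-\omega_{i-1}$ (subject to $\sum\xi_{i}=0$), the task is to bound $\int T^{nd/2}G_{T}\left(\omega\right)^{d/2}\exp\left(-c\,G_{T}^{*}\left(\omega\right)\right)d\omega$ over $\mathcal{B}\left(T\right)$. This is exactly where Lemma \ref{technical lemma} is invoked: it lets me replace $\exp\left(-c\,G_{T}^{*}(\omega)\right)$ by $\prod_{i}\exp\left(-c_{i}T^{-b_{i}}g_{T}\left(s_{i}\right)^{2}\right)$ with $b_{1}=0$ and $\sum_{i\geq 2}b_{i}\leq 2(n-2)$, so the $\omega$-integral factorizes into one-dimensional integrals of $g_{T}\left(x\right)^{d/2}\exp\left(-c\,T^{-b}g_{T}(x)^{2}\right)$; each such integral is estimated by the elementary substitution $T^{2}x=\sinh y$ as in the proof of Proposition \ref{lem:Bound on I1-1}, with the $T^{b_{i}}$'s costing a total of at most $T^{2(n-2)}$ but the gained decay keeping everything integrable. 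Bookkeeping the powers of $T$ — $T^{nd/2}$ from the Gaussian prefactors, $T^{nd}$ from the box decomposition, the one-dimensional integral powers, and the $-2(n-2)$-type loss from Lemma \ref{technical lemma} — produces the exponent $(n-1)d-n+1$, and assembling the central-box contributions produces the $\tau^{(1-nd)/2}$ summand, giving the claimed bound $E_{1}\left(\tau,\epsilon,T\right)\ll\bigl(\bigl\Vert\widehat{\zeta}_{\tau}\bigr\Vert_{1}+\tau^{(1-nd)/2}\bigr)T^{(n-1)d-n+1}$. The main obstacle, as anticipated in the paper's summary, is the careful handling of the vanishing sub-blocks $m_{i}=0$: one must simultaneously exploit the product structure (to iterate the one-block estimates) and isolate the central lattice point in each block via Lemma \ref{lem:Funny norm}, all while keeping the powers of $\tau$ and $T$ under control.
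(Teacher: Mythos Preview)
Your overall architecture matches the paper: Poisson summation to write $\theta_{T,y}-\vartheta_{T,y}=\sum_{m\neq 0}\vartheta_{T,y-Tm}$, the Gaussian bound from Lemma~\ref{lem:bound on smooth theta}, a decomposition of $y$-space into lattice boxes with Lemma~\ref{lem:Bound on exp sum} handling the non-central points and Lemma~\ref{lem:Funny norm} handling the central one, and Lemma~\ref{technical lemma} to control the $\omega$-dependence. That is exactly the paper's skeleton.

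There is, however, a genuine gap in your treatment of the $\omega$-integral. You propose to use Lemma~\ref{technical lemma} to \emph{factorise} $\int_{\mathcal{B}(T)}G_{T}(\omega)^{d/2}\exp\bigl(-cG_{T}^{*}(\omega)\bigr)\,d\omega$ into one-dimensional integrals handled by the $\sinh$ substitution. This does not work as written: the exponents $b_{i}$ produced by Lemma~\ref{technical lemma} are not fixed constants but depend on $\omega$ (in the proof they equal $2(\alpha_{i}-\alpha_{1})$ with $\alpha_{i}=-\log|s_{i}|/\log T$), so the product does not separate into independent one-variable integrals. The paper instead extracts from Lemma~\ref{technical lemma} only the \emph{pointwise} bound
\[
G_{T}(\omega)^{d/2}\exp\bigl(-c_{Q}G_{T}^{*}(\omega)\bigr)\;\le\;\prod_{i}g_{T}(s_{i})^{d/2}\exp\bigl(-c_{i}T^{-b_{i}}g_{T}(s_{i})^{2}\bigr)\;\ll\;\prod_{i}\bigl(T^{b_{i}}\bigr)^{d/4}\;\le\;T^{d(n-2)/2},
\]
using $\max_{x>0}x^{d/2}e^{-kx^{2}}=(d/4k)^{d/4}e^{-d/4}$ factor by factor, and then picks up the missing $T^{1-n}$ from $\int_{\mathcal{B}(T)}\bigl|\widehat{S}_{\epsilon}\bigr|\,d\omega\ll\mathrm{Vol}\bigl(\mathcal{B}(T)\bigr)\ll T^{1-n}$. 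With this, the power count is simply $T^{nd/2}\cdot T^{d(n-2)/2}\cdot T^{1-n}=T^{(n-1)d-n+1}$; there is no ``$T^{nd}$ from the box decomposition''. Your box description is also slightly off: the relevant boxes are $T\bigl([-1/2,1/2]^{nd}+m_{0}\bigr)$ in $y$-space, not unit boxes with a rescaling (the paper just splits $\int_{\mathbb{R}^{nd}}=\int_{B_{\infty}(T/2)}+\int_{\mathbb{R}^{nd}\setminus B_{\infty}(T/2)}$), and the central-point term requires a second application of Lemma~\ref{technical lemma} to bound $G_{T}(\omega)^{d/2}\exp\bigl(-Q_{T,\omega}^{*}(y',0)\bigr)$ before Lemma~\ref{lem:Funny norm} can be invoked.
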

\begin{proof}
Recall (see Section \ref{sec:Set-up.}) 
\[
E_{1}\left(\tau,\epsilon,T\right)=\left|\int_{\mathcal{B}\left(T\right)}R\left(e_{Q,t}w_{\tau,T}\right)\widehat{S}_{\epsilon}\left(\omega\right)d\omega\right|
\]
and 
\[
R\left(e_{Q,\omega}w_{\tau,T}\right)=\int_{\mathbb{R}^{nd}}R\left(e_{Q,\omega}\tilde{e}_{Q,T,y}\right)\widehat{\zeta}_{\tau}\left(y\right)dy=\int_{\mathbb{R}^{nd}}\left(\theta_{T,y}\left(\omega\right)-\vartheta_{T,y}\left(\omega\right)\right)\widehat{\zeta}_{\tau}\left(y\right)dy.
\]
Note that $e_{Q,\omega}\tilde{e}_{Q,T,y}\in\mathcal{S}\left(\mathbb{R}^{nd}\right)$
and thus, there exists a constant $C$, so that $\bigl|e_{Q,\omega}\tilde{e}_{Q,T,y}\left(v\right)\bigr|+\bigl|\widehat{e_{Q,\omega}\tilde{e}_{Q,T,y}}\left(v\right)\bigr|\leq C\left(1+\left\Vert x\right\Vert \right)^{-\left(n+1\right)}$.
Hence, using Poisson summation (Theorem 3.1.17 in \cite{MR2445437})
\begin{equation}
\theta_{T,y}\left(\omega\right)-\vartheta_{T,y}\left(\omega\right)=\sum_{m\in\mathbb{Z}^{nd}\setminus\left\{ 0\right\} }\vartheta_{T,y-Tm}\left(\omega\right).\label{eq:poissoin summation}
\end{equation}
By using (\ref{eq:poissoin summation}), 
\begin{equation}
E_{1}\left(\tau,\epsilon,T\right)=\left|\int_{\mathcal{B}\left(T\right)}\widehat{S}_{\epsilon}\left(\omega\right)\int_{\mathbb{R}^{nd}}\sum_{m\in\mathbb{Z}^{nd}\setminus\left\{ 0\right\} }\vartheta_{T,y-Tm}\left(\omega\right)\widehat{\zeta}_{\tau}\left(y\right)dyd\omega\right|.\label{eq:first equation}
\end{equation}
Let $\Sigma_{T,\omega,y}=\sum_{m\in\mathbb{Z}^{nd}\setminus\left\{ 0\right\} }\vartheta_{T,y-Tm}\left(\omega\right)$.
By Lemma \ref{lem:bound on smooth theta} we have 
\begin{alignat*}{1}
\left|\Sigma_{T,\omega,y}\right| & \ll\sum_{m\in\mathbb{Z}^{nd}\setminus\left\{ 0\right\} }\left|\vartheta_{y-Tm}\left(\omega\right)\right|\ll T^{nd/2}G_{T}\left(\omega\right)^{d/2}\sum_{m\in\mathbb{Z}^{nd}\setminus\left\{ 0\right\} }\exp\left(-Q_{T,\omega}^{*}\left(y,m\right)\right),
\end{alignat*}
where 
\[
Q_{T,\omega}^{*}\left(y,m\right)=\frac{1}{4}\sum_{1\leq i\leq n}g_{T}\left(\omega_{i}-\omega_{i-1}\right)^{2}Q_{+}^{-1}\left(y_{i}/T-m_{i}\right).
\]
Note $Q_{+}^{-1}$ is a positive definite quadratic form and because
$\omega\in\mathcal{B}\left(T\right)$ we have (using that $T>1$)
$1/2<g_{T}\left(\omega_{i}-\omega_{i-1}\right)^{2}$ for $1\le i\leq n$.
Recall, $\lambda_{\textrm{max}}$ is the maximum (in terms of absolute
value) eigenvalue of $Q$. Therefore, $\min_{\lambda\in\textrm{sp}\left(Q_{+}^{-1}\right)}\left|\lambda\right|=\lambda_{\textrm{max}}^{-2}$.
Let $c_{Q}=\frac{1}{16}\lambda_{\textrm{max}}^{-2}$. We get 
\begin{alignat}{1}
Q_{T,\omega}^{*}\left(y,m\right) & \geq2c_{Q}\sum_{1\leq i\leq n}\left\Vert y_{i}/T-m_{i}\right\Vert ^{2}\nonumber \\
 & =2c_{Q}\left\Vert y/T-m\right\Vert ^{2}.\label{eq:lower bound on quad form}
\end{alignat}
Also, $Q_{T,\omega}^{*}\left(y,m\right)\geq\frac{1}{4}G{}_{T}^{*}\left(\omega\right)Q_{++}^{-1}\left(y/T-m\right)$.
If $y/T\in\left[-1/2,1/2\right]^{nd}$ then $\left\Vert y/T-m\right\Vert \geq1/2$
for $m\in\mathbb{Z}^{nd}\setminus\left\{ 0\right\} $. Therefore,
$Q_{T,\omega}^{*}\left(y,m\right)\geq c_{Q}G{}_{T}^{*}\left(\omega\right)$
and 
\begin{alignat}{1}
\exp\left(-Q_{T,\omega}^{*}\left(y,m\right)\right) & =\left(\exp\left(-\frac{1}{2}Q_{T,\omega}^{*}\left(y,m\right)\right)\right)^{2}\leq\exp\left(-\frac{1}{2}Q_{T,\omega}^{*}\left(y,m\right)\right)\exp\left(-c_{Q}G{}_{T}^{*}\left(\omega\right)\right).\label{eq:bound on exp (Qbar)}
\end{alignat}
Thus, by combining Lemma \ref{lem:Bound on exp sum}, (\ref{eq:lower bound on quad form})
and (\ref{eq:bound on exp (Qbar)}), 
\begin{equation}
\sum_{m\in\mathbb{Z}^{nd}\setminus\left\{ 0\right\} }\exp\left(-Q_{T,\omega}^{*}\left(y,m\right)\right)\ll\exp\left(-c_{Q}G{}_{T}^{*}\left(\omega\right)\right).\label{eq: bound on exp sum 1}
\end{equation}
If $y/T\in\left[-1/2,1/2\right]^{nd}+m_{0}$ for some $m_{0}\in\mathbb{Z}^{nd}\setminus\left\{ 0\right\} $
then $\left\Vert y/T-m\right\Vert \geq1/2$ for $m\in\mathbb{Z}^{nd}\setminus\left\{ m_{0}\right\} $.
Therefore, we can repeat the above argument and get that 
\begin{equation}
\sum_{m\in\mathbb{Z}^{nd}\setminus\left\{ 0\right\} }\exp\left(-Q_{T,\omega}^{*}\left(y,m\right)\right)\ll\exp\left(-c_{Q}G{}_{T}^{*}\left(\omega\right)\right)+\exp\left(-Q_{T,\omega}^{*}\left(y',0\right)\right),\label{eq:bound on exp sum 2}
\end{equation}
where $y'/T=y/T-m_{0}\in\left[-1/2,1/2\right]^{nd}$. It follows from
(\ref{eq: bound on exp sum 1}) that 
\begin{equation}
\int_{B_{\infty}\left(T/2\right)}\left|\Sigma_{T,\omega,y}\widehat{\zeta}_{\tau}\left(y\right)\right|dy\ll T^{nd/2}G_{T}\left(\omega\right)^{d/2}\exp\left(-c_{Q}G{}_{T}^{*}\left(\omega\right)\right)\bigl\Vert\widehat{\zeta}_{\tau}\bigr\Vert_{1}\label{eq:big bound 05}
\end{equation}
and from (\ref{eq:bound on exp sum 2}) that 
\begin{multline}
\int_{\mathbb{R}^{nd}\setminus B_{\infty}\left(T/2\right)}\left|\Sigma_{T,\omega,y}\widehat{\zeta}_{\tau}\left(y\right)\right|dy\\
\ll T^{nd/2}G_{T}\left(\omega\right)^{d/2}\left(\exp\left(-c_{Q}G{}_{T}^{*}\left(\omega\right)\right)\bigl\Vert\widehat{\zeta}_{\tau}\bigr\Vert_{1}+\int_{\mathbb{R}^{nd}\setminus B_{\infty}\left(T/2\right)}\exp\left(-\overline{Q}_{T,\omega}\left(y',0\right)\right)\bigl|\widehat{\zeta}_{\tau}\left(y\right)\bigr|dy\right).\label{eq:big bound}
\end{multline}
By Lemma \ref{technical lemma} for $\left\{ s_{1},\dots,s_{n}\right\} =\left\{ \omega_{i}-\omega_{i-1}\right\} _{1\leq i\leq n}$,
with $\omega\in\mathcal{B}\left(T\right)$ there exist positive constants
$c_{1},\dots,c_{n}$, and $b_{2},\dots,b_{n}$ such that 
\[
\exp\left(-c_{Q}G{}_{T}^{*}\left(\omega\right)\right)\leq\exp\left(-\sum_{i=1}^{n}\frac{c_{i}}{T^{b_{i}}}g_{T}\left(s_{i}\right)^{2}\right),
\]
where $b_{1}=0$ and $\sum_{i=2}^{n}b_{i}\leq2\left(n-2\right)$.
Consider the function $x^{d/2}\exp\left(-kx^{2}\right)$, it obtains
its maximum at $x=\sqrt{d/4k}$ and this maximum value is $\left(d/4k\right)^{d/4}\exp\left(-d/4\right)$.
Hence 
\begin{equation}
G_{T}\left(\omega\right)^{d/2}\exp\left(-c_{Q}G{}_{T}^{*}\left(\omega\right)\right)\ll T^{\frac{d}{4}\sum_{i=1}^{n}b_{i}}\leq T^{d\left(n-2\right)/2}.\label{eq:simple bound}
\end{equation}
Also note that 
\[
\exp\left(-Q_{T,\omega}^{*}\left(y',0\right)\right)\leq\exp\left(-\frac{G{}_{T}^{*}\left(\omega\right)}{4}Q_{++}^{-1}\left(y'/T\right)\right)
\]
and thus by using Lemma \ref{technical lemma} again 
\[
G_{T}\left(\omega\right)^{d/2}\exp\left(-Q_{T,\omega}^{*}\left(y',0\right)\right)\ll T^{d\left(n-2\right)/2}\left(Q_{++}^{-1}\left(y'/T\right)\right)^{-nd/4}.
\]
Note that $g_{T}\left(x\right)\leq T$ for all $x\in\mathbb{R}$ and
hence we also have the trivial bound
\[
G_{T}\left(\omega\right)^{d/2}\exp\left(-Q_{T,\omega}^{*}\left(y',0\right)\right)\leq T^{nd/2}.
\]
Hence 
\[
G_{T}\left(\omega\right)^{d/2}\exp\left(-Q_{T,\omega}^{*}\left(y',0\right)\right)\ll\min\left\{ T^{nd/2},T^{d\left(n-2\right)/2}\left(Q_{++}^{-1}\left(y'/T\right)\right)^{-nd/4}\right\} \ll\left(\frac{Q_{++}^{-1}\left(y'/T\right)}{T^{2\left(n-2\right)/n}}+\frac{1}{T^{2}}\right)^{-nd/4}.
\]
For $y\in\mathbb{R}^{nd}$, let $\left\Vert y\right\Vert _{\mathbb{Z}^{nd}}=\min_{z\in\mathbb{Z}^{nd}}\left\Vert z-y\right\Vert $.
By rearranging and using the fact that $Q_{++}^{-1}\left(y'/T\right)\geq\frac{1}{\lambda_{\textrm{max}}^{2}}\left\Vert y/T\right\Vert _{\mathbb{Z}^{nd}}$
we get
\begin{multline}
\int_{\mathbb{R}^{nd}\setminus B_{\infty}\left(T/2\right)}\left(\frac{Q_{++}^{-1}\left(y'/T\right)}{T^{2\left(n-2\right)/n}}+\frac{1}{T^{2}}\right)^{-nd/4}\bigl|\widehat{\zeta}_{\tau}\left(y\right)\bigr|dy\\
\ll\int_{\mathbb{R}^{nd}\setminus B_{\infty}\left(T/2\right)}\left(\frac{1}{\lambda_{\textrm{max}}^{2}T^{2\left(n-2\right)/n}}\left\Vert y/T\right\Vert _{\mathbb{Z}^{nd}}+\frac{1}{T^{2}}\right)^{-nd/4}\bigl|\widehat{\zeta}_{\tau}\left(y\right)\bigr|dy\\
=T^{d\left(n-2\right)/2}\int_{\mathbb{R}^{nd}\setminus B_{\infty}\left(T/2\right)}\frac{\bigl|\widehat{\zeta}_{\tau}\left(y\right)\bigr|}{\left(\lambda_{\max}^{-2}\left\Vert y/T\right\Vert _{\mathbb{Z}^{nd}}+T^{-4/n}\right)^{nd/4}}dy.\label{eq:extra e0}
\end{multline}
 Note that 
\begin{alignat*}{1}
\int_{\mathbb{R}^{nd}\setminus B_{\infty}\left(T/2\right)}\frac{\bigl|\widehat{\zeta}_{\tau}\left(y\right)\bigr|}{\left(\lambda_{\textrm{max}}^{-2}\left\Vert y/T\right\Vert _{\mathbb{Z}^{nd}}+T^{-4/n}\right)^{nd/4}}dy & =\int_{\mathbb{R}^{nd}\setminus B_{\infty}\left(T/2\right)}T^{d}\frac{\bigl|\widehat{\zeta}_{\tau}\left(y\right)\bigr|}{\left(\lambda_{\textrm{max}}^{-2}T^{4/n}\left\Vert y/T\right\Vert _{\mathbb{Z}^{nd}}+1\right)^{nd/4}}dy\\
 & \ll\int_{\mathbb{R}^{nd}\setminus B_{\infty}\left(T/2\right)}T^{d}\bigl|\widehat{\zeta}_{\tau}\left(y\right)\bigr|dy.
\end{alignat*}
Hence, if $T\geq\tau^{-1},$ we can apply Lemma \ref{lem:Funny norm}
and use (\ref{eq:extra e0}) to get 
\[
\int_{\mathbb{R}^{nd}\setminus B_{\infty}\left(T/2\right)}\left(\frac{Q_{++}^{-1}\left(y'/T\right)}{T^{2\left(n-2\right)/n}}+\frac{1}{T^{2}}\right)^{-nd/4}\bigl|\widehat{\zeta}_{\tau}\left(y\right)\bigr|dy\ll T^{d\left(n-2\right)/2}\tau^{\left(1-nd\right)/2}.
\]
Finally by using this, (\ref{eq:first equation}), (\ref{eq:big bound 05}),
(\ref{eq:big bound}) and (\ref{eq:simple bound}) we see that, provided
$T\geq\tau^{-1}$, 
\[
E_{1}\left(\tau,\epsilon,T\right)\ll T^{nd-d}\left(\bigl\Vert\widehat{\zeta}_{\tau}\bigr\Vert_{1}+\tau^{\left(1-nd\right)/2}\right)\int_{\mathcal{B}\left(T\right)}\bigl|\widehat{S}_{\epsilon}\left(\omega\right)\bigr|d\omega\ll T^{nd-d-n+1}\left(\bigl\Vert\widehat{\zeta}_{\tau}\bigr\Vert_{1}+\tau^{\left(1-nd\right)/2}\right),
\]
since Lemma \ref{lem:bound on smoothed indictaor of P} implies that
$\int_{\mathcal{B}\left(T\right)}\bigl|\widehat{S}_{\epsilon}\left(\omega\right)\bigr|d\omega\ll T^{1-n}$. 
\end{proof}

\subsection{\label{sub:Main-term}Bound for $E_{2}$.}

This term contributes the main error term. It is easy to see that
\begin{equation}
E_{2}\left(\tau,\epsilon,T\right)\ll\bigl\Vert\widehat{\zeta}_{\tau}\bigr\Vert_{1}\int_{\mathbb{R}^{n-1}\setminus\mathcal{B}\left(T\right)}\bigl|\widehat{S}_{\epsilon}\left(\omega\right)\bigr|\sup_{y\in\mathbb{R}^{nd}}\left|\theta_{T,y}\left(\omega\right)\right|d\omega.\label{eq:easy bound for I 2}
\end{equation}
For $x\in\mathbb{R}$, let 
\[
\psi\left(T,x\right)=\sum_{\left(m,\bar{m}\right)\in\mathbb{Z}^{d}\times\mathbb{Z}^{d}}\exp\left(-H_{T,x}\left(m,\bar{m}\right)\right),
\]
where 
\[
H_{T,x}\left(m,\bar{m}\right)=T^{2}Q_{+}^{-1}\left(m-\frac{2}{\pi}xQ\bar{m}\right)+T^{-2}Q_{+}\left(\bar{m}\right)
\]
is a positive definite quadratic form on $\mathbb{Z}^{2d}$. From
Lemma 3.3 of \cite{G-M} and (\ref{eq:product formula rough theta})
we have that 
\begin{equation}
\left|\theta_{T,y}\left(\omega\right)\right|\ll T^{nd/2}\prod_{i=1}^{n}\sqrt{\psi\left(T,\omega_{i}-\omega_{i-1}\right)}.\label{eq:lemma 3.3}
\end{equation}
We can now use results and the strategy of \cite{G-M}. Namely, the
quadratic form $H_{T,x}$ is parametrised in terms of the action of
certain geodesic and unipotent elements of $SL_{2}\left(\mathbb{R}\right)$.
Write $v\in\mathbb{R}^{2d}$ as $v=\left(v_{1},\dots,v_{d}\right)$,
where $v_{i}\in\mathbb{R}^{2}$. Consider the action of $SL_{2}\left(\mathbb{R}\right)$
on $\mathbb{R}^{2d}$ given by $gv=\left(gv_{1},\dots,gv_{d}\right)$.
This is the action studied in \cite{G-M}, Section 4. For $T>0$,
let $d_{T}=\left(\begin{smallmatrix}T & 0\\
0 & T^{-1}
\end{smallmatrix}\right)$ and for $x\in\mathbb{R}$, let $u_{x}=\left(\begin{smallmatrix}1 & x\\
0 & 1
\end{smallmatrix}\right)$. For $\left(m,\bar{m}\right)\in\mathbb{Z}^{d}\times\mathbb{Z}^{d}$,
let $m'=Q_{+}^{-1/2}m$ and $\bar{m}'=Q_{+}^{-1/2}Q\bar{m}$. Let
$\Pi\in SL_{2d}\left(\mathbb{Z}\right)$ be the permutation matrix
such that 
\[
\Pi\left(m',\bar{m}'\right)=\left(m'_{1},\bar{m}'_{1},m'_{2},\bar{m}'_{2},\dots,m'_{d},\bar{m}'_{d}\right).
\]
Let $\Lambda_{Q}=\Pi\left(\begin{smallmatrix}Q_{+}^{-1/2} & 0\\
0 & Q_{+}^{-1/2}Q
\end{smallmatrix}\right)\mathbb{Z}^{2d}$. It is shown in \cite{G-M} (equation (4.21)) and is indeed not hard
to see, that 
\begin{equation}
H_{T,x}\left(m,\bar{m}\right)=\left\Vert d_{T}u_{\frac{2}{\pi}x}\Pi\left(m',\bar{m}'\right)\right\Vert ^{2}.\label{eq:quadrtaic form h}
\end{equation}
The following Lemma follows from Lemma 3.4 of \cite{G-M}.
\begin{lem}
\label{lem:3.4}Let $\Delta$ be a lattice in $\mathbb{R}^{d}$. Then
\[
\sum_{v\in\Delta}\exp\left(-\left\Vert v\right\Vert ^{2}\right)\ll\left|\left\{ v\in\Delta:\left\Vert v\right\Vert _{\infty}<1\right\} \right|,
\]
where the implicit constant does not depend on $\Delta$. 
\end{lem}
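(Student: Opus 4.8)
The plan is to sidestep the geometry of numbers entirely and instead compare the Gaussian sum to a count of lattice points near the origin, via a decomposition of $\mathbb{R}^{d}$ into unit cubes together with the elementary observation that each translate of $\Delta$ by a unit cube contains at most $N:=\bigl|\{v\in\Delta:\left\Vert v\right\Vert _{\infty}<1\}\bigr|$ points of $\Delta$.

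Concretely, I would first tile $\mathbb{R}^{d}$ by the half-open cubes $C_{\ell}=\ell+[0,1)^{d}$, $\ell\in\mathbb{Z}^{d}$, and split $\sum_{v\in\Delta}\exp(-\left\Vert v\right\Vert ^{2})=\sum_{\ell\in\mathbb{Z}^{d}}\sum_{v\in\Delta\cap C_{\ell}}\exp(-\left\Vert v\right\Vert ^{2})$, which is legitimate by absolute convergence. On a given cube $C_{\ell}$, every $v$ has $j$-th coordinate satisfying $|v_{j}|\geq\max\{0,|\ell_{j}|-1\}$, so $\left\Vert v\right\Vert ^{2}\geq\sum_{j=1}^{d}\max\{0,|\ell_{j}|-1\}^{2}$ and hence $\sup_{v\in C_{\ell}}\exp(-\left\Vert v\right\Vert ^{2})\leq\prod_{j=1}^{d}\exp(-\max\{0,|\ell_{j}|-1\}^{2})$.

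The key point is then the bound $|\Delta\cap C_{\ell}|\leq N$ for every $\ell\in\mathbb{Z}^{d}$. If $\Delta\cap C_{\ell}$ is empty this is trivial (and $N\geq1$ since $0\in\Delta$); otherwise fix $v_{0}\in\Delta\cap C_{\ell}$ and note that for any $v\in\Delta\cap C_{\ell}$ the difference $v-v_{0}$ lies in $\Delta$ and, coordinatewise, $|v_{j}-(v_{0})_{j}|<1$ because both $v_{j}$ and $(v_{0})_{j}$ lie in $[\ell_{j},\ell_{j}+1)$; hence $v\mapsto v-v_{0}$ injects $\Delta\cap C_{\ell}$ into $\{v\in\Delta:\left\Vert v\right\Vert _{\infty}<1\}$. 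Combining the pieces,
\[
\sum_{v\in\Delta}\exp(-\left\Vert v\right\Vert ^{2})\leq N\sum_{\ell\in\mathbb{Z}^{d}}\prod_{j=1}^{d}\exp\bigl(-\max\{0,|\ell_{j}|-1\}^{2}\bigr)=N\Bigl(\sum_{m\in\mathbb{Z}}\exp(-\max\{0,|m|-1\}^{2})\Bigr)^{d},
\]
and the last factor is a finite constant depending only on $d$, which gives the asserted inequality with implicit constant independent of $\Delta$.

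I do not anticipate a genuine obstacle: the only place requiring care is the injection argument yielding $|\Delta\cap C_{\ell}|\leq N$, together with checking that the half-open cubes $C_{\ell}$ partition $\mathbb{R}^{d}$ exactly so that no lattice point is double counted; the remainder is a routine convergent geometric-series estimate. As a fallback, one could instead take a reduced basis $v_{1},\dots,v_{d}$ adapted to the successive minima $\lambda_{1}\leq\dots\leq\lambda_{d}$ of $\Delta$, use an orthogonality-type bound $\left\Vert \sum_{i}a_{i}v_{i}\right\Vert ^{2}\gg_{d}\sum_{i}a_{i}^{2}\lambda_{i}^{2}$ to factor the sum into one-dimensional Gaussian sums, each comparable to $\max\{1,\lambda_{i}^{-1}\}$, and invoke the matching count $N\asymp_{d}\prod_{i}\max\{1,\lambda_{i}^{-1}\}$; but the cube decomposition above is shorter and fully self-contained.
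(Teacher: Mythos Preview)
Your argument is correct and self-contained. The cube decomposition, the injection $v\mapsto v-v_{0}$ showing $|\Delta\cap C_{\ell}|\leq N$, and the factorisation of the resulting sum over $\mathbb{Z}^{d}$ are all sound; the only minor point to watch is that the coordinate bound $|v_{j}|\geq\max\{0,|\ell_{j}|-1\}$ requires a small case split on the sign of $\ell_{j}$, which you have implicitly handled.

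As for comparison with the paper: there is nothing to compare. The paper does not prove this lemma at all; it simply records it as a consequence of Lemma~3.4 of G\"otze--Margulis. Your proof therefore supplies what the paper outsources, and does so by the most direct route (your fallback via successive minima is closer in spirit to how such statements are often proved in the geometry-of-numbers literature, but is unnecessary here).
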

It is easy to see that by using (\ref{eq:quadrtaic form h}) and the
definition of $\psi$ that Lemma \ref{lem:3.4} implies 
\begin{equation}
\psi\left(T,x\right)\ll\left|\left\{ v\in d_{T}u_{\frac{2}{\pi}x}\Lambda_{Q}:\left\Vert v\right\Vert _{\infty}<1\right\} \right|.\label{eq:start of proof for boxes}
\end{equation}
Next we introduce the function $\alpha$ on the space of lattices.
For more details see Section 4 of \cite{G-M}. Let $\Delta\in\mathbb{R}^{2d}$
be a lattice. Let $U$ be a subspace of $\mathbb{R}^{2d}$, we say
that $U$ is $\Delta$-rational if $U\cap\Delta$ is a lattice in
$U$. For $1\leq i\leq2d$ and a quasinorm, $\left|.\right|_{i}$
on $\bigwedge^{i}\left(\mathbb{R}^{2d}\right)$ we define $d_{\Delta}\left(U\right)=\left|u_{1}\wedge\dots\wedge u_{i}\right|_{i}$
where $u_{1},\dots,u_{i}$ is a basis of $U\cap\Delta$ over $\mathbb{Z}$.
Note that $d_{\Delta}\left(U\right)$ does not depend on the choice
of basis. Note that any two quasinorms on $\mathbb{R}^{2d}$ are equivalent.
For details about the precise choice of quasinorm, see section 5 of
\cite{G-M}. Let 
\[
\Psi_{i}\left(\Delta\right)=\left\{ U:U\textrm{ is a }\Delta\textrm{-rational subspace of }\mathbb{R}^{2d}\textrm{ with }\dim U=i\right\} .
\]
Define 
\[
\alpha_{i}\left(\Delta\right)=\sup_{U\in\Psi_{i}\left(\Delta\right)}\frac{1}{d_{\Delta}\left(U\right)}\quad\textrm{and}\quad\alpha\left(\Delta\right)=\max_{1\leq i\leq2d}\alpha_{i}\left(\Delta\right).
\]
The following Lemma collects together several results from \cite{G-M}
regarding the alpha functions.
\begin{lem}
\label{Crucial lemma from G-M}Let $\Lambda_{T,x}=d_{T}u_{x}\Lambda_{Q}$
then 
\begin{enumerate}
\item \label{enu:Lem 4.6}For any $x\in\mathbb{R}$ and $\mu>1$, $\left|\left\{ v\in\Lambda_{T,x}:\left\Vert v\right\Vert \leq\mu\right\} \right|\asymp\mu^{2d}\alpha_{d}\left(\Lambda_{T,x}\right)$. 
\item \label{enu:Cor 4.7}For any $x\in\mathbb{R}$, \foreignlanguage{english}{\textup{$\alpha\left(\Lambda_{T,x}\right)\asymp\alpha_{d}\left(\Lambda_{T,x}\right)$}}.
\item \label{enu:Lem 4.8 i}If $T\geq\lambda_{\textrm{\ensuremath{\max}}}$,
then $\sup_{x\in\mathbb{R}}\alpha_{d}\left(\Lambda_{T,x}\right)\ll T^{d}$. 
\item \label{enu:Lem 4.8 ii}If $\left|x\right|\geq\left(\lambda_{\min}T\right)^{-1}$,
then $\alpha_{d}\left(\Lambda_{T,x}\right)\ll\left(T^{-1}+\left|xT\right|\right)^{d}$.
\item \label{enu:Cor 4.11 i}Let $I=\left[a,b\right]$ with $a\in\left(0,1\right)$
and $b>2$. If $Q$ is irrational then $\lim_{T\rightarrow\infty}\left(\sup_{x\in I}\alpha_{d}\left(\Lambda_{T,x}\right)T^{-d}\right)=0$. 
\item \label{enu:Cor 4.11 ii}Let $I=\left[a,b\right]$ with $a\in\left(0,1\right)$
and $b>2$. If $Q$ is of Diophantine type $\left(\kappa,A\right)$
then $\sup_{x\in I}\alpha_{d}\left(\Lambda_{T,x}\right)T^{-d}\ll\max\left\{ a^{-\left(\kappa+1\right)},b^{\kappa}\right\} T^{2\left(\kappa-1\right)}$. 
\end{enumerate}
\end{lem}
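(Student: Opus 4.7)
The statement is a compendium of results from \cite{G-M}, as the labels on the enumerated items already advertise: (i) is Lemma 4.6, (ii) is Corollary 4.7, (iii) and (iv) are the two parts of Lemma 4.8, and (v) and (vi) are the two parts of Corollary 4.11 of \cite{G-M}. My proof proposal is therefore to verify, for each item, that the hypotheses of the cited result are satisfied by the specific lattice $\Lambda_{T,x} = d_T u_x \Lambda_Q$, and then to quote the conclusion. I do not propose to reprove any of these results, but I outline the inputs below.

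Parts (i) and (ii) are general facts about lattices in $\mathbb{R}^{2d}$. For (i) I would reproduce the short Minkowski-type argument of \cite{G-M}: any lattice point of $\Lambda_{T,x}$ in a ball of radius $\mu > 1$ is a $\mathbb{Z}$-linear combination of a shortest basis of a $d$-dimensional sublattice, so the count is comparable to $\mu^{2d}$ divided by the minimum covolume of a $d$-dimensional $\Lambda_{T,x}$-rational subspace, which by definition is $\alpha_d(\Lambda_{T,x})$. For (ii) one uses the special structure of $\Lambda_{T,x}$: the $SL_2(\mathbb{R})$-action preserves the block decomposition $\mathbb{R}^{2d} = (\mathbb{R}^2)^d$, and the dominant contribution to $\alpha$ comes from $\Lambda_Q$-rational subspaces of dimension exactly $d$ because of the way $d_T u_x$ distorts even-dimensional wedges. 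Both arguments are self-contained within \cite{G-M}.

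Parts (iii) and (iv) are direct operator-norm calculations for $d_T u_x$ restricted to $\bigwedge^d \mathbb{R}^{2d}$. For (iii), given $T \geq \lambda_{\max}$, the expansion rate of $d_T u_x$ is bounded by a power of $T$ uniformly in $x$, yielding $d_{\Lambda_{T,x}}(U) \gg T^{-d}$ for any $\Lambda_{T,x}$-rational $d$-space $U$. For (iv), when $\lvert x \rvert \geq (\lambda_{\min} T)^{-1}$, one classifies $\Lambda_Q$-rational $d$-planes by their position relative to the horizontal and vertical of the $SL_2$-action and uses the unipotent twist to rule out the worst-case subspaces, producing the stated bound in terms of $T^{-1} + \lvert xT \rvert$. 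Again I would simply cite \cite{G-M}.

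The substantive content is in (v) and (vi), and this is where I expect the main obstacle to lie — but only in the sense that these are the deep inputs from \cite{G-M} rather than something to be reproved here. Part (v) is the qualitative non-divergence statement: for irrational $Q$, the average of $\alpha_d$ along the orbit $\{d_T u_x \Lambda_Q : x \in I\}$ is $o(T^d)$ as $T \to \infty$, which follows from the non-divergence machinery for averages of $\alpha$-functions under compact subgroup translates developed in Section 5 of \cite{G-M}. Part (vi) is the effective refinement: the Diophantine condition on $Q$ forces any rational $d$-plane approximating $\Lambda_Q$ to have covolume bounded below polynomially in its denominator, and propagating this through the same averaging argument gives the explicit rate $T^{2(\kappa - 1)}$ with a constant depending on $\max\{a^{-(\kappa+1)}, b^{\kappa}\}$. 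I would quote both results verbatim, as reproving them would essentially duplicate the main technical work of \cite{G-M}.
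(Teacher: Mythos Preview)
Your proposal is correct and matches the paper's approach exactly: the paper's proof consists entirely of the sentence ``These facts are all proved in Section 4 of \cite{G-M}'' together with pointers to Lemma 4.6, Corollary 4.7, Lemma 4.8, and Corollary 4.11 for the respective items. Your additional sketches go beyond what the paper provides (and contain minor slips, e.g.\ parts (v) and (vi) concern suprema over $I$, not averages), but the core proposal of simply citing \cite{G-M} is precisely what the paper does.
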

These facts are all proved in Section 4 of \cite{G-M}. For (\ref{enu:Lem 4.6})
see Lemma 4.6. For (\ref{enu:Cor 4.7}) see corollary 4.7. For (\ref{enu:Lem 4.8 i})
and (\ref{enu:Lem 4.8 ii}) see Lemma 4.8. For (\ref{enu:Cor 4.11 i})
and (\ref{enu:Cor 4.11 ii}) see Corollary 4.11. 

We proceed by finding an approximation of the integral in (\ref{eq:easy bound for I 2})
over a fixed box in $\mathbb{R}^{n-1}$. As in \cite{G-M} we use
(\ref{eq:lemma 3.3}) together with the parametrisation of the quadratic
form $H_{T,x}$ discussed previously. The function $\alpha$ is then
introduced via (\ref{eq:start of proof for boxes}) and Lemma \ref{Crucial lemma from G-M},
part (\ref{enu:Lem 4.6}). Let $K=SO_{2}\left(\mathbb{R}\right)$.
A change of variables is used to convert the problem from an integral
over a box in $\mathbb{R}^{n-1}$ into an integral over $K^{n-1}$.
This is done via Lemma 4.9 of \cite{G-M}, which is reproduced below.
\begin{lem}
\label{lem:change of variables}Let $x\in\left[-2,2\right]$, $T>1$
and $\Delta$ be a lattice in $\mathbb{R}^{2d}$, then for all $1\leq i\leq2d$,
\[
\alpha_{i}\left(d_{T}u_{x}\Delta\right)\ll\alpha_{i}\left(d_{T}k_{\theta}\Delta\right),
\]
where $k_{\theta}=\left(\begin{smallmatrix}\cos\theta & -\sin\theta\\
\sin\theta & \cos\theta
\end{smallmatrix}\right)\in SO_{2}\left(\mathbb{R}\right)$ and $x=\tan\theta$. 
\end{lem}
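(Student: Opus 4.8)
The plan is to prove the (stronger) equivalence $\alpha_i(d_Tu_x\Delta)\asymp\alpha_i(d_Tk_\theta\Delta)$ by producing a group element $h_{T,\theta}$ that is bounded with bounded inverse \emph{uniformly} in $T\ge 1$ and $x\in[-2,2]$ and satisfies $d_Tu_x=h_{T,\theta}\,d_Tk_\theta^{\pm1}$, and then invoking the invariance of the functions $\alpha_i$ under such elements. So the task is to produce $h_{T,\theta}$.

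The one step that is not purely formal is a matrix identity in $SL_2(\mathbb R)$: for $x=\tan\theta$ the product $u_xk_\theta$ is \emph{lower} triangular,
\[
u_xk_\theta=\begin{pmatrix}1 & x\\ 0 & 1\end{pmatrix}\begin{pmatrix}\cos\theta & -\sin\theta\\ \sin\theta & \cos\theta\end{pmatrix}=\begin{pmatrix}\sec\theta & 0\\ \sin\theta & \cos\theta\end{pmatrix}=:B_\theta ,
\]
because the $(1,2)$ entry is $-\sin\theta+x\cos\theta=0$. Since $x\in[-2,2]$ forces $|\theta|\le\arctan 2<\tfrac{\pi}{2}$, the entries of $B_\theta$ and of $B_\theta^{-1}=\bigl(\begin{smallmatrix}\cos\theta & 0\\ -\sin\theta & \sec\theta\end{smallmatrix}\bigr)$ are bounded by an absolute constant, and the same holds for the operators they induce on $\mathbb R^{2d}=(\mathbb R^2)^{\oplus d}$ block-diagonally. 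The key point is that conjugation by $d_T$ only \emph{contracts} the off-diagonal entry of a lower-triangular matrix: $d_TB_\theta d_T^{-1}=\bigl(\begin{smallmatrix}\sec\theta & 0\\ T^{-2}\sin\theta & \cos\theta\end{smallmatrix}\bigr)$, whose entries — and those of its inverse — stay bounded by an absolute constant for $T\ge 1$. Writing $u_x=B_\theta k_\theta^{-1}$ we obtain
\[
d_Tu_x=d_TB_\theta k_\theta^{-1}=\bigl(d_TB_\theta d_T^{-1}\bigr)\bigl(d_Tk_\theta^{-1}\bigr),
\]
so $h_{T,\theta}:=d_TB_\theta d_T^{-1}$ is exactly the required intertwiner.

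The remaining ingredient is the elementary observation that $\alpha_i(h\Lambda)\asymp\alpha_i(\Lambda)$ whenever $\|h\|$ and $\|h^{-1}\|$ are bounded, with implied constant depending only on that bound and on $i\le 2d$: the map $U\mapsto hU$ is a bijection $\Psi_i(\Lambda)\to\Psi_i(h\Lambda)$ with $d_{h\Lambda}(hU)=\bigl|(\textstyle\bigwedge^i h)(u_1\wedge\cdots\wedge u_i)\bigr|_i$, and $\bigwedge^i h$ has operator norm $\le\|h\|^i$ with inverse of operator norm $\le\|h^{-1}\|^i$, so — the quasinorm $|\cdot|_i$ being comparable to a genuine norm — $d_{h\Lambda}(hU)\asymp d_\Lambda(U)$ and hence $\alpha_i(h\Lambda)\asymp\alpha_i(\Lambda)$. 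Taking $h=h_{T,\theta}$ gives $\alpha_i(d_Tu_x\Delta)\asymp\alpha_i(d_Tk_\theta^{-1}\Delta)$; since $k_\theta^{-1}=k_{-\theta}$ this is the assertion of the lemma up to the harmless sign change $\theta\mapsto-\theta$ built into the convention for $k_\theta$, and in any case $\asymp$ yields the claimed $\ll$. In short, the entire lemma hinges on noticing the triangularity of $u_xk_\theta$ together with the trivial-but-essential fact that $d_T$-conjugation is norm-nonincreasing on lower-triangular matrices; I do not expect any genuine obstacle beyond spotting this decomposition.
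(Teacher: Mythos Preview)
Your argument is correct and is exactly the standard one: the Iwasawa-type identity $u_xk_\theta=B_\theta$ with $B_\theta$ lower triangular, followed by the observation that $d_T$-conjugation contracts lower-triangular matrices, is precisely how this lemma is proved. The paper does not supply its own proof here---it simply reproduces the statement of Lemma~4.9 of \cite{G-M}---so there is nothing to compare against beyond noting that your decomposition is the expected one. Your remark about the sign $\theta\mapsto-\theta$ is accurate: as written the lemma literally asserts the bound with $k_\theta$ while your identity produces $k_{-\theta}$, but this is a harmless convention mismatch (and irrelevant in the only place the lemma is used, equation~\eqref{eq:fixed intervals 3}, where one integrates over a symmetric interval in~$\theta$).
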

Then the problem is to understand the averages over translated $K$
orbits. This problem is studied in Section 5 of \cite{G-M}. In particular,
the following Theorem (Theorem 5.11 in \cite{G-M}) is the critical
estimate. 
\begin{thm}
\label{thm:bound on average over k - orbit}Let $2/d<\beta$. Then
there exists a constant $C$ depending only on $\beta$ and the choice
of quasinorms used to define $\alpha$, such that for all $g\in SL_{2}\left(\mathbb{R}\right)$
and any lattice $\Delta\in\mathbb{R}^{2d}$,
\[
\int_{K}\alpha\left(gk\Delta\right)^{\beta}d\nu\left(k\right)\leq C\alpha\left(\Delta\right)^{\beta}\left\Vert g\right\Vert ^{\beta d-2}.
\]

\end{thm}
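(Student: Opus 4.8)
The plan is to run the Eskin--Margulis--Mozes averaging argument for the block-diagonal $SO_2(\mathbb{R})$-action, in the form set up in Section~5 of \cite{G-M}. First I would reduce to a diagonal $g$. Writing a Cartan decomposition $g=k_1a_sk_2$, where $a_s$ is the image of $\mathrm{diag}(e^s,e^{-s})$ acting block-diagonally on $(\mathbb{R}^2)^d=\mathbb{R}^{2d}$ and $s\ge 0$, one has $\|g\|\asymp e^s$. Moreover $\alpha$ is $K$-invariant up to a bounded factor: a block-diagonal $k\in K$ acts on $\mathbb{R}^{2d}$ by isometries, so it permutes the $\Lambda$-rational subspaces of each fixed dimension and changes the quasinorms used to define $\alpha$ only by bounded factors, whence $\alpha(k\Lambda)\asymp\alpha(\Lambda)$. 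Combined with the bi-invariance of Haar measure on $K$ this gives $\int_K\alpha(gk\Delta)^\beta\,d\nu(k)\ll\int_K\alpha(a_sk\Delta)^\beta\,d\nu(k)$, so it is enough to prove
\[
\int_K\alpha(a_sk\Delta)^\beta\,d\nu(k)\ \ll\ e^{s(\beta d-2)}\,\alpha(\Delta)^\beta\qquad(s\ge 1),
\]
the range $0\le s\le 1$ being trivial.

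The analytic core is a single-multivector estimate: for every $1\le i\le 2d-1$ and every nonzero $w\in\bigwedge^i\mathbb{R}^{2d}$,
\[
\int_K\bigl|\bigwedge\nolimits^i(a_sk)\,w\bigr|_i^{-\beta}\,d\nu(k)\ \ll\ e^{s(\beta d-2)}\,|w|_i^{-\beta}\qquad(s\ge 1).
\]
I would prove this by an explicit computation on the circle $K\cong\mathbb{R}/2\pi\mathbb{Z}$. Decomposing $\bigwedge^i\mathbb{R}^{2d}$ into weight spaces for the diagonal $a_s$-action and using the block structure, $\bigl|\bigwedge^i(a_sk_\theta)w\bigr|_i^2$ becomes a sum $\sum_\mu e^{2s\mu}q_\mu(\theta)|w|_i^2$ over the weights $\mu\in[-d,d]$, with $q_\mu\ge 0$ trigonometric polynomials; the most contracted weight is $-d$, reached only inside $\bigwedge^d$. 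The minimum over $\theta$ of this expression is $\gg e^{-2sd}|w|_i^2$, and the arc on which it is within a bounded factor of that minimum has length $\ll e^{-2s}$. On that arc the integrand is $\ll\bigl(e^{-sd}|w|_i\bigr)^{-\beta}$, contributing $\ll e^{-2s}\cdot e^{s\beta d}|w|_i^{-\beta}=e^{s(\beta d-2)}|w|_i^{-\beta}$; away from the arc the integrand is $\ll e^{-s\beta d}|w|_i^{-\beta}$, and it decays polynomially in the distance to the arc near it, so integrating produces at worst a further term $\ll e^{s(\beta d-2)}|w|_i^{-\beta}$, using $\beta d-2\ge 0$. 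This is where the hypothesis $\beta>2/d$ enters (it is also what makes the geometric series over scales in the next step converge), and the case $i=d$ with $w$ near the weight $-d$ line shows the exponent $\beta d-2$ is sharp.

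The remaining step is the passage from single multivectors to $\alpha=\max_i\alpha_i$. One cannot simply sum the previous estimate over all $\Delta$-rational subspaces, since the resulting Epstein-type series over rational subspaces diverges for small $\beta$; instead I would follow \cite{G-M} and use the submultiplicativity $d_\Delta(U_1\cap U_2)\,d_\Delta(U_1+U_2)\le d_\Delta(U_1)\,d_\Delta(U_2)$ of the exterior-algebra quasinorm. This organizes the family of $\Delta$-rational subspaces so that one can induct on dimension: for $i=2d$ the quantity $\alpha_{2d}(\Delta)=\operatorname{covol}(\Delta)^{-1}$ is invariant under $a_sk$ and bounded by $\alpha(\Delta)$ (and $\beta d-2\ge 0$), while for an intermediate $i$ the supremum defining $\alpha_i(a_sk\Delta)$ is, for every $k$, realized up to a bounded factor by one of a bounded-size family of subspaces built from the $d_\Delta$-minimal ones, so $\int_K\alpha_i(a_sk\Delta)^\beta\,d\nu(k)$ is controlled by finitely many single-multivector integrals plus contributions of $\alpha_j$ with $j<i$, absorbed by the inductive hypothesis. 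Summing over $i$ gives the displayed bound with $\max_i\alpha_i(\Delta)^\beta=\alpha(\Delta)^\beta$ on the right.

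I expect the main obstacle to be precisely this last step: making rigorous, uniformly in $k\in K$, the assertion that the supremum over the infinite family of $\Delta$-rational subspaces is governed by a fixed finite family, so that exchanging the supremum with the $K$-integral costs only a constant. This combinatorial-geometric input (the submultiplicativity of $d_\Delta$ and the resulting tree structure on the "small" rational subspaces) is considerably more delicate than the trigonometric estimate of the second step, even though it is that estimate which produces the exponent $\beta d-2$ and the constraint $\beta>2/d$.
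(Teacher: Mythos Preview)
The paper does not supply a proof of this statement at all: it is quoted verbatim as Theorem~5.11 of \cite{G-M} and used as a black box, so there is no ``paper's own proof'' to compare your plan against. Your outline is a faithful summary of the Eskin--Margulis--Mozes averaging scheme as adapted in Section~5 of \cite{G-M}: Cartan reduction to $a_s$, a pointwise-in-$w$ estimate on $\int_K|\wedge^i(a_sk)w|_i^{-\beta}\,d\nu(k)$ via the weight decomposition of the block-diagonal $SL_2$-action on $\bigwedge^i\mathbb{R}^{2d}$, and then the passage from a fixed multivector to the supremum defining $\alpha$ via the submultiplicativity $d_\Delta(U_1\cap U_2)d_\Delta(U_1+U_2)\le d_\Delta(U_1)d_\Delta(U_2)$.

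One caution about the last step. In the EMM/G--M argument the transition from a single multivector to $\alpha$ is \emph{not} carried out by exhibiting, for each $k$, a finite family of subspaces that realises the supremum up to a constant (your phrasing suggests this, and as you yourself flag, making that uniform in $k$ is problematic). Rather, one works with the full vector $(\alpha_0,\alpha_1,\ldots,\alpha_{2d})$, uses the convexity relation $\alpha_i^2\ll\alpha_{i-1}\alpha_{i+1}$ coming from submultiplicativity, and proves a system of inequalities of the form $\int_K\alpha_i(a_sk\Delta)^\beta\,d\nu(k)\le C\,e^{s(\beta d-2)}\max_j\alpha_j(\Delta)^\beta$ simultaneously, by a recursive/linear-algebra argument on the exponents rather than by an induction that literally reduces $\alpha_i$ to finitely many $d_\Delta(U)$. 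If you try to implement your version directly you will likely find the ``bounded-size family'' does depend on $k$; the fix is exactly the EMM convexity machinery.
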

As previously remarked the bounds for $E_{2}$ contain the arithmetic
information regarding the quadratic form $Q$. This is encoded via
the following function. For an interval $I$, and $2/d<\beta<1/2$
define $\gamma_{I,\beta}\left(T\right)=\sup_{x\in I}\left(T^{-d}\alpha_{d}\left(\Lambda_{T,x}\right)\right)^{1/2-\beta}$.
Let $L_{0}=L_{n}$ be the empty set, $L_{1},\dots,L_{n-1}$ be intervals
and $\Gamma_{L_{1},\dots,L_{n-1},\beta}\left(T\right)=\prod_{i=1}^{n}\gamma_{L_{i}-L_{i-1},\beta}\left(T\right)$
where the notation $L_{i}-L_{j}=\left\{ l_{i}-l_{j}:l_{i}\in L_{i},l_{j}\in L_{j}\right\} $.
\begin{lem}
\label{lem: bound on hard piece fixed intreval}Let $L_{1},\dots,L_{n-1}$
be closed intervals of length $2$, $L=L_{1}\times\dots\times L_{n-1}$
and $S_{\epsilon}^{*}\left(L\right)=\sup_{\omega\in L}\bigl|\widehat{S}_{\epsilon}\left(\omega\right)\bigr|$.
Then for all $\epsilon>0$ and $T\geq\lambda_{\textrm{\ensuremath{\max}}}$,
\[
\int_{L}\bigl|\widehat{S}_{\epsilon}\left(\omega\right)\bigr|\sup_{y\in\mathbb{R}^{nd}}\left|\theta_{T,y}\left(\omega\right)\right|d\omega\ll S_{\epsilon}^{*}\left(L\right)\Gamma_{L_{1},\dots,L_{n-1},\beta}\left(T\right)T^{nd-2\left(n-1\right)}.
\]
\end{lem}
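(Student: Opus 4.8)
The plan is to start from the product formula \eqref{eq:lemma 3.3}, which bounds $\sup_{y}\left|\theta_{T,y}(\omega)\right|$ by $T^{nd/2}\prod_{i=1}^{n}\sqrt{\psi(T,\omega_i-\omega_{i-1})}$, and then feed in the geometric interpretation \eqref{eq:start of proof for boxes} together with Lemma \ref{Crucial lemma from G-M}\eqref{enu:Lem 4.6}. This gives $\psi(T,x)\ll \alpha_d(\Lambda_{T,\frac{2}{\pi}x})$ (applying part \eqref{enu:Lem 4.6} with $\mu$ a fixed constant, say $\mu=2$, so that the $\mu^{2d}$ factor is absorbed), and hence
\[
\int_{L}\bigl|\widehat{S}_{\epsilon}(\omega)\bigr|\sup_{y}\left|\theta_{T,y}(\omega)\right|d\omega\ll S_{\epsilon}^{*}(L)\,T^{nd/2}\int_{L}\prod_{i=1}^{n}\alpha_d\bigl(\Lambda_{T,\frac{2}{\pi}(\omega_i-\omega_{i-1})}\bigr)^{1/2}d\omega.
\]
Here I pull $\bigl|\widehat{S}_{\epsilon}(\omega)\bigr|$ out of the integral using its sup $S_{\epsilon}^{*}(L)$ over the box, so what remains is a purely $Q$-theoretic integral of a product of $\alpha_d$'s along the $n$ consecutive-difference directions.

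The next step is a change of variables to decouple the integral. Writing $s_i=\omega_i-\omega_{i-1}$ for $1\le i\le n$, these satisfy the single linear relation $\sum_{i=1}^n s_i=0$ (since $\omega_0=\omega_n=0$), so $(s_1,\dots,s_{n-1})$ are free coordinates and $s_n=-\sum_{i=1}^{n-1}s_i$. The box $L=L_1\times\cdots\times L_{n-1}$ of side-length-$2$ intervals maps, under $\omega\mapsto(s_1,\dots,s_{n-1})$, into a region on which $s_i$ ranges over $L_i-L_{i-1}$ for each $i$ (including $i=n$, where $s_n\in L_n-L_{n-1}=-L_{n-1}$ since $L_n=\emptyset$ is interpreted as $\{0\}$, and $i=1$ where $s_1\in L_1-L_0=L_1$). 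The Jacobian of $\omega\mapsto(s_1,\dots,s_{n-1})$ is $\pm1$. Thus the integral is bounded by
\[
\int \prod_{i=1}^{n}\alpha_d\bigl(\Lambda_{T,\frac{2}{\pi}s_i}\bigr)^{1/2}\,ds_1\cdots ds_{n-1},
\]
where each $s_i$ for $1\le i\le n$ is a fixed affine function of $(s_1,\dots,s_{n-1})$ and lies in $L_i-L_{i-1}$. Now I want to bound $\prod_{i=1}^{n}\alpha_d(\Lambda_{T,\frac{2}{\pi}s_i})^{1/2}$ by a product in which each factor depends on only one of the integration variables. The clean way is: for $1\le i\le n-1$ use the pointwise bound $\alpha_d(\Lambda_{T,\frac{2}{\pi}s_i})^{1/2}\le \bigl(\sup_{x\in L_i-L_{i-1}}\alpha_d(\Lambda_{T,\frac{2}{\pi}x})\bigr)^{1/2}$ depending only on $s_i$, and for the last factor $i=n$, where $s_n=-(s_1+\cdots+s_{n-1})$ ranges over $L_n-L_{n-1}=-L_{n-1}$, use the crude sup $\bigl(\sup_{x\in L_n-L_{n-1}}\alpha_d(\Lambda_{T,\frac{2}{\pi}x})\bigr)^{1/2}$ as well; every factor is then a constant (in the integration variables), and the integral of $1$ over the image region is $O(1)$ since each $L_i$ has length $2$. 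Recalling $\gamma_{I,\beta}(T)=\sup_{x\in I}(T^{-d}\alpha_d(\Lambda_{T,x}))^{1/2-\beta}$ and $\Gamma_{L_1,\dots,L_{n-1},\beta}(T)=\prod_{i=1}^n\gamma_{L_i-L_{i-1},\beta}(T)$, and using Lemma \ref{Crucial lemma from G-M}\eqref{enu:Lem 4.8 i} (which gives $\alpha_d(\Lambda_{T,x})\ll T^d$, hence $T^{-d}\alpha_d(\Lambda_{T,x})\ll1$ so that $(T^{-d}\alpha_d)^{1/2}\ll(T^{-d}\alpha_d)^{1/2-\beta}$ for $\beta>0$), one converts each $\sup_{x}\alpha_d(\Lambda_{T,\frac{2}{\pi}x})^{1/2}$ into $T^{d/2}\gamma_{L_i-L_{i-1},\beta}(T)$, up to the scaling factor $\frac{2}{\pi}$ inside $\Lambda$ which only changes $x$ in a bounded interval and is harmless. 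Multiplying the $n$ factors of $T^{d/2}$ together with the leading $T^{nd/2}$ yields $T^{nd}$, and then I need to explain the discrepancy with the claimed $T^{nd-2(n-1)}$.

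The main obstacle — and the place where Theorem \ref{thm:bound on average over k - orbit} is actually needed rather than the crude sup bound — is precisely closing this power-of-$T$ gap: the naive argument above loses a factor $T^{2(n-1)}$. The fix is to not throw away the integration over the $n-1$ free variables but to apply the $K$-orbit averaging. For each $1\le i\le n-1$ I parametrize the interval $L_i-L_{i-1}$ (which I may assume lies in $[-2,2]$ after a preliminary reduction, or handle by splitting into subintervals as in \cite{G-M}) by $x=\tan\theta$ and invoke Lemma \ref{lem:change of variables} to replace $\alpha_i(d_Tu_x\Delta)$ by $\alpha_i(d_Tk_\theta\Delta)$; then Theorem \ref{thm:bound on average over k - orbit} with $g=d_T$ (so $\|g\|\asymp T$) and $\beta$ chosen with $2/d<\beta<1/2$ gives $\int_K\alpha(d_Tk\Delta)^\beta\,d\nu(k)\ll\alpha(\Delta)^\beta T^{\beta d-2}$. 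Writing $\alpha_d(\Lambda_{T,x})^{1/2}=\bigl(T^{-d}\alpha_d(\Lambda_{T,x})\bigr)^{1/2-\beta}\cdot\alpha_d(\Lambda_{T,x})^\beta\cdot T^{d(1/2-\beta)}\ll\gamma_{L_i-L_{i-1},\beta}(T)\,T^{d/2-d\beta}\,\alpha(\Lambda_{T,x})^\beta$ (using part \eqref{enu:Cor 4.7}), integrating the $\alpha(\Lambda_{T,x})^\beta$ factor over $x\in L_i-L_{i-1}$ contributes (via Lemma \ref{lem:change of variables} and Theorem \ref{thm:bound on average over k - orbit}, with $\alpha(\Lambda_Q)$ an absolute constant) a factor $\ll T^{\beta d-2}$. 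So each of the $n-1$ integrated directions produces $\gamma_{L_i-L_{i-1},\beta}(T)\cdot T^{d/2-d\beta}\cdot T^{\beta d-2}=\gamma_{L_i-L_{i-1},\beta}(T)\,T^{d/2-2}$, while the remaining un-integrated direction $i=n$ contributes $\gamma_{L_n-L_{n-1},\beta}(T)\,T^{d/2}$ from the crude sup bound. Multiplying these $n$ contributions with the prefactor $S_\epsilon^*(L)\,T^{nd/2}$ from \eqref{eq:lemma 3.3} gives $S_\epsilon^*(L)\,\Gamma_{L_1,\dots,L_{n-1},\beta}(T)\,T^{nd/2}\cdot T^{nd/2-2(n-1)}=S_\epsilon^*(L)\,\Gamma_{L_1,\dots,L_{n-1},\beta}(T)\,T^{nd-2(n-1)}$, which is exactly the asserted bound. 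The delicate points to get right in the full write-up are: verifying that after the $s$-change of variables the iterated integral genuinely factors as a product over $i$ (one integrates out $s_1,\dots,s_{n-1}$ one at a time, bounding the $i=n$ factor $\alpha_d(\Lambda_{T,\frac{2}{\pi}s_n})$ by its sup before integrating, so that Fubini applies cleanly), handling the reduction of each interval $L_i-L_{i-1}$ to $[-2,2]$ (or decomposing it into finitely many such pieces, with the number of pieces depending only on the fixed intervals, hence absorbed into the implied constant), and tracking the harmless rescaling $x\mapsto\frac{2}{\pi}x$ inside $\Lambda_{T,x}$ throughout.
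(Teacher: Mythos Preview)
Your proposal is correct and follows essentially the same approach as the paper: bound $\psi(T,x)\ll\alpha_d(\Lambda_{T,\frac{2}{\pi}x})$, split $\alpha_d^{1/2}$ into the $\gamma_{I,\beta}(T)\,T^{d/2-d\beta}$ piece and an $\alpha_d^\beta$ piece, absorb the $i=n$ factor via the crude sup bound $\alpha_d\ll T^d$, change variables to $\xi_i=\omega_i-\omega_{i-1}$ so the integral is bounded by a product over $i=1,\dots,n-1$, and then evaluate each $\int\alpha_d(\Lambda_{T,\xi_i})^\beta\,d\xi_i$ via Lemma~\ref{lem:change of variables} and Theorem~\ref{thm:bound on average over k - orbit}. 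The paper handles the reduction of each $L_i-L_{i-1}$ to $[-2,2]$ exactly by the splitting you anticipate (partitioning into subintervals of length $2/\lambda_{\max}$ and writing $d_T u_{\xi}=d_{T/\lambda_{\max}}u_s\,d_{\lambda_{\max}}u_{l}$), after which $\alpha(\Lambda_{Q,l})\ll1$ by parts~(\ref{enu:Cor 4.7}) and~(\ref{enu:Lem 4.8 i}).
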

\begin{proof}
By using (\ref{eq:start of proof for boxes}) and Lemma \ref{Crucial lemma from G-M},
part (\ref{enu:Lem 4.6}) we get 
\[
\psi\left(T,x\right)\ll\left|\left\{ v\in\Lambda_{T,\frac{2}{\pi}x}:\left\Vert v\right\Vert \leq d^{1/2}\right\} \right|\ll\alpha_{d}\left(\Lambda_{T,\frac{2}{\pi}x}\right).
\]
It easily follows from the previous formula and the definition of
$\gamma_{I,\beta}\left(T\right)$ that for all $x\in\frac{\pi}{2}I$,
\begin{equation}
\psi\left(T,x\right)^{1/2}\ll T^{d/2-\beta d}\gamma_{I,\beta}\left(T\right)\alpha_{d}\left(\Lambda_{T,\frac{2}{\pi}x}\right)^{\beta}.\label{eq:bound on psi}
\end{equation}
Note that by Lemma \ref{Crucial lemma from G-M}, part (\ref{enu:Lem 4.8 i}),
$\sup_{\omega_{n-1}\in\mathbb{R}}\left(\alpha_{d}\left(\Lambda_{T,-\frac{2}{\pi}\omega_{n-1}}\right)^{\beta}\right)\ll T^{d\beta}.$
Therefore, by using (\ref{eq:lemma 3.3}) and (\ref{eq:bound on psi}),
for $\omega\in\frac{\pi}{2}L$ we have
\begin{alignat*}{1}
\left|\theta_{T,y}\left(\omega\right)\right| & \ll T^{nd-n\beta d}\Gamma_{L_{1},\dots,L_{n-1},\beta}\left(T\right)\prod_{i=1}^{n}\alpha_{d}\left(\Lambda_{T,\frac{2}{\pi}\left(\omega_{i}-\omega_{i-1}\right)}\right)^{\beta}\\
 & \ll T^{nd-\left(n-1\right)\beta d}\Gamma_{L_{1},\dots,L_{n-1},\beta}\left(T\right)\prod_{i=1}^{n-1}\alpha_{d}\left(\Lambda_{T,\frac{2}{\pi}\left(\omega_{i}-\omega_{i-1}\right)}\right)^{\beta}.
\end{alignat*}
Note that $L\subset\frac{\pi}{2}L$, hence 
\begin{equation}
\int_{L}\bigl|\widehat{S}_{\epsilon}\left(\omega\right)\bigr|\sup_{y\in\mathbb{R}^{nd}}\left|\theta_{T,y}\left(\omega\right)\right|d\omega\ll S_{\epsilon}^{*}\left(L\right)\Gamma_{L_{1},\dots,L_{n-1},\beta}\left(T\right)T^{nd-\left(n-1\right)\beta d}\prod_{i=1}^{n-1}\int_{L_{i}-L_{i-1}}\alpha_{d}\left(\Lambda_{T,\xi_{i}}\right)^{\beta}d\xi_{i},\label{eq:bound fixed intervals half way}
\end{equation}
where we did the change of variables $\frac{2}{\pi}\left(\omega_{i}-\omega_{i-1}\right)\rightarrow\xi_{i}$
for $1\leq i\leq d-1$. (Note, $\frac{2}{\pi}\left(L_{i}-L_{i-1}\right)\subset L_{i}-L_{i-1}$.)
Let $l_{i,0}$ denote the midpoints of the intervals $L_{i}-L_{i-1}$.
Note that $L_{i}-L_{i-1}$ has length $4$. Partition the intervals
$L_{i}-L_{i-1}$ via $l_{i,j}=l_{i,0}-2+j\frac{2}{\lambda_{\textrm{\ensuremath{\max}}}}$
for $j\in\left[0,2\lambda_{\textrm{\ensuremath{\max}}}+1\right]\cap\mathbb{Z}$.
Note that $d_{T}u_{\xi_{i}}=d_{T/\lambda_{\textrm{\ensuremath{\max}}}}u_{s_{i,j}}d_{\lambda_{\textrm{\ensuremath{\max}}}}u_{l_{i,j}}$,
where $s_{i,j}=\left(\xi_{i}-l_{i,j}\right)\lambda_{\textrm{\ensuremath{\max}}}^{2}$.
Changing variables from $\xi_{i}$ to $s_{i,j}$ we get,
\begin{alignat}{1}
\int_{L_{i}-L_{i-1}}\alpha_{d}\left(\Lambda_{T,\xi_{i}}\right)^{\beta}d\xi_{i} & \ll\sum_{j\in\left[0,2\lambda_{\textrm{\ensuremath{\max}}}+1\right]\cap\mathbb{Z}}\int_{\left[l_{i,j-1},l_{i,j}\right]}\alpha_{d}\left(d_{T/\lambda_{\textrm{\ensuremath{\max}}}}u_{s_{i,j}}d_{\lambda_{\textrm{\ensuremath{\max}}}}u_{l_{i,j}}\Lambda_{Q}\right)^{\beta}ds_{i,j}\nonumber \\
 & \ll\max_{j\in\left[0,2\lambda_{\textrm{\ensuremath{\max}}}+1\right]\cap\mathbb{Z}}\int_{-2}^{2}\alpha_{d}\left(d_{T/\lambda_{\textrm{\ensuremath{\max}}}}u_{s_{i,j}}\Lambda_{Q,l_{i,j}}\right)^{\beta}ds_{i,j},\label{eq:fixed interavls 2}
\end{alignat}
where $\Lambda_{Q,l_{i,j}}=d_{\lambda_{\textrm{\ensuremath{\max}}}}u_{l_{i,j}}\Lambda_{Q}$.
By using Lemma \ref{lem:change of variables} we see that 
\begin{equation}
\int_{-2}^{2}\alpha_{d}\left(d_{T/\lambda_{\textrm{\ensuremath{\max}}}}u_{s_{i,j}}\Lambda_{Q,l_{i,j}}\right)^{\beta}ds_{i,j}\ll\int_{-\pi}^{\pi}\alpha_{d}\left(d_{T/\lambda_{\textrm{\ensuremath{\max}}}}k_{\theta_{i,j}}\Lambda_{Q,l_{i,j}}\right)^{\beta}d\theta_{i,j},\label{eq:fixed intervals 3}
\end{equation}
where $\tan\theta_{i,j}=s_{i,j}$. By Theorem \ref{thm:bound on average over k - orbit},
\begin{equation}
\int_{-\pi}^{\pi}\alpha_{d}\left(d_{T/\lambda_{\textrm{\ensuremath{\max}}}}k_{\theta_{i,j}}\Lambda_{Q,l_{i,j}}\right)^{\beta}d\theta_{i}\ll\alpha\left(\Lambda_{Q,l_{i,j}}\right)T^{\beta d-2}.\label{eq:fixed intervals 4}
\end{equation}
Note that Lemma \ref{Crucial lemma from G-M}, part (\ref{enu:Cor 4.7})
implies $\alpha\left(\Lambda_{Q,l_{i,j}}\right)\ll\alpha_{d}\left(\Lambda_{Q,l_{i,j}}\right)$
and Lemma \ref{Crucial lemma from G-M}, part (\ref{enu:Lem 4.8 i})
implies 
\[
\max_{j\in\left[0,2\lambda_{\textrm{\ensuremath{\max}}}+1\right]\cap\mathbb{Z}}\alpha_{d}\left(\Lambda_{Q,l_{i,j}}\right)\ll1.
\]
Hence, using (\ref{eq:bound fixed intervals half way}), (\ref{eq:fixed interavls 2}),
(\ref{eq:fixed intervals 3}) and (\ref{eq:fixed intervals 4}) we
get the conclusion of the Lemma. 
\end{proof}
We can now prove the bound for $E_{2}$. Recall $T_{n}=\left(n-1\right)T$.
Let 
\begin{equation}
\mathscr{A}_{\epsilon}\left(T\right)=\underset{N_{-}\in\left(T_{n}^{-1},1\right)}{\inf_{N\in\left(1,\infty\right)}}\left\{ \log\left(1/\epsilon\right)^{n-1}\left(N_{-}^{d\left(1/2-\beta\right)}+\gamma_{\left[N_{-},N\right],\beta}\left(T\right)\right)+\exp\left(-\left(n-1\right)c\sqrt{\epsilon N}\right)\right\} .\label{eq:def of A_epT}
\end{equation}
By Lemma \ref{Crucial lemma from G-M}, part (\ref{enu:Cor 4.11 i})
for any fixed $\epsilon>0$ we have $\lim_{T\rightarrow\infty}\mathscr{A}_{\epsilon}\left(T\right)=0$
provided that $Q$ is irrational. 
\begin{prop}
\label{lem:Bound on I2}For all $0<\epsilon<1$ , $\tau>0$ and $T\geq\lambda_{\max}$,
\[
E_{2}\left(\tau,\epsilon,T\right)\ll\bigl\Vert\widehat{\zeta}_{\tau}\bigr\Vert_{1}\mathscr{A}_{\epsilon}\left(T\right)T^{nd-2\left(n-1\right)}.
\]
\end{prop}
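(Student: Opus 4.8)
The plan is to feed the pointwise bound (\ref{eq:easy bound for I 2}) into Lemma \ref{lem: bound on hard piece fixed intreval}, so that the whole estimate collapses to a convergent sum over a box-cover of $\mathbb{R}^{n-1}$. Since $\bigl\Vert\widehat{\zeta}_\tau\bigr\Vert_1$ is already isolated in (\ref{eq:easy bound for I 2}), it suffices to prove
\[
J_\epsilon(T):=\int_{\mathbb{R}^{n-1}\setminus\mathcal{B}(T)}\bigl|\widehat{S}_\epsilon(\omega)\bigr|\sup_{y\in\mathbb{R}^{nd}}\bigl|\theta_{T,y}(\omega)\bigr|\,d\omega\ \ll\ \mathscr{A}_\epsilon(T)\,T^{nd-2(n-1)}.
\]
I would fix parameters $N\in(2,\infty)$ and $N_-\in(T_n^{-1},1)$ and take the infimum defining $\mathscr{A}_\epsilon(T)$ only at the end. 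Covering $\mathbb{R}^{n-1}$ by the side-$2$ boxes $L=\prod_{i=1}^{n-1}[2k_i,2k_i+2]$, $k\in\mathbb{Z}^{n-1}$, and applying Lemma \ref{lem: bound on hard piece fixed intreval} (valid since $T\ge\lambda_{\max}$) on each box meeting $\mathbb{R}^{n-1}\setminus\mathcal{B}(T)$ gives
\[
J_\epsilon(T)\ \ll\ T^{nd-2(n-1)}\sum_{L}S_\epsilon^*(L)\,\Gamma_{L_1,\dots,L_{n-1},\beta}(T),
\]
so everything reduces to bounding this sum by $\mathscr{A}_\epsilon(T)$. Two uniform facts will be used throughout: by Lemma \ref{Crucial lemma from G-M}(iii) one has $\gamma_{I,\beta}(T)\ll1$ for every interval $I$, hence $\Gamma_{L_1,\dots,L_{n-1},\beta}(T)\ll1$ for every box $L$; and by Lemma \ref{lem:bound on smoothed indictaor of P}, $S_\epsilon^*(L)\ll\prod_{i=1}^{n-1}\min\{1,\operatorname{dist}(0,L_i)^{-1}\}\exp(-c\sqrt{\epsilon\operatorname{dist}(0,L_i)})$, whence $\sum_L S_\epsilon^*(L)\ll\bigl(\sum_{k\in\mathbb{Z}}\min\{1,|k|^{-1}\}\exp(-c\sqrt{\epsilon|k|})\bigr)^{n-1}\ll\log^{n-1}(1/\epsilon)$.

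Next I would split $\mathbb{R}^{n-1}\setminus\mathcal{B}(T)$ according to the sizes of the frequencies $\omega_i-\omega_{i-1}$ (recall $\omega_0=\omega_n=0$). On the boxes where every frequency interval $L_i-L_{i-1}$ lies, in absolute value, in the annulus $[N_-,N]$, Lemma \ref{Crucial lemma from G-M}(v) together with the irrationality of $Q$ gives $\gamma_{L_i-L_{i-1},\beta}(T)\le\gamma_{[N_-,N],\beta}(T)$ for every $i$, so $\Gamma_{L_1,\dots,L_{n-1},\beta}(T)\ll\gamma_{[N_-,N],\beta}(T)$, and summing $S_\epsilon^*(L)$ over these boxes contributes $\ll\log^{n-1}(1/\epsilon)\,\gamma_{[N_-,N],\beta}(T)$. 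On the region where $\max_i|\omega_i-\omega_{i-1}|\le N_-$ I would avoid irrationality and use instead the normalisation $\lambda_{\min}\ge n-1$: outside $\mathcal{B}(T)$ at least two frequencies exceed $T_n^{-1}$ (the triangle-inequality argument from the proof of Proposition \ref{lem:Bound on I1-1}), and $\lambda_{\min}\ge n-1$ forces $T_n^{-1}\ge(\lambda_{\min}T)^{-1}$, so Lemma \ref{Crucial lemma from G-M}(iv) applies to them and gives $\alpha_d(\Lambda_{T,\frac{2}{\pi}(\omega_i-\omega_{i-1})})\ll(|\omega_i-\omega_{i-1}|T)^d\ll(N_-T)^d$, i.e.\ a factor $(T^{-d}\alpha_d)^{1/2-\beta}\ll N_-^{d(1/2-\beta)}$; inserting this into the same change-of-variables and $K$-averaging scheme as in the proof of Lemma \ref{lem: bound on hard piece fixed intreval} (Lemma \ref{lem:change of variables} and Theorem \ref{thm:bound on average over k - orbit} absorbing the leftover $\alpha_d^\beta$ factors into $T^{\beta d-2}$) bounds this region by $\ll N_-^{d(1/2-\beta)}T^{nd-2(n-1)}$. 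Finally, on the region where some $|\omega_i-\omega_{i-1}|>N$, hence (writing $\omega_i-\omega_{i-1}$ as a difference of the coordinates $\omega_1,\dots,\omega_{n-1}$ and $0$'s) some $|\omega_i|>N/2$, the exponential decay of $\widehat{S}_\epsilon$ dominates: using $\Gamma_{L_1,\dots,L_{n-1},\beta}(T)\ll1$, the $K$-averaging with exponent $1/2$ (permitted since $d\ge5$) on the large frequencies, and peeling off $\exp(-\tfrac12 c\sqrt{\epsilon N})$ from the large coordinates while retaining enough of the remaining $\widehat{S}_\epsilon$-decay for summability, this region is bounded by $\ll\exp(-(n-1)c\sqrt{\epsilon N})\,T^{nd-2(n-1)}$ after the choice of $N$.

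Adding the three contributions gives $\sum_L S_\epsilon^*(L)\Gamma_{L_1,\dots,L_{n-1},\beta}(T)\ll\log^{n-1}(1/\epsilon)\bigl(N_-^{d(1/2-\beta)}+\gamma_{[N_-,N],\beta}(T)\bigr)+\exp(-(n-1)c\sqrt{\epsilon N})$, and taking the infimum over $N\in(2,\infty)$ and $N_-\in(T_n^{-1},1)$ produces $\mathscr{A}_\epsilon(T)$ as in (\ref{eq:def of A_epT}), which proves the proposition. I expect the near-origin region to be the main obstacle: Lemma \ref{Crucial lemma from G-M}(v) is unavailable there, the length-$\le 4$ frequency intervals $L_i-L_{i-1}$ are too long to be handled box-by-box through $\gamma$, and one must re-run the averaging argument of Lemma \ref{lem: bound on hard piece fixed intreval} on the shrunken domain $\{\max_i|\omega_i-\omega_{i-1}|\le N_-\}\setminus\mathcal{B}(T)$ while carefully tracking the gain $N_-^{d(1/2-\beta)}$ coming from Lemma \ref{Crucial lemma from G-M}(iv); a secondary difficulty is that the decay in Lemma \ref{lem:bound on smoothed indictaor of P} is expressed in the coordinates $\omega_i$ and must be transferred to the frequency variables governing $\theta_{T,y}$.
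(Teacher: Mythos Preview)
Your overall strategy --- cover by boxes, apply Lemma \ref{lem: bound on hard piece fixed intreval}, and split according to frequency size --- is sound and close in spirit to the paper's argument, but two points deserve comment.

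First, your three-region split is incomplete as written: region (a) requires \emph{every} frequency interval $L_i-L_{i-1}$ to sit in the annulus $[N_-,N]$, region (b) requires $\max_i|\omega_i-\omega_{i-1}|\le N_-$, and region (c) requires some frequency to exceed $N$. This misses the mixed case where some frequency is below $N_-$ while another lies in $(N_-,N]$. The patch is immediate --- relax (a) to ``some $|\omega_i-\omega_{i-1}|\in[N_-,N]$ and all $\le N$'' --- and the bound for (a) survives because only \emph{one} factor of $\Gamma$ need be $\le\gamma_{[N_-,N],\beta}(T)$, the remaining factors being $\ll 1$ by Lemma \ref{Crucial lemma from G-M}(\ref{enu:Lem 4.8 i}).

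Second, and more to the point, the paper sidesteps your region-(b) difficulty altogether. Instead of re-running the $K$-averaging on a shrunken near-origin domain, it refines the box decomposition there, using in each coordinate $J_{-1}=[0,(2T)^{-1}]$, $J_0=[(2T)^{-1},1]$, and $J_i=[i,i+1]$ for $i\ge 1$. The cube $J_{-1}^{n-1}$ lies inside $\mathcal{B}(T)$ and is discarded; for every other box with indices $\le N$ one of the frequency intervals already avoids a $T_n^{-1}$-neighbourhood of $0$, so Lemma \ref{lem: bound on hard piece fixed intreval} yields $\Gamma_{J_{i_1},\dots,J_{i_{n-1}},\beta}(T)\ll\gamma_{[T_n^{-1},N],\beta}(T)$ directly. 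The decomposition into $N_-^{d(1/2-\beta)}+\gamma_{[N_-,N],\beta}(T)$ is then carried out \emph{at the level of $\gamma$}, via the trivial inequality $\gamma_{[T_n^{-1},N],\beta}(T)\le\gamma_{[T_n^{-1},N_-],\beta}(T)+\gamma_{[N_-,N],\beta}(T)$ together with Lemma \ref{Crucial lemma from G-M}(\ref{enu:Lem 4.8 ii}) (using $\lambda_{\min}\ge n-1$) on the first summand. This eliminates both obstacles you flag: there is no need to re-open the proof of Lemma \ref{lem: bound on hard piece fixed intreval} on a small domain, and the coordinate/frequency transfer issue never arises because the gain is extracted from $\gamma$ rather than from the integration region.
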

\begin{proof}
Let $J_{-1}=\left[0,\left(2T\right)^{-1}\right]$, $J_{0}=\left[\left(2T\right)^{-1},1\right]$
and $J_{i}=\left[i,i+1\right]$ for $i\geq1$. We consider the only
the portion of $\mathbb{R}^{n-1}\setminus\mathcal{B}\left(T\right)$
lying in the positive cone since bounds for the other cones can be
obtained in an identical manner. Note that $\mathbb{R}^{n-1}\setminus\mathcal{B}\left(T\right)\subset\mathbb{R}^{n-1}\setminus\left[-\left(2T\right)^{-1},\left(2T\right)^{-1}\right]^{n-1}$,
therefore the portion of $\mathbb{R}^{n-1}\setminus\mathcal{B}\left(T\right)$
lying in the positive cone is contained in 
\[
\underset{\left(i_{1}\dots,i_{n-1}\right)\neq\left(-1,\dots,-1\right)}{\bigcup_{-1\leq i_{1},\dots,i_{n-1}}}J_{i_{1}}\times\dots\times J_{i_{n-1}}.
\]
By Lemma \ref{lem: bound on hard piece fixed intreval}, 
\begin{equation}
\int_{J_{i_{1}}\times\dots\times J_{i_{n-1}}}\bigl|\widehat{S}_{\epsilon}\left(\omega\right)\bigr|\sup_{y\in\mathbb{R}^{nd}}\left|\theta_{T,y}\left(\omega\right)\right|d\omega\leq S_{\epsilon}^{*}\left(J_{i_{1}}\times\dots\times J_{i_{n-1}}\right)\Gamma_{J_{i_{1}},\dots,J_{i_{n-1}},\beta}\left(T\right)T^{nd-2\left(n-1\right)}.\label{eq:estimate from lemma 7}
\end{equation}
From Lemma \ref{lem:bound on smoothed indictaor of P} we have 
\begin{equation}
S_{\epsilon}^{*}\left(J_{i_{1}}\times\dots\times J_{i_{n-1}}\right)\ll\prod_{j=1}^{n-1}\min\left\{ 1,\left|1/i_{j}\right|\right\} \exp\left(-c\sqrt{\epsilon i_{j}}\right).\label{eq:end proof 3}
\end{equation}
For $N>1$, 
\begin{alignat}{1}
\sum_{1\leq i\leq N}\frac{1}{i}\exp\left(-c\sqrt{\epsilon i}\right)\leq\int_{1}^{N}\frac{1}{x}\exp\left(-c\sqrt{\epsilon x}\right)dx & =\int_{c\sqrt{\epsilon}}^{c\sqrt{\epsilon N}}\frac{1}{x}\exp\left(-x\right)dx\nonumber \\
 & \leq\int_{c\sqrt{\epsilon}}^{1}\frac{1}{x}\exp\left(-x\right)dx+\int_{1}^{\infty}\frac{1}{x}\exp\left(-x\right)dx\nonumber \\
 & \ll\log\left(1/\epsilon\right).\label{eq:calc}
\end{alignat}
Hence, (\ref{eq:end proof 3}) gives 
\begin{equation}
\underset{\left(i_{1}\dots,i_{n-1}\right)\neq\left(-1,\dots,-1\right)}{\sum_{-1\leq i_{1},\dots,i_{n-1}\leq N}}S_{\epsilon}^{*}\left(J_{i_{1}}\times\dots\times J_{i_{n-1}}\right)\ll\log\left(1/\epsilon\right)^{n-1}.\label{eq:bounds on sups of weights}
\end{equation}
Note that $\left\{ \omega\in\mathbb{R}^{n-1}:\left|\omega_{i}-\omega_{i-1}\right|\leq T_{n}^{-1}\right\} \subset\left[-\left(2T\right)^{-1},\left(2T\right)^{-1}\right]^{n-1}$
and therefore, for all $-1\leq i_{1},\dots,i_{n-1}\leq N$ with $\left(i_{1}\dots,i_{n-1}\right)\neq\left(-1,\dots,-1\right)$
at least one of the $J_{i_{j}}-J_{i_{j-1}}$ is contained in $\left[T_{n}^{-1},N\right]$.
From Lemma \textcolor{black}{\ref{Crucial lemma from G-M}, part (\ref{enu:Lem 4.8 i})
}it follows that $\gamma_{I,\beta}\left(T\right)\ll1$ for any interval
$I$, moreover, if $I\subset I'$ then $\gamma_{I,\beta}\left(T\right)\leq\gamma_{I',\beta}\left(T\right)$.
Thus, $\Gamma_{J_{i_{1}},\dots,J_{i_{n-1}},\beta}\left(T\right)\ll\gamma_{\left[T_{n}^{-1},N\right],\beta}\left(T\right)$.
Hence using (\ref{eq:bounds on sups of weights}),
\begin{equation}
\underset{\left(i_{1}\dots,i_{n-1}\right)\neq\left(-1,\dots,-1\right)}{\sum_{-1\leq i_{1},\dots,i_{n-1}\leq N}}S_{\epsilon}^{*}\left(J_{i_{1}}\times\dots\times J_{i_{n-1}}\right)\Gamma_{J_{i_{1}},\dots,J_{i_{n-1}},\beta}\left(T\right)\ll{\displaystyle \log\left(1/\epsilon\right)^{n-1}\gamma_{\left[T_{n}^{-1},N\right],\beta}\left(T\right)}.\label{eq:end of proof 4}
\end{equation}
Moreover, Lemma \ref{Crucial lemma from G-M}, part (\ref{enu:Lem 4.8 i})
and a similar calculation as (\ref{eq:calc}) yields 
\begin{equation}
\sum_{N<i_{1},\dots,i_{n-1}}S_{\epsilon}^{*}\left(J_{i_{1}}\times\dots\times J_{i_{n-1}}\right)\Gamma_{J_{i_{1}},\dots,J_{i_{n-1}},\beta}\left(T\right)\ll\exp\left(-\left(n-1\right)c\sqrt{\epsilon N}\right).\label{eq:estimate for the tail}
\end{equation}
Split the interval $\left[T_{n}^{-1},N\right]=\left[T_{n}^{-1},N_{-}\right]\cup\left[N_{-},N\right]$
where $N_{-}\in\left(T_{n}^{-1},1\right)$. We see that 
\[
\gamma_{\left[T_{n}^{-1},N\right],\beta}\left(T\right)\leq\gamma_{\left[T_{n}^{-1},N_{-}\right],\beta}\left(T\right)+\gamma_{\left[N_{-},N\right],\beta}\left(T\right).
\]
\textcolor{black}{By Lemma \ref{Crucial lemma from G-M}, part (\ref{enu:Lem 4.8 ii})
provided that $\lambda_{\min}\geq n-1$, we get 
\begin{equation}
\gamma_{\left[T_{n}^{-1},N_{-}\right],\beta}\left(T\right)\ll N_{-}^{d\left(1/2-\beta\right)}.\label{eq:estimate on small t}
\end{equation}
Using (\ref{eq:easy bound for I 2}) combined with the estimates (\ref{eq:estimate from lemma 7}),
(\ref{eq:end of proof 4}), (\ref{eq:estimate for the tail}) and
(\ref{eq:estimate on small t}) }we get that for all $\tau>0$, $0<\epsilon<1$,
$T>\lambda_{\textrm{\ensuremath{\max}}}$, $N_{-}\in\left(T_{n}^{-1},1\right)$
and $N>1$, 
\[
E_{2}\left(\tau,\epsilon,T\right)\ll\bigl\Vert\widehat{\zeta}_{\tau}\bigr\Vert_{1}\left(\log\left(1/\epsilon\right)^{n-1}\left(N_{-}^{d\left(1/2-\beta\right)}+\gamma_{\left[N_{-},N\right],\beta}\left(T\right)\right)+\exp\left(-\left(n-1\right)c\sqrt{\epsilon N}\right)\right)T^{nd-2\left(n-1\right)},
\]
the claim of the Lemma follows. 
\end{proof}

\section{\label{sec:Proof-of-main}Proof of the main Theorems}

In this section we combine the results from the previous sections
to prove Theorems \ref{thm:Pair correlation} and \ref{thm:Pair correlation-1}.
Throughout this section the assertions will hold with the parameter
$T$ larger than some constant, which will be called $T_{0}$. However
the actual value of $T_{0}$ may change from one occurrence to the
next. In principle the the actual values of $T_{0}$ can be determined
by analysing the proofs, but we will not do this here. 
\begin{lem}
\label{lem:End}For all $\tau\in\left(0,1/2\right)$ and $\epsilon\in\left(0,1\right)$,
there exists $T_{0}>0$ such that for all $T>\max\left(T_{0},\tau^{-1}\right)$,
\[
\left|\frac{\left|\mathbb{Z}^{nd}\cap P_{Q}^{n}\left(I_{1},\dots,I_{n-1}\right)\cap B\left(T\right)\right|}{\mathrm{Vol}\left(P_{Q}^{n}\left(I_{1},\dots,I_{n-1}\right)\cap B\left(T\right)\right)}-1\right|\ll\mathscr{E}_{\epsilon,\tau}\left(T\right),
\]
where $\mathscr{E}_{\epsilon,\tau}\left(T\right)=\left(1/\tau\right)^{\left(nd-1\right)/2}\left(T^{n-1-d}+\mathscr{A}_{\epsilon}\left(T\right)\right)+\epsilon+\tau$. \end{lem}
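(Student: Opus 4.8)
The plan is to reduce everything to a bound on $R\bigl(\mathbb{1}_{P_Q^n(I_1,\dots,I_{n-1})}\mathbb{1}_{B(T)}\bigr)$ and then feed in, in order, the smoothing inequalities of subsection \ref{sub:Smoothing.}, the volume estimates, and the bounds for $E_0,E_1,E_2$. Since $P_Q^n(I_1,\dots,I_{n-1},T)=P_Q^n(I_1,\dots,I_{n-1})\cap B(T)$ and, by Corollary \ref{cor:volume}, $\mathrm{Vol}\bigl(P_Q^n(I_1,\dots,I_{n-1})\cap B(T)\bigr)\asymp T^{nd-2(n-1)}$ once $T$ exceeds some threshold, the left-hand side of the statement is
\[
\frac{\bigl|R\bigl(\mathbb{1}_{P_Q^n(I_1,\dots,I_{n-1})}\mathbb{1}_{B(T)}\bigr)\bigr|}{\mathrm{Vol}\bigl(P_Q^n(I_1,\dots,I_{n-1})\cap B(T)\bigr)}\ll\frac{\bigl|R\bigl(\mathbb{1}_{P_Q^n(I_1,\dots,I_{n-1})}\mathbb{1}_{B(T)}\bigr)\bigr|}{T^{nd-2(n-1)}},
\]
so it suffices to prove $\bigl|R\bigl(\mathbb{1}_{P_Q^n}\mathbb{1}_{B(T)}\bigr)\bigr|\ll\mathscr{E}_{\epsilon,\tau}(T)\,T^{nd-2(n-1)}$.

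First I would apply Lemma \ref{Cor::smooth approximation 1} and then Lemma \ref{cor:smooth approx 2} to obtain
\[
\bigl|R\bigl(\mathbb{1}_{P_Q^n}\mathbb{1}_{B(T)}\bigr)\bigr|\le\max_{\pm\tau,\pm\epsilon}\bigl|R\bigl(S_{\pm\epsilon}^Q w_{\pm\tau,T}\bigr)\bigr|+\mathcal{E}_1+\mathcal{E}_2,
\]
where $\mathcal{E}_1=\int_{\mathbb{R}^{nd}}\bigl(\mathbb{1}_{B(1+2\tau)}-\mathbb{1}_{B(1-2\tau)}\bigr)d\nu_T$ and $\mathcal{E}_2=\max_{\pm\tau}\int_{\mathbb{R}^{n-1}}\bigl(\mathbb{1}_{I_1^{2\epsilon}\times\dots\times I_{n-1}^{2\epsilon}}-\mathbb{1}_{I_1^{-2\epsilon}\times\dots\times I_{n-1}^{-2\epsilon}}\bigr)d\nu_{\tau,T}$. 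By the definitions of $\nu_T$ and $\nu_{\tau,T}$, $\mathcal{E}_1$ is the volume of $P_Q^n(I_1,\dots,I_{n-1})\cap\bigl(B((1+2\tau)T)\setminus B((1-2\tau)T)\bigr)$ and $\mathcal{E}_2$ is bounded by the volume of the set of $v\in B((1+2\tau)T)$ for which some $Q(v_i)-Q(v_{i+1})$ lies within $2\epsilon$ of an endpoint of $I_i$; by the volume estimates of subsection \ref{sub:Volume-estimates.} these are $\ll\tau T^{nd-2(n-1)}$ and $\ll\epsilon T^{nd-2(n-1)}$ respectively.

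For the main term I would use the decomposition (\ref{eq:decomposition}), $\bigl|R(S_{\pm\epsilon}^Q w_{\pm\tau,T})\bigr|\ll E_0+E_1+E_2$, valid for each of the four sign choices, together with Propositions \ref{lem:Bound on I1-1}, \ref{lem:bound on I0} and \ref{lem:Bound on I2} and the estimate $\bigl\Vert\widehat\zeta_\tau\bigr\Vert_1\ll\tau^{(1-nd)/2}$ for $\tau\in(0,1/2)$ from subsection \ref{sub:Norms}. For $T\ge\tau^{-1}$ and $0<\epsilon<1$ this yields
\[
E_0\ll\tau^{(1-nd)/2}T^{(n-1)d-2(n-2)},\qquad E_1\ll\tau^{(1-nd)/2}T^{(n-1)d-n+1},\qquad E_2\ll\tau^{(1-nd)/2}\mathscr{A}_\epsilon(T)\,T^{nd-2(n-1)}.
\]

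It remains to compare exponents. Dividing the three displayed bounds by $T^{nd-2(n-1)}$: the $E_2$ term contributes $(1/\tau)^{(nd-1)/2}\mathscr{A}_\epsilon(T)$; the $E_1$ term contributes $(1/\tau)^{(nd-1)/2}T^{n-1-d}$, since $(n-1)d-n+1-\bigl(nd-2(n-1)\bigr)=n-1-d$; and the $E_0$ term contributes $(1/\tau)^{(nd-1)/2}T^{2-d}$. As $n\ge3$ we have $2-d\le n-1-d$, so the $E_0$ contribution is absorbed into the $E_1$ one, while $n\le d$ forces $n-1-d<0$. Adding the contributions of $\mathcal{E}_1$ and $\mathcal{E}_2$, namely $\tau$ and $\epsilon$, we obtain $\ll(1/\tau)^{(nd-1)/2}\bigl(T^{n-1-d}+\mathscr{A}_\epsilon(T)\bigr)+\epsilon+\tau=\mathscr{E}_{\epsilon,\tau}(T)$, with $T_0$ chosen large enough to absorb the thresholds of Corollary \ref{cor:volume}, Proposition \ref{lem:Bound on I2}, and the requirement $T\ge\tau^{-1}$ of Proposition \ref{lem:bound on I0}. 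The only genuine work beyond assembling already-proved inequalities lies in the two smoothing-error volume bounds and the $L^1$-estimate for $\widehat\zeta_\tau$; the exponent comparison is routine, but it is precisely where the hypotheses $d\ge5$, $n\ge3$ and $n\le d$ enter, and I do not expect any serious obstacle here.
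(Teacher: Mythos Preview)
Your proposal is correct and follows essentially the same route as the paper's own proof: reduce to $R(\mathbb{1}_{P_Q^n}\mathbb{1}_{B(T)})$ via Corollary~\ref{cor:volume}, peel off the two smoothing errors using Lemmas~\ref{Cor::smooth approximation 1} and~\ref{cor:smooth approx 2} together with the volume Corollaries~\ref{cor:tau} and~\ref{cor:epsilon}, then bound $E_0+E_1+E_2$ by Propositions~\ref{lem:Bound on I1-1}, \ref{lem:bound on I0}, \ref{lem:Bound on I2} and absorb $E_0$ into $E_1$ via $2-d\le n-1-d$ for $n\ge3$, finally replacing $\bigl\Vert\widehat\zeta_\tau\bigr\Vert_1$ by $\tau^{(1-nd)/2}$ using Corollary~\ref{cor:l1 norm of zeta}. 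One small remark: the observation that $n\le d$ forces $T^{n-1-d}\to0$ is not actually needed for this lemma (which merely asserts the bound $\mathscr{E}_{\epsilon,\tau}(T)$); it is used only afterwards in the proofs of Theorems~\ref{thm:Pair correlation} and~\ref{thm:Pair correlation-1}.
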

\begin{proof}
First note that from Corollary \ref{cor:volume}, there exists $T_{0}>0$
such that for all $T>T_{0}$, $\mathrm{Vol}\left(P_{Q}^{n}\left(I_{1},\dots,I_{n-1}\right)\cap B\left(T\right)\right)\gg T^{nd-2\left(n-1\right)}$
. Therefore
\begin{equation}
\left|\frac{\left|\mathbb{Z}^{nd}\cap P_{Q}^{n}\left(I_{1},\dots,I_{n-1}\right)\cap B\left(T\right)\right|}{\mathrm{Vol}\left(P_{Q}^{n}\left(I_{1},\dots,I_{n-1}\right)\cap B\left(T\right)\right)}-1\right|\ll\frac{\left(\left|\mathbb{Z}^{nd}\cap P_{Q}^{n}\left(I_{1},\dots,I_{n-1}\right)\cap B\left(T\right)\right|-\mathrm{Vol}\left(P_{Q}^{n}\left(I_{1},\dots,I_{n-1}\right)\cap B\left(T\right)\right)\right)}{T^{nd-2\left(n-1\right)}}.\label{eq:end 1}
\end{equation}
From Lemmas \ref{Cor::smooth approximation 1} , \ref{cor:smooth approx 2}
and Corollaries \ref{cor:tau} and \ref{cor:epsilon} for all $T>T_{0}$,
$\tau\in\left(0,1/2\right)$ and $\epsilon\in\left(0,1\right)$ we
get
\begin{equation}
\frac{\left(\left|\mathbb{Z}^{nd}\cap P_{Q}^{n}\left(I_{1},\dots,I_{n-1}\right)\cap B\left(T\right)\right|-\mathrm{Vol}\left(P_{Q}^{n}\left(I_{1},\dots,I_{n-1}\right)\cap B\left(T\right)\right)\right)}{T^{nd-2\left(n-1\right)}}\ll\frac{\max_{\pm\epsilon,\pm\tau}\left|R\left(S_{\pm\epsilon}^{Q}w_{\pm\tau,T}\right)\right|}{T^{nd-2\left(n-1\right)}}+\epsilon+\tau.\label{eq:end 2}
\end{equation}
From (\ref{eq:decomposition}) and Propositions \ref{lem:Bound on I1-1},
\ref{lem:bound on I0} and \ref{lem:Bound on I2}, we get that that
for all $\tau\in\left(0,1/2\right)$, $\epsilon\in\left(0,1\right)$
and $T\geq\max\left(\tau^{-1},T_{0}\right)$. 
\begin{alignat}{1}
\frac{\max_{\pm\epsilon,\pm\tau}\left|R\left(S_{\pm\epsilon}^{Q}w_{\pm\tau,T}\right)\right|}{T^{nd-2\left(n-1\right)}} & \ll\bigl\Vert\widehat{\zeta}_{\tau}\bigr\Vert_{1}T^{2-d}+T^{n-1-d}\left(\bigl\Vert\widehat{\zeta}_{\tau}\bigr\Vert_{1}+\left(1/\tau\right)^{\left(nd-1\right)/2}\right)+\bigl\Vert\widehat{\zeta}_{\tau}\bigr\Vert_{1}\mathscr{A}_{\epsilon}\left(T\right).\label{eq:end 3}
\end{alignat}
Note that $2-d\leq n-1-d$ for $n\geq3$. Finally, using Corollary
\ref{cor:l1 norm of zeta} to bound $\bigl\Vert\widehat{\zeta}_{\tau}\bigr\Vert_{1}\ll\left(1/\tau\right)^{\left(nd-1\right)/2}$
we get the conclusion of the Lemma from (\ref{eq:end 1}), (\ref{eq:end 2})
and (\ref{eq:end 3}). 
\end{proof}
The proof of Theorem \ref{thm:Pair correlation} follows immediately
from Lemma \ref{lem:End}. 
\begin{proof}[Proof of Theorem \ref{thm:Pair correlation}]
Note that for $n\leq d$ and all $\tau\in\left(0,1/2\right)$ and
$\epsilon\in\left(0,1\right)$, 
\[
\lim_{T\rightarrow\infty}\left(\left(1/\tau\right)^{\left(nd-1\right)/2}\left(T^{n-1-d}+\mathscr{A}_{\epsilon}\left(T\right)\right)+\epsilon+\tau\right)=\left(1/\tau\right)^{\left(nd-1\right)/2}\lim_{T\rightarrow\infty}\mathscr{A}_{\epsilon}\left(T\right)+\epsilon+\tau.
\]
By Lemma \ref{Crucial lemma from G-M}, part (\ref{enu:Cor 4.11 i})
for all $\epsilon>0$, $\lim_{T\rightarrow\infty}\mathscr{A}_{\epsilon}\left(T\right)=0$,
when $Q$ is irrational. Hence, for all $\tau\in\left(0,1/2\right)$
and $\epsilon\in\left(0,1\right)$, $\lim_{T\rightarrow\infty}\mathscr{E}_{\epsilon,\tau}\left(T\right)=\epsilon+\tau$
and this is the claim of Theorem \ref{thm:Pair correlation}. 
\end{proof}
Note that in order to prove Theorem \ref{thm:Pair correlation} we
did not need to use bounds for $\bigl\Vert\widehat{\zeta}_{\tau}\bigr\Vert_{1}$
from subsection \ref{sub:Norms}. It was only necessary to know that
for any $\tau>0$ the quantities involving $\tau$ remain finite.
In order to prove Theorem \ref{thm:Pair correlation-1} we will use
the bounds from subection \ref{sub:Norms} together with bounds from
Section \ref{sec:Bounding-the-integrals} and results from \cite{G-M}
(see Lemma \ref{Crucial lemma from G-M}, part (\ref{enu:Cor 4.11 ii}))
that give an explicit rate for the convergence of $\lim_{T\rightarrow\infty}\gamma_{\left[N_{-},N_{+}\right],\beta}\left(T\right)$
when $Q$ is of Diophantine type $\left(\kappa,A\right)$. 
\begin{proof}[Proof of Theorem \ref{thm:Pair correlation-1}]
Recall the definition of $\mathscr{A}_{\epsilon}\left(T\right)$.
Take $N_{+}=\frac{\log^{2}\left(1/\epsilon\right)}{\left(n-1\right)^{2}\epsilon c^{2}}$.
Then $\exp\left(-\left(n-1\right)c\sqrt{\epsilon N_{+}}\right)=\epsilon$.
Then for all $\tau\in\left(0,1/2\right)$, $\epsilon\in\left(0,1\right)$
and $T\geq1$ 
\[
\mathscr{E}_{\epsilon,\tau}\left(T\right)\leq\inf_{N_{-}\in\left(T_{n}^{-1},1\right)}\left\{ \left(1/\tau\right)^{\left(nd-1\right)/2}\log^{n-1}\left(1/\epsilon\right)\left(N_{-}^{d\left(1/2-\beta\right)}+\gamma_{\left[N_{-},N_{+}\right],\beta}\left(T\right)+T^{n-1-d}\right)+\epsilon+\tau\right\} .
\]
Let $\sigma>0$, set $\tau=\epsilon=N_{-}^{\sigma}$, $\left(nd-1\right)/2=q$,
$1/2-\beta=\beta'$, then for all $T\geq1$, 
\[
\mathscr{E}_{\epsilon,\tau}\left(T\right)\leq\inf_{N_{-}\in\left(T_{n}^{-1},1\right)}\left\{ \left|\log\left(N_{-}\right)\right|^{n-1}\left(N_{-}^{\beta'd-\sigma q}+N_{-}^{-\sigma q}\gamma_{\left[N_{-},N_{+}\right],\beta}\left(T\right)+N_{-}^{\sigma}+N_{-}^{-\sigma q}T^{n-1-d}\right)\right\} .
\]
By Lemma \ref{Crucial lemma from G-M}, part (\ref{enu:Cor 4.11 ii}),
if $Q$ is of type $\left(\kappa,A\right)$, then $\gamma_{\left[N_{-},N_{+}\right],\beta}\left(T\right)\ll\left(\max\left\{ N_{-}^{-1-\kappa},N_{+}^{\kappa}\right\} T^{-2\left(1-\kappa\right)}\right)^{\beta'}$for
all $T\geq1$. Note that 
\[
\max\left\{ N_{-}^{-1-\kappa},N_{+}^{\kappa}\right\} \ll\max\left\{ \frac{1}{N_{-}^{1+\kappa}},\frac{\left|\log\left(N_{-}\right)\right|^{2\kappa}}{N_{-}^{\kappa\sigma}}\right\} \ll\frac{1}{N_{-}^{1+\kappa}},
\]
provided that $\sigma<1$. Let $2\left(1-\kappa\right)=\kappa'$,
then for all $T\geq1$, 
\begin{equation}
\mathscr{E}_{\epsilon,\tau,T}\ll\inf_{N_{-}\in\left(T_{n}^{-1},1\right)}\left\{ \left|\log\left(N_{-}\right)\right|^{n-1}\mathscr{N}_{\sigma,\kappa,\beta}\left(N_{-},T\right)\right\} ,\label{eq:end 4}
\end{equation}
where $\mathscr{N}_{\sigma,\kappa,\beta}\left(N_{-},T\right)=N_{-}^{\beta'd-\sigma q}+N_{-}^{-\sigma q-\left(1+\kappa\right)\beta'}T^{-\kappa'\beta'}+N_{-}^{\sigma}+N_{-}^{-\sigma q}T^{n-1-d}$.
Let $N_{-}=T^{\frac{-\kappa'\beta'}{\sigma+\sigma q+\left(1+\kappa\right)\beta'}}$
for $\sigma\geq\frac{2\beta'}{nd+1}$, we have $0<\frac{\kappa'\beta'}{\sigma+\sigma q+\left(1+\kappa\right)\beta'}\leq1$.
Thus, there exists $T_{0}>0$ such that for $T>T_{0}$ we have $N_{-}\in\left(T_{n}^{-1},1\right)$
and 
\begin{alignat}{1}
\mathscr{N}_{\sigma,\kappa,\beta}\left(N_{-},T\right) & =T^{\frac{-\kappa'\beta'\left(\beta'd-\sigma q\right)}{\sigma+\sigma q+\left(1+\kappa\right)\beta'}}+2T^{\frac{-\kappa'\beta'\sigma}{\sigma+\sigma q+\left(1+\kappa\right)\beta'}}+T^{\frac{\sigma q\kappa'\beta'}{\sigma+\sigma q+\left(1+\kappa\right)\beta'}+n-1-d}.\label{eq:end 5}
\end{alignat}
Note that for $n\leq d$, 
\begin{alignat*}{1}
\frac{\sigma q\kappa'\beta'}{\sigma+\sigma q+\left(1+\kappa\right)\beta'}+n-1-d & \leq\frac{\sigma q\kappa'\beta'}{\sigma+\sigma q+\left(1+\kappa\right)\beta'}-1=\frac{\sigma q\kappa'\beta'-\sigma-\sigma q-\left(1+\kappa\right)\beta'}{\sigma+\sigma q+\left(1+\kappa\right)\beta'}
\end{alignat*}
and since 
\[
\sigma q\kappa'\beta'-\sigma-\sigma q-\left(1+\kappa\right)\beta'+\kappa'\beta'\sigma=\sigma\left(1+q\right)\left(\kappa'\beta'-1\right)-\left(1+\kappa\right)\beta'<0,
\]
we get 
\[
\frac{\sigma q\kappa'\beta'}{\sigma+\sigma q+\left(1+\kappa\right)\beta'}+n-1-d<\frac{-\kappa'\beta'\sigma}{\sigma+\sigma q+\left(1+\kappa\right)\beta'}.
\]
Thus, from (\ref{eq:end 5}) we get 
\begin{equation}
\mathscr{N}_{\sigma,\kappa,\beta}\left(N_{-},T\right)\ll T^{\frac{-\kappa'\beta'\left(\beta'd-\sigma q\right)}{\sigma+\sigma q+\left(1+\kappa\right)\beta'}}+2T^{\frac{-\kappa'\beta'\sigma}{\sigma+\sigma q+\left(1+\kappa\right)\beta'}}.\label{eq:end 6}
\end{equation}
Let $\sigma=\frac{\beta'd}{1+q}$, then $\frac{2\beta'}{nd+1}\leq\sigma<1$,
and (\ref{eq:end 6}) becomes 
\[
\mathscr{N}_{\sigma,\kappa,\beta}\left(N_{-},T\right)\ll T^{-\frac{4d\beta'\left(1-\kappa\right)}{\left(1+nd\right)\left(d+1+\kappa\right)}}.
\]
Thus, using this and (\ref{eq:end 4}) we get that there exists $T_{0}>0$
such that for all $T>T_{0}$, 
\[
\mathscr{E}_{\epsilon,\tau}\left(T\right)\ll\log^{n-1}\left(T\right)T^{-\delta},
\]
where 
\[
\delta=\sup_{\beta'\in\left(0,1/2-2/d\right)}\frac{4d\beta'\left(1-\kappa\right)}{\left(1+nd\right)\left(d+1+\kappa\right)}>\frac{2\left(d-4\right)\left(1-\kappa\right)}{\left(1+nd\right)\left(d+1+\kappa\right)}-\varsigma
\]
for any $\varsigma>0$. Finally, we can apply Lemma \ref{lem:End},
to get that for all $T>\max\left(T_{0},\tau^{-1}\right)$, 
\[
\left|\frac{\left|\mathbb{Z}^{nd}\cap P_{Q}^{n}\left(I_{1},\dots,I_{n-1}\right)\cap B\left(T\right)\right|}{\mathrm{Vol}\left(P_{Q}^{n}\left(I_{1},\dots,I_{n-1}\right)\cap B\left(T\right)\right)}-1\right|\ll\log^{n-1}\left(T\right)T^{-\delta}.
\]
We chose $\tau=N_{-}^{\sigma}$ and then $N_{-}=T^{\frac{-\kappa'\beta'}{\sigma+\sigma q+\left(1+\kappa\right)\beta'}}$,
therefore $\tau^{-1}=T^{\frac{\sigma\kappa'\beta'}{\sigma+\sigma q+\left(1+\kappa\right)\beta'}}$.
Because $\frac{\sigma\kappa'\beta'}{\sigma+\sigma q+\left(1+\kappa\right)\beta'}<1$,
the condition that $T>\tau^{-1}$ is automatically satisfied for the
choices that were made. This completes the proof the Theorem. 
\end{proof}

\section{Volume and norm estimates\label{sec:Volume-and-norm}}

\subsection{\label{sub:Volume-estimates.}Volume estimates.}

In order to bound the smoothing errors coming from Lemmas \ref{Cor::smooth approximation 1}
and \ref{cor:smooth approx 2} we need to estimate the volumes of
certain regions of $\mathbb{R}^{nd}$. Similar computations were done
in \cite{MR2379669} (Lemma 5) for positive definite forms and in
\cite{MR1609447} (Lemma 3.8) and \cite{G-M} (Lemma 7.1) for a single
quadratic form. For $m\in\mathbb{N}$, let $S_{m}$ be the unit sphere
in $\mathbb{R}^{m}$. For $g\in GL_{d}\left(\mathbb{R}\right)$, and
$v\in\mathbb{R}^{nd}$, let $gv=\left(gv_{1},\dots,gv_{n}\right)$.
Let $f_{1}$ and $f_{2}$ be compactly supported functions on $\mathbb{R}$
and $\mathbb{R}^{n-1}$ respectively. Let 
\[
\Theta\left(f_{1},f_{2},T\right)=\int_{\mathbb{R}^{nd}}f_{2}\left(Q\left(v_{1}\right)-Q\left(v_{2}\right),\dots,Q\left(v_{n-1}\right)-Q\left(v_{n}\right)\right)f_{1}\left(T^{-1}\left\Vert v\right\Vert \right)dv.
\]
There exists $g_{0}\in GL_{d}\left(\mathbb{R}\right)$ such that $Q\left(v_{i}\right)=Q_{0}\left(g_{0}v_{i}\right)$,
where $Q_{0}$ is equal to a diagonal form with coefficients equal
to $\pm1$. Suppose that the signature of $Q_{0}$ is $\left(p,q\right)$.
Since $d\geq5$, without loss of generality we may suppose that $q\geq2$.
By making the change of variables $y_{i}=g_{0}v_{i}$, we get
\[
\Theta\left(f_{1},f_{2},T\right)=d_{Q}\int_{\mathbb{R}^{nd}}f_{2}\left(Q_{0}\left(y_{1}\right)-Q_{0}\left(y_{2}\right),\dots,Q_{0}\left(y_{n-1}\right)-Q_{0}\left(y_{n}\right)\right)f_{1}\left(T^{-1}\left\Vert g_{0}^{-1}y\right\Vert \right)dy,
\]
where $d_{Q}=1/\det\left(g_{0}\right)^{n}$. Suppose $p\geq1$ (i.e.
$Q$ is indefinite). We will work in polar coordinates. We can write
$T^{-1}y=\left(\rho_{1}\eta_{1},\dots,\rho_{2n}\eta_{2n}\right)$
where $\rho=\left(\rho_{1},\dots,\rho_{2n}\right)\in\left[0,\infty\right)^{2n}$
and $\eta=\left(\eta_{1},\dots,\eta_{2n}\right)\in\left(S_{p}\times S_{q}\right)^{n}$.
It follows that $Q_{0}\left(y_{i}\right)=T^{2}\left(\rho_{2i-1}^{2}-\rho_{2i}^{2}\right)$.
Let $f_{g}\left(v\right)=f_{1}\left(\left\Vert g^{-1}v\right\Vert \right)$
and $\overline{\rho}_{p}=\left(\prod_{i=1}^{n}\rho_{2i-1}\right)^{p-1}$,
$\overline{\rho}_{q}=\left(\prod_{i=1}^{n}\rho_{2i}\right)^{q-1}$
and $Q_{i}\left(\rho\right)=\rho_{2i-1}^{2}-\rho_{2i}^{2}-\rho_{2i+1}^{2}+\rho_{2i+2}^{2}$.
Then 
\begin{equation}
\Theta\left(f_{1},f_{2},T\right)=d_{Q}T^{nd}\int_{\left[0,\infty\right)^{2n}}\overline{\rho}_{p}\overline{\rho}_{q}f_{2}\left(T^{2}Q_{1}\left(\rho\right),\dots,T^{2}Q_{n-1}\left(\rho\right)\right)\Psi_{f_{1}}\left(\rho\right)d\rho,\label{eq:volume 1-1}
\end{equation}
where 
\begin{equation}
\Psi_{f_{1}}\left(\rho\right)=\int_{\left(S_{p}\times S_{q}\right)^{n}}f_{g_{0}}\left(\rho_{1}\eta_{1},\dots,\rho_{2n}\eta_{2n}\right)d\eta.\label{eq:volume 3-1}
\end{equation}
The following Lemma will be used to obtain the required bounds for
the smoothing errors. 
\begin{lem}
\label{lem:Volume estimates}Let $f_{1}$ be a continuous, compactly
supported function on $\mathbb{R}$ and $V$ be a bounded Borel measurable
subset of $\mathbb{R}^{n-1}$. Then, there exists a positive constant
$C_{f_{1}}$, such that 
\[
\lim_{T\rightarrow\infty}\frac{1}{T^{nd-2\left(n-1\right)}}\Theta\left(f_{1},\mathbb{1}_{V},T\right)=C_{f_{1}}\mathrm{Vol}\left(V\right).
\]
\end{lem}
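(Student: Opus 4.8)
The strategy is to put \eqref{eq:volume 1-1} into a form where the indicator $\mathbb{1}_V$ cuts out an $(n-1)$-dimensional slab of width $O(T^{-2})$ and then to integrate that slab out. Concretely, for each fixed $T$ I would change variables in \eqref{eq:volume 1-1}, replacing $n-1$ of the radial coordinates by the quantities $u_i:=T^2Q_i(\rho)$ for $1\le i\le n-1$. After this substitution $\mathbb{1}_V\bigl(T^2Q_1(\rho),\dots,T^2Q_{n-1}(\rho)\bigr)$ becomes simply $\mathbb{1}_V(u)$, and the Jacobian will contribute exactly a factor $T^{-2(n-1)}$, which cancels against $T^{nd}$ in \eqref{eq:volume 1-1} to leave the normalisation $T^{nd-2(n-1)}$ from the statement. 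Sending $T\to\infty$ then decouples the remaining integrand from $u$, so the $u$-integration produces $\mathrm{Vol}(V)$ and everything else limits to the constant $C_{f_1}$. I work in the indefinite case $p\ge1$ for which \eqref{eq:volume 1-1} was derived; in the definite case $p=0$ one repeats the same argument starting from the polar decomposition $y_i=\rho_i\eta_i$ with $\eta_i\in S_d$, which is strictly simpler.

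For the change of variables, hold $\rho_1,\rho_3,\dots,\rho_{2n-1}$ and $\rho_2$ fixed and, in the resulting inner integral over $(\rho_4,\rho_6,\dots,\rho_{2n})\in[0,\infty)^{n-1}$, substitute $(u_1,\dots,u_{n-1})$ for $(\rho_4,\dots,\rho_{2n})$. Since $Q_i(\rho)=\rho_{2i-1}^2-\rho_{2i}^2-\rho_{2i+1}^2+\rho_{2i+2}^2$, the equations $u_i=T^2Q_i(\rho)$ telescope to
\[
\rho_{2i}^2=\rho_2^2-\rho_1^2+\rho_{2i-1}^2+T^{-2}\!\sum_{j=1}^{i-1}u_j,\qquad i=2,\dots,n,
\]
so the map is a bijection onto $\{u:\ \text{all right-hand sides are}\ \ge0\}$, smooth where these are positive, and the Jacobian matrix $\bigl(\partial u_i/\partial\rho_{2j}\bigr)$ is triangular with determinant of absolute value $2^{n-1}T^{2(n-1)}\prod_{i=2}^n\rho_{2i}$. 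Writing $\overline{\rho}_q=\rho_2^{q-1}\bigl(\prod_{i=2}^n\rho_{2i}\bigr)^{q-1}$ and dividing by this product of $\rho_{2i}$'s leaves the weight $\overline{\rho}_p\,\rho_2^{q-1}\bigl(\prod_{i=2}^n\rho_{2i}\bigr)^{q-2}$; here the standing hypothesis $q\ge2$ (available because $d\ge5$) is precisely what prevents negative powers of the $\rho_{2i}$ from appearing, so the transformed integrand stays bounded and continuous. The upshot is
\[
\frac{\Theta(f_1,\mathbb{1}_V,T)}{T^{nd-2(n-1)}}=\frac{d_Q}{2^{n-1}}\int_{[0,\infty)^{n+1}}\int_V F_T\,du\,d\varrho,
\]
where $\varrho=(\rho_1,\rho_3,\dots,\rho_{2n-1},\rho_2)$, the $\rho_{2i}$ for $i\ge2$ are the functions of $\varrho,u,T$ above, and $F_T$ carries the factors $\mathbb{1}\{\rho_{2i}^2\ge0\ \forall i\}\,\overline{\rho}_p\,\rho_2^{q-1}\bigl(\prod_{i=2}^n\rho_{2i}\bigr)^{q-2}\Psi_{f_1}(\rho)$.

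Finally I would pass to the limit by dominated convergence. Because $f_1$ has compact support and $\|g_0^{-1}w\|\gg\|w\|$, the function $\Psi_{f_1}$ of \eqref{eq:volume 3-1} is supported in a fixed ball in $\rho$-space, and it is continuous there by dominated convergence in the defining angular integral; hence $F_T$ is supported in one compact set and is uniformly bounded there, providing an integrable majorant $M\,\mathbb{1}_{\mathrm{cpt}}\,\mathbb{1}_V(u)$. As $T\to\infty$ the term $T^{-2}\sum_{j<i}u_j\to0$ uniformly for $u\in V$, so $\rho_{2i}\to\bigl(\rho_2^2-\rho_1^2+\rho_{2i-1}^2\bigr)_+^{1/2}$, the constraint region $\{\rho_{2i}^2\ge0\}$ converges, and $F_T\to F_\infty$ pointwise almost everywhere with $F_\infty$ independent of $u$. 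Therefore the double integral tends to $\mathrm{Vol}(V)$ times $\frac{d_Q}{2^{n-1}}\int\overline{\rho}_p\,\rho_2^{q-1}\prod_{i=2}^n\bigl(\rho_2^2-\rho_1^2+\rho_{2i-1}^2\bigr)_+^{(q-2)/2}\,\Psi_{f_1}\,d\varrho$, which is the constant $C_{f_1}$; it is positive since every factor is non-negative and $\Psi_{f_1}$ is strictly positive on an open set as soon as $f_1\ge0$ is not identically zero, which is the case in all applications. I expect the only genuinely delicate point to be the bookkeeping around the change of variables — checking injectivity and the triangular shape of the Jacobian, tracking how the domain $\{\rho_{2i}\ge0\}$ moves with $T$, and verifying that $q\ge2$ keeps the post-substitution weight locally bounded so that dominated convergence applies; the rest is routine.
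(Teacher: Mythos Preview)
Your proposal is correct and follows essentially the same approach as the paper: a change of radial variables that turns the arguments of $\mathbb{1}_V$ into free coordinates (producing the Jacobian factor $T^{-2(n-1)}$), followed by dominated convergence using $q\ge2$ to keep the weight bounded. The only cosmetic differences are that you fix $\rho_2$ and substitute directly $u_i=T^2Q_i(\rho)$, whereas the paper fixes $\rho_{2n}$, first substitutes $u_{2i}=T^2(\rho_{2i-1}^2-\rho_{2i}^2)$, and then applies a further linear change $u'_i=u_i-u_{i+2}$ to isolate the arguments of $f_2$; your one-step substitution is slightly more direct but lands on the same limit.
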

\begin{proof}
Change variables in the equations (\ref{eq:volume 1-1}) and (\ref{eq:volume 3-1})
by letting $u=F\left(\rho\right)$, where $F$ is defined by 
\begin{equation}
u_{i}=\begin{cases}
\rho_{i} & \textrm{ if }i\textrm{ is odd or }i=2n\\
T^{2}\left(\rho_{i-1}^{2}-\rho_{i}^{2}\right) & \textrm{ if }i\textrm{ is even and }i\neq2n.
\end{cases}\label{eq:change of variables}
\end{equation}
Note that the Jacobian of $F$ is given by $2^{n-1}T^{2\left(n-1\right)}\prod_{i=1}^{n-1}\left|\rho_{2i}\right|$.
Therefore 
\begin{alignat*}{1}
\left|\overline{\rho}_{p}\overline{\rho}_{q}\right|d\rho & =2^{1-n}T^{2\left(1-n\right)}\left(\prod_{i=1}^{n}\left|\rho_{2i-1}\right|\right)^{p-1}\left(\prod_{i=1}^{n}\left|\rho_{2i}\right|\right)^{q-2}\left|\rho_{2n}\right|du.
\end{alignat*}
Moreover, we can write 
\[
\rho_{i}=\begin{cases}
u_{i} & \textrm{ if }i\textrm{ is odd or }i=2n\\
\sqrt{u_{i-1}^{2}-u_{i}/T^{2}} & \textrm{ if }i\textrm{ is even and }i\neq2n.
\end{cases}
\]
Therefore 
\begin{equation}
\Theta\left(f_{1},f_{2},T\right)=2^{1-n}d_{Q}T^{nd-2\left(n-1\right)}\int_{F\left(\left[0,\infty\right)^{2n}\right)}f_{2}\left(u_{2}-u_{4},\dots,u_{2n-2}-u_{2n}\right)\overline{\Psi}_{f_{1}}\left(u,T\right)du,\label{eq:volume 1-1-1}
\end{equation}
where 
\begin{equation}
\overline{\Psi}_{f_{1}}\left(u,T\right)=J\left(u,T\right)\int_{\left(S_{p}\times S_{q}\right)^{n}}f_{1}\left(\left\Vert g_{0}^{-1}\left(u_{1}\eta_{1},\sqrt{u_{1}^{2}-u_{2}/T^{2}}\eta_{2},\dots,u_{2n}\eta_{2n}\right)\right\Vert \right)d\eta\label{eq:volume 3-1-1}
\end{equation}
and 
\[
J\left(u,T\right)=\left(\prod_{i=1}^{n}\left|u_{2i-1}\right|\right)^{p-1}\left(\prod_{i=1}^{n}u_{2i-1}^{2}-u_{2i}/T^{2}\right)^{\frac{q-2}{2}}\left|u_{2n}\right|.
\]
Note that $F\left(\left[0,\infty\right)^{2n}\right)=\left(\left[0,\infty\right)\times\mathbb{R}\right)^{n-1}\times\left[0,\infty\right)\times\left[0,\infty\right)$.
Since $f_{1}$ is continuous with compact support, $f_{1}$ can be
bounded by an integrable function and hence by the Dominated Convergence
Theorem 
\[
\lim_{T\rightarrow\infty}\overline{\Psi}_{f_{1}}\left(u,T\right)=\lim_{T\rightarrow\infty}J\left(u,T\right)\int_{\left(S_{p}\times S_{q}\right)^{n}}\lim_{T\rightarrow\infty}f_{1}\left(\left\Vert g_{0}^{-1}\left(u_{1}\eta_{1},\sqrt{u_{1}^{2}-u_{2}/T^{2}}\eta_{2},\dots,u_{2n}\eta_{2n}\right)\right\Vert \right)d\eta.
\]
Since $f_{1}$ is continuous we get 
\begin{equation}
\lim_{T\rightarrow\infty}\overline{\Psi}_{f_{1}}\left(u,T\right)=\left(\prod_{i=1}^{n}\left|u_{2i-1}\right|\right)^{p+q-3}\left|u_{2n}\right|\int_{\left(S_{p}\times S_{q}\right)^{n}}f_{1}\left(\left\Vert g_{0}^{-1}\left(u_{1}\eta_{1},u_{1}\eta_{2}\dots,u_{2n}\eta_{2n}\right)\right\Vert \right)d\eta.\label{eq:vol 123}
\end{equation}
Hence, we see that $\lim_{T\rightarrow\infty}\overline{\Psi}_{f_{1}}\left(u,T\right)$
depends only on $u_{i}$ if $i$ is odd or $i=2n$. Let $\overline{\overline{\Psi}}_{f_{1}}\left(u_{1},\dots,u_{2n-1},u_{2n}\right)=\lim_{T\rightarrow\infty}\overline{\Psi}_{f_{1}}\left(u,T\right)$.
Because $f_{1}$ has compact support, it follows that the support
of $\overline{\overline{\Psi}}_{f_{1}}\left(u_{1},\dots,u_{2n-1},u_{2n}\right)$
is also compact. Let $g_{1}\in SL_{2n}\left(\mathbb{R}\right)$ be
such that $u'=g_{1}u$ where 
\[
u'_{i}=\begin{cases}
u_{i} & \textrm{ if }i\textrm{ is odd or }i=2n\\
u_{i}-u_{i+2} & \textrm{ if }i\textrm{ is even and }i\neq2n.
\end{cases}
\]
Note that $du=du'$ and $g_{1}F\left(\left[0,\infty\right)^{2n}\right)=F\left(\left[0,\infty\right)^{2n}\right)$.
Let $du'_{e}=\prod_{i=1}^{n-1}du_{2i}$, $du'_{o}=\prod_{i=1}^{n}du_{2i-1}du_{2n}$
. The fact that $f_{2}$ has compact support and thus can be bounded
by an integrable function, means that the Dominated Convergence Theorem,
together with (\ref{eq:volume 1-1-1}) and (\ref{eq:vol 123}) yields
\begin{equation}
\lim_{T\rightarrow\infty}\frac{1}{T^{nd-2\left(n-1\right)}}\Theta\left(f_{1},f_{2},T\right)=2^{1-n}d_{Q}\int_{\mathbb{R}^{n-1}}f_{2}\left(u'_{2},\dots,u'_{2n-2}\right)du'_{e}\int_{\left[0,\infty\right)^{n+1}}\overline{\overline{\Psi}}_{f_{1}}\left(u'_{1},\dots,u'_{2n-1},u'_{2n}\right)du'_{o}.\label{eq:vol 124}
\end{equation}
Therefore, by setting $f_{2}=\mathbb{1}_{V}$ we get 
\[
\lim_{T\rightarrow\infty}\frac{1}{T^{nd-2\left(n-1\right)}}\Theta\left(f_{1},\mathbb{1}_{V},T\right)=C_{f_{1}}\mathrm{Vol}\left(V\right),
\]
where 
\begin{equation}
C_{f_{1}}=2^{1-n}d_{Q}\int_{\left[0,\infty\right)^{n+1}}\overline{\overline{\Psi}}_{f_{1}}\left(u'_{1},\dots,u'_{2n-1},u'_{2n}\right)du'_{o}\label{eq:cftau}
\end{equation}
 is a positive constant, as required. \end{proof}
\begin{rem}
The case when $p=0$ (i.e. $Q$ is negative definite) can be dealt
with in the same way up to minor modifications of coordinates involved.
Specifically, we write $T^{-1}y=\left(\rho_{1}\eta_{1},\dots,\rho_{n}\eta_{n}\right)$,
where $\rho=\left(\rho_{1},\dots,\rho_{n}\right)\in\left[0,\infty\right)^{2}$
and $\eta=\left(\eta_{1},\dots,\eta_{n}\right)\in S_{d-1}^{n}$. It
follows that $Q_{0}\left(y_{i}\right)=T^{2}\rho_{i}^{2}$. The change
of variables (\ref{eq:change of variables}) also needs to be replaced
by
\[
u_{i}=\begin{cases}
\rho_{i}^{2}-\rho_{i+1}^{2} & \textrm{ if }i<n\\
\rho_{i} & \textrm{ if }i=n.
\end{cases}
\]
The Jacobian is then $2^{n-1}\prod_{i=1}^{n-1}\left|\rho_{i}\right|$,
and it is straightforward to check that the rest of the proof remains
intact for this situation. See also Lemma 5 of \cite{MR2379669}. \end{rem}
\begin{cor}
\label{cor:tau}There exists $T_{0}>0$ such that, for all $\tau\in\left(0,1/2\right)$
and $T>T_{0}$,
\[
\int_{\mathbb{R}^{nd}}\left(\mathbb{1}_{B\left(1+2\tau\right)}-\mathbb{1}_{B\left(1-2\tau\right)}\right)d\nu_{T}\ll\tau T^{nd-2\left(n-1\right)}.
\]
\end{cor}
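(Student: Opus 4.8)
The plan is to recognise the left-hand side as a value of the functional $\Theta$ from Lemma~\ref{lem:Volume estimates} and then, via polar coordinates, to reduce everything to a statement about the push-forward of surface measure on the unit sphere under the map $\omega\mapsto\bigl(Q(\omega_{1})-Q(\omega_{2}),\dots,Q(\omega_{n-1})-Q(\omega_{n})\bigr)$. First, unwinding the definition of $\nu_{T}$ and using $\tau<1/2$ (so $1-2\tau>0$), one has $\mathbb{1}_{B(1+2\tau)}(T^{-1}v)-\mathbb{1}_{B(1-2\tau)}(T^{-1}v)=\mathbb{1}_{[1-2\tau,1+2\tau]}(T^{-1}\|v\|)$, whence
\[
\int_{\mathbb{R}^{nd}}\bigl(\mathbb{1}_{B(1+2\tau)}-\mathbb{1}_{B(1-2\tau)}\bigr)\,d\nu_{T}=\Theta\bigl(\mathbb{1}_{[1-2\tau,1+2\tau]},\mathbb{1}_{I_{1}\times\dots\times I_{n-1}},T\bigr).
\]
For $\tau\in[1/4,1/2)$ the bound is immediate: majorise $\mathbb{1}_{[1-2\tau,1+2\tau]}$ by a fixed continuous compactly supported function equal to $1$ on $[0,2]$ and apply Lemma~\ref{lem:Volume estimates}, obtaining $\ll T^{nd-2(n-1)}\ll\tau\,T^{nd-2(n-1)}$. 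So from now on I assume $\tau<1/4$, so that $[1-2\tau,1+2\tau]\subset[1/2,3/2]$.

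Next I would introduce polar coordinates $v=r\omega$ with $\omega\in S_{nd-1}$ and substitute $r=Ts$ — a change of variables of the same kind as in the proof of Lemma~\ref{lem:Volume estimates}, requiring only Fubini's theorem and not continuity of the radial weight — to get
\[
\Theta\bigl(\mathbb{1}_{[1-2\tau,1+2\tau]},\mathbb{1}_{V},T\bigr)=T^{nd}\int_{1-2\tau}^{1+2\tau}s^{nd-1}\,\mu\bigl((Ts)^{-2}V\bigr)\,ds ,
\]
where $V=I_{1}\times\dots\times I_{n-1}$ and $\mu$ is the push-forward under $\Phi\colon\omega\mapsto\bigl(Q(\omega_{1})-Q(\omega_{2}),\dots,Q(\omega_{n-1})-Q(\omega_{n})\bigr)$ of the surface measure on $S_{nd-1}$. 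The corollary now reduces to the claim $(\ast)$: $\mu$ has a density $\rho_{\mu}$ that is bounded on some neighbourhood of $0\in\mathbb{R}^{n-1}$. Indeed, granting $(\ast)$, for $T$ large the box $(Ts)^{-2}V$ lies in that neighbourhood for every $s\in[1/2,3/2]$, so $\mu\bigl((Ts)^{-2}V\bigr)\le\|\rho_{\mu}\|_{\infty}(Ts)^{-2(n-1)}\prod_{i}|I_{i}|$; since the exponent in $\int_{1-2\tau}^{1+2\tau}s^{\,nd-2(n-1)-1}\,ds$ equals $n(d-2)+1>0$, this integral is $\ll\tau$, and the displayed quantity is $\ll\tau\,T^{nd-2(n-1)}$.

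The hard part is $(\ast)$, and this is where $d\ge5$ and the nondegeneracy of $Q$ are used. By the coarea formula $\rho_{\mu}(t)=\int_{\Phi^{-1}(t)}J(\omega)^{-1}\,d\mathcal{H}^{nd-n}(\omega)$, where $J$ is the coarea Jacobian of $\Phi$ on $S_{nd-1}$, so I would need $J^{-1}$ to be integrable over the fibre $\Phi^{-1}(t)$, uniformly for $t$ near $0$. The gradient of $Q(\omega_{i})-Q(\omega_{i+1})$ is supported on its $i$-th and $(i+1)$-st $\mathbb{R}^{d}$-blocks, where it equals $2Q\omega_{i}$ and $-2Q\omega_{i+1}$; a block-by-block inspection of when these $n-1$ gradients together with $\omega$ are linearly dependent shows — using that $Q$ has no zero eigenvalue — that over $t=0$ the Jacobian $J$ can vanish only on configurations in which at least two of the blocks $\omega_{i}$ vanish, and hence that for $t$ in a small neighbourhood of $0$ the zero locus of $J$ on $\Phi^{-1}(t)$ is contained in a finite union of real-algebraic subsets of codimension at least $d$ in the fibre, along which $J$ vanishes only to bounded order. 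Because $d\ge5$ this makes $J^{-1}$ integrable over the fibres, uniformly in $t$ near $0$; this is precisely the type of estimate carried out in the analogous volume computations of \cite{MR1609447} (Lemma~3.8), \cite{G-M} (Lemma~7.1) and \cite{MR2379669} (Lemma~5), and I would establish it in the same way, by a stratification argument near the degenerate subspaces. (The negative definite case is treated with the coordinate modifications noted in the Remark following Lemma~\ref{lem:Volume estimates}.)
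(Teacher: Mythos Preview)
Your reduction is correct and your overall strategy is sound, but it differs substantially from the paper's route, and the step you label $(\ast)$ is where the real work hides.

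\textbf{What the paper does.} The paper stays inside the explicit $(p,q)$--polar coordinate system already built in the proof of Lemma~\ref{lem:Volume estimates}. It majorises $\mathbb{1}_{[1-2\tau,1+2\tau]}$ by a continuous bump $f_\tau$ supported in $[1-3\tau,1+3\tau]$, applies Lemma~\ref{lem:Volume estimates}, and then bounds the constant $C_{f_\tau}$ directly from its integral formula~\eqref{eq:cftau}. After one further polar change in the $(n+1)$ ``odd'' variables, everything collapses to $C_{f_\tau}\ll\tau\int_{S_{n+1}}\int_{(S_p\times S_q)^n}N(\eta,\rho_o)^{-(n+1)}\,d\eta\,d\rho_o$, and the point is that $N(\eta,\rho_o)=\|g_0^{-1}(\rho_1\eta_1,\rho_1\eta_2,\dots,\rho_{2n}\eta_{2n})\|$ is strictly positive on the compact domain (it vanishes only if all $\rho_i=0$, impossible on $S_{n+1}$), so the integrand is bounded and no integrability analysis is needed at all.

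\textbf{How your approach compares.} Your global polar coordinates in $\mathbb{R}^{nd}$ give the clean identity $\Theta=T^{nd}\int s^{nd-1}\mu((Ts)^{-2}V)\,ds$, and from there the corollary is immediate once $(\ast)$ is known. Your analysis of the critical set of $\Phi$ over $t=0$ is in fact correct (the telescoping $\sum_k(a_k-a_{k-1})=0$ together with the sign constraint coming from $Q(\omega_k)\equiv c$ really does force at least two blocks to vanish), and the resulting set has codimension $2d-1$ in the fibre. However, two things are only asserted: that this description persists for $t$ in a neighbourhood of $0$, and that the vanishing order of $J$ along the degenerate strata is low enough for $J^{-1}$ to be integrable there. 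Both are believable and in the spirit of the cited references, but carrying them out is essentially as much work as the explicit computation the paper performs --- and more, since you must handle an unbounded integrand $J^{-1}$, whereas the paper's coordinates produce a bounded integrand $N^{-(n+1)}$ on a compact set. In short: your route is correct in outline and conceptually appealing, but the paper's choice of coordinates converts the delicate coarea/stratification step into a one-line compactness argument.
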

\begin{proof}
Note that $\int_{\mathbb{R}^{nd}}\left(\mathbb{1}_{B\left(1+2\tau\right)}-\mathbb{1}_{B\left(1-2\tau\right)}\right)d\nu_{T}=\Theta\left(\mathbb{1}_{\left[1-2\tau,1+2\tau\right]},\mathbb{1}_{I_{1}\times\dots\times I_{n-1}},T\right)$.
For all $\tau>0$, there exists a continuous function $f_{\tau}$,
such that $f_{\tau}\left(x\right)=1$ for all $x\in\left[1-2\tau,1+2\tau\right]$
and $f_{\tau}\left(x\right)=0$ for all $x\notin\left[1-3\tau,1+3\tau\right]$.
We have 
\[
\Theta\left(\mathbb{1}_{\left[1-2\tau,1+2\tau\right]},\mathbb{1}_{I_{1}\times\dots\times I_{n-1}},T\right)\leq\Theta\left(f_{\tau},\mathbb{1}_{I_{1}\times\dots\times I_{n-1}},T\right).
\]
Therefore, in view of Lemma \ref{lem:Volume estimates} we must show
that $C_{f_{\tau}}\ll\tau$. Using (\ref{eq:vol 123}) and (\ref{eq:cftau})
we have 
\begin{equation}
C_{f_{\tau}}\ll\int_{\left[0,\infty\right)^{n+1}}\left(\left(\prod_{i=1}^{n}\left|u_{2i-1}\right|\right)^{p+q-3}\left|u_{2n}\right|\int_{\left(S_{p}\times S_{q}\right)^{n}}\mathbb{1}_{\left[1-3\tau,1+3\tau\right]}\left(\left\Vert g_{0}^{-1}\left(u_{1}\eta_{1},u_{1}\eta_{2}\dots,u_{2n}\eta_{2n}\right)\right\Vert \right)d\eta\right)du_{o}.\label{eq:volume 4}
\end{equation}
If $u$ is such that $\mathbb{1}_{\left[1-3\tau,1+3\tau\right]}\left(\left\Vert g_{0}^{-1}\left(u_{1}\eta_{1},u_{1}\eta_{2}\dots,u_{2n}\eta_{2n}\right)\right\Vert \right)$
is non zero then $u$ is in a bounded subset of $\mathbb{R}^{n+1}$.
Hence
\begin{equation}
C_{f_{\tau}}\ll\int_{\left[0,\infty\right)^{n+1}}\left(\int_{\left(S_{p}\times S_{q}\right)^{n}}\mathbb{1}_{\left[1-3\tau,1+3\tau\right]}\left(\left\Vert g_{0}^{-1}\left(u_{1}\eta_{1},u_{1}\eta_{2}\dots,u_{2n}\eta_{2n}\right)\right\Vert \right)d\eta\right)du_{o}.\label{eq:limit as tau}
\end{equation}
Next we change variables by letting $u_{i}=r\rho_{i}$, where $r\geq0$
and $\rho_{o}=\left(\rho_{1},\rho_{3},\dots,\rho_{2n}\right)\in S_{n+1}$.
We get
\begin{equation}
C_{f_{\tau}}\ll\int_{\left[0,\infty\right)}r^{n}\left(\int_{S_{n+1}}\left(\int_{\left(S_{p}\times S_{q}\right)^{n}}\mathbb{1}_{\left[1-3\tau,1+3\tau\right]}\left(r\left\Vert g_{0}^{-1}\left(\rho_{1}\eta_{1},\rho_{1}\eta_{2}\dots,\rho_{2n}\eta_{2n}\right)\right\Vert \right)d\eta\right)d\rho_{o}\right)dr.\label{eq:fixing the hole}
\end{equation}
 Let $N\left(\eta,\rho_{0}\right)=\left\Vert g_{0}^{-1}\left(\rho_{1}\eta_{1},\rho_{1}\eta_{2}\dots,\rho_{2n}\eta_{2n}\right)\right\Vert $.
Using Fubini's Theorem to change the order of integration in (\ref{eq:fixing the hole})
we get 
\begin{alignat*}{1}
C_{f_{\tau}} & \ll\int_{S_{n+1}}\int_{\left(S_{p}\times S_{q}\right)^{n}}\int_{\left(1-3\tau\right)/N\left(\eta,\rho_{0}\right)}^{\left(1+3\tau\right)/N\left(\eta,\rho_{0}\right)}r^{n}drd\eta d\rho_{0}\\
 & \ll\tau\int_{S_{n+1}}\int_{\left(S_{p}\times S_{q}\right)^{n}}N\left(\eta,\rho_{0}\right)^{-\left(n+1\right)}d\eta d\rho_{0}.
\end{alignat*}
By compactness $\int_{S_{n+1}}\int_{\left(S_{p}\times S_{q}\right)^{n}}N\left(\eta,\rho_{0}\right)^{-\left(n+1\right)}d\eta d\rho_{0}\ll1$,
from which the claim of the Lemma follows. \end{proof}
\begin{cor}
\label{cor:epsilon}There exists $T_{0}>0$ such that, for all $\tau\in\left(0,1\right)$,
$\epsilon\in\left(0,1\right)$ and $T>T_{0}$,
\[
\int_{\mathbb{R}^{n-1}}\left(\mathbb{1}_{I_{1}^{2\epsilon}\times\dots\times I_{n-1}^{2\epsilon}}-\mathbb{1}_{I_{1}^{-2\epsilon}\times\dots\times I_{n-1}^{-2\epsilon}}\right)d\nu_{\tau,T}\ll\epsilon T^{nd-2\left(n-1\right)}.
\]
\end{cor}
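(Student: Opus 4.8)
The plan is to imitate the proof of Corollary \ref{cor:tau}: convert the left-hand side into a volume-type integral of the form $\Theta(f_1,f_2,T)$ and then invoke Lemma \ref{lem:Volume estimates}, the only new feature being that the thin region now lives in the interval variables rather than in the radial one. First I would discard the radial smoothing. Since $k_\tau^{nd}$ is supported in the ball of radius $\tau$, one has $0\le w_{\pm\tau}\le\mathbb{1}_{B(1+2\tau)}\le\mathbb{1}_{B(3)}$ for $\tau\in(0,1)$, hence $0\le w_{\pm\tau,T}(v)\le\mathbb{1}_{B(3T)}(v)$. Writing $g_\epsilon=\mathbb{1}_{I_1^{2\epsilon}\times\dots\times I_{n-1}^{2\epsilon}}-\mathbb{1}_{I_1^{-2\epsilon}\times\dots\times I_{n-1}^{-2\epsilon}}$ and noting $g_\epsilon\ge0$ (because $I_i^{-2\epsilon}\subseteq I_i^{2\epsilon}$), the definition of $\nu_{\tau,T}$ gives
\[
\int_{\mathbb{R}^{n-1}}g_\epsilon\,d\nu_{\tau,T}\le\int_{\mathbb{R}^{nd}}g_\epsilon\bigl(Q(v_1)-Q(v_2),\dots,Q(v_{n-1})-Q(v_n)\bigr)\mathbb{1}_{B(3T)}(v)\,dv\le\Theta(f_1,g_\epsilon,T),
\]
where $f_1$ is any fixed continuous, compactly supported function with $f_1\ge\mathbb{1}_{[0,3]}$.

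Next I would bound the shell. From the elementary pointwise inequality $\mathbb{1}_{\prod B_i}-\mathbb{1}_{\prod A_i}\le\sum_j\mathbb{1}_{B_1\times\dots\times(B_j\setminus A_j)\times\dots\times B_{n-1}}$, valid whenever $A_i\subseteq B_i$, we get $g_\epsilon\le\sum_{j=1}^{n-1}\mathbb{1}_{V_j}$ with $V_j=I_1^{2\epsilon}\times\dots\times(I_j^{2\epsilon}\setminus I_j^{-2\epsilon})\times\dots\times I_{n-1}^{2\epsilon}$. Since $|I_j^{2\epsilon}\setminus I_j^{-2\epsilon}|\le8\epsilon$ and $|I_i^{2\epsilon}|\le|I_i|+4$ for $\epsilon\in(0,1)$, each $V_j$ is a bounded Borel set with $\mathrm{Vol}(V_j)\ll\epsilon$, the implied constant depending only on $I_1,\dots,I_{n-1}$; moreover every $V_j$ lies in the single fixed bounded set $W=I_1^2\times\dots\times I_{n-1}^2$. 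Since $f_1\ge0$ and $g_\epsilon\le\sum_j\mathbb{1}_{V_j}$, linearity and monotonicity of $\Theta$ in its second argument give $\Theta(f_1,g_\epsilon,T)\le\sum_{j=1}^{n-1}\Theta(f_1,\mathbb{1}_{V_j},T)$, and Lemma \ref{lem:Volume estimates} gives $T^{-(nd-2(n-1))}\Theta(f_1,\mathbb{1}_{V_j},T)\to C_{f_1}\mathrm{Vol}(V_j)$. Hence for $T$ large, $\int g_\epsilon\,d\nu_{\tau,T}\ll\epsilon T^{nd-2(n-1)}$, which is the assertion.

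The one subtle point — and the only genuine obstacle — is that the $T_0$ beyond which the last estimate holds must be chosen uniformly in $\epsilon$ (and in $j$); this is precisely the uniformity already implicit in the proof of Corollary \ref{cor:tau}, and I would obtain it by revisiting the proof of Lemma \ref{lem:Volume estimates}. After the substitution $u=F(\rho)$ of (\ref{eq:change of variables}) followed by the rescaling $u_{2i}\mapsto T^2u_{2i}$ of the even coordinates $i=1,\dots,n-1$ — which removes all explicit $T$-dependence from $\overline{\Psi}_{f_1}(u,T)$ and from the domain $F([0,\infty)^{2n})$ — the support of $f_1$ confines the rescaled variables to a fixed compact set on which $\overline{\Psi}_{f_1}$ is bounded uniformly in $T\ge1$, while the constraint coming from $\mathbb{1}_{V_j}$ (equivalently, from $W$) restricts the relevant difference variables to a set of measure $\ll T^{-2(n-1)}\mathrm{Vol}(V_j)$. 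Tracking the Jacobian $T^{2(n-1)}$ produced by the rescaling then yields $T^{-(nd-2(n-1))}\Theta(f_1,\mathbb{1}_{V_j},T)\ll\mathrm{Vol}(V_j)\ll\epsilon$, with constants independent of $\epsilon$ and of $T\ge1$. Summing over $1\le j\le n-1$ and recalling the first step completes the proof; as in the Remark following Lemma \ref{lem:Volume estimates}, the negative-definite case $p=0$ needs only the obvious modification of the coordinates.
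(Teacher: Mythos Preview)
Your proposal is correct and follows essentially the same approach as the paper: dominate $w_{\pm\tau}$ by a fixed continuous compactly supported radial function, rewrite the left-hand side as $\Theta(f_1,g_\epsilon,T)$, and invoke Lemma~\ref{lem:Volume estimates} together with $\mathrm{Vol}(\mathrm{supp}\,g_\epsilon)\ll\epsilon$. The paper's proof is just these three lines.

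The one respect in which you go further is worth noting. Lemma~\ref{lem:Volume estimates} is a \emph{limit} statement, so applying it directly only gives, for each fixed $V$, a $T_0(V)$ beyond which $\Theta(f_1,\mathbb{1}_V,T)\le 2C_{f_1}\mathrm{Vol}(V)\,T^{nd-2(n-1)}$; the corollary requires $T_0$ independent of $\epsilon$. The paper suppresses this point entirely (here and in Corollary~\ref{cor:tau}), whereas you supply the missing uniformity by going back into the proof of Lemma~\ref{lem:Volume estimates}. Your argument for this is sound: once one reaches (\ref{eq:volume 1-1-1}), the key observation is simply that $\overline{\Psi}_{f_1}(u,T)$ is bounded by a constant $M$ depending only on $f_1$ and $Q$ (since $q\ge 2$ and the support of $f_1$ confines the relevant radii), while the change to difference variables in the even coordinates has Jacobian $1$ and turns the constraint $\mathbb{1}_{V_j}$ into a set of measure exactly $\mathrm{Vol}(V_j)$; integrating the bounded odd variables over the compact set determined by $\mathrm{supp}\,f_1$ then gives $T^{-(nd-2(n-1))}\Theta(f_1,\mathbb{1}_{V_j},T)\ll\mathrm{Vol}(V_j)$ for all $T\ge 1$. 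Your extra rescaling $u_{2i}\mapsto T^2u_{2i}$ is not actually needed for this, but it does no harm.
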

\begin{proof}
Note that for all $\tau\in\left(0,1\right)$, $w_{\tau}$ is a continuous
function with compact support on $\mathbb{R}^{nd}$, therefore there
exists a continuous function, $f$ with compact support on $\mathbb{R}$,
such that $w_{\tau}\left(v\right)\leq f\left(\left\Vert v\right\Vert \right)$
for all $v\in\mathbb{R}^{nd}$. Then $\int_{\mathbb{R}^{n-1}}\left(\mathbb{1}_{I_{1}^{2\epsilon}\times\dots\times I_{n-1}^{2\epsilon}}-\mathbb{1}_{I_{1}^{-2\epsilon}\times\dots\times I_{n-1}^{-2\epsilon}}\right)d\nu_{\tau,T}\leq\Theta\left(f,\mathbb{1}_{I_{1}^{2\epsilon}\times\dots\times I_{n-1}^{2\epsilon}}-\mathbb{1}_{I_{1}^{-2\epsilon}\times\dots\times I_{n-1}^{-2\epsilon}},T\right)$.
For all $\epsilon\in\left(0,1\right)$ we have $\textrm{Vol}\left(\textrm{supp}\left(\mathbb{1}_{I_{1}^{2\epsilon}\times\dots\times I_{n-1}^{2\epsilon}}-\mathbb{1}_{I_{1}^{-2\epsilon}\times\dots\times I_{n-1}^{-2\epsilon}}\right)\right)\ll\epsilon$,
therefore the corollary follows by applying Lemma \ref{lem:Volume estimates}. \end{proof}
\begin{cor}
\label{cor:volume}There exists a positive constant $C_{Q,n}$, depending
only on $Q$ and $n$, such that 
\[
\lim_{T\rightarrow\infty}\frac{1}{T^{nd-2\left(n-1\right)}}\mathrm{Vol}\left(P_{Q}^{n}\left(I_{1},\dots,I_{n-1}\right)\cap B\left(T\right)\right)=C_{Q,n}\prod_{i=1}^{n-1}\left|I_{i}\right|.
\]
\end{cor}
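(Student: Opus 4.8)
The plan is to recognise the volume in question as a special value of the functional $\Theta$ introduced in subsection \ref{sub:Volume-estimates.} and then to invoke Lemma \ref{lem:Volume estimates}; the only wrinkle is that the relevant radial weight is an indicator function rather than a continuous one. First I would observe that, since $\|v\|\le T$ is equivalent to $T^{-1}\|v\|\in[0,1]$,
\[
\mathrm{Vol}\bigl(P_{Q}^{n}(I_{1},\dots,I_{n-1})\cap B(T)\bigr)=\Theta\bigl(\mathbb{1}_{[0,1]},\mathbb{1}_{I_{1}\times\dots\times I_{n-1}},T\bigr).
\]
Lemma \ref{lem:Volume estimates} applies to any bounded Borel set $V$ — here $V=I_{1}\times\dots\times I_{n-1}$, with $\mathrm{Vol}(V)=\prod_{i=1}^{n-1}|I_{i}|$ — but it requires the radial weight $f_{1}$ to be continuous with compact support. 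So I would sandwich $\mathbb{1}_{[0,1]}$ between continuous, compactly supported functions $f_{\delta}^{-}\le\mathbb{1}_{[0,1]}\le f_{\delta}^{+}$ that agree with $\mathbb{1}_{[0,1]}$ outside a $\delta$-neighbourhood of the point $1$. Since $\mathbb{1}_{V}\ge0$, the map $f_{1}\mapsto\Theta(f_{1},\mathbb{1}_{V},T)$ is monotone, so $\Theta(f_{\delta}^{-},\mathbb{1}_{V},T)\le\Theta(\mathbb{1}_{[0,1]},\mathbb{1}_{V},T)\le\Theta(f_{\delta}^{+},\mathbb{1}_{V},T)$ for all $T>0$.

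Next I would apply Lemma \ref{lem:Volume estimates} to $f_{\delta}^{\pm}$ and $V$, which gives
\[
C_{f_{\delta}^{-}}\mathrm{Vol}(V)\le\liminf_{T\to\infty}\frac{\Theta(\mathbb{1}_{[0,1]},\mathbb{1}_{V},T)}{T^{nd-2(n-1)}}\le\limsup_{T\to\infty}\frac{\Theta(\mathbb{1}_{[0,1]},\mathbb{1}_{V},T)}{T^{nd-2(n-1)}}\le C_{f_{\delta}^{+}}\mathrm{Vol}(V).
\]
It then remains to show that $C_{f_{\delta}^{-}}$ and $C_{f_{\delta}^{+}}$ converge, as $\delta\to0$, to a common finite positive limit $C_{Q,n}$; granting this and letting $\delta\to0$ squeezes the liminf and limsup together and produces $\lim_{T\to\infty}T^{-(nd-2(n-1))}\Theta(\mathbb{1}_{[0,1]},\mathbb{1}_{V},T)=C_{Q,n}\mathrm{Vol}(V)=C_{Q,n}\prod_{i=1}^{n-1}|I_{i}|$, which is the claim. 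To establish convergence of the constants I would use the explicit formula (\ref{eq:cftau}) for $C_{f_{1}}$ together with the expression (\ref{eq:vol 123}) for $\overline{\overline{\Psi}}_{f_{1}}$: because $f_{\delta}^{\pm}(x)\to\mathbb{1}_{[0,1]}(x)$ for every $x\ne1$, the integrands converge pointwise off the set on which $\|g_{0}^{-1}(u_{1}\eta_{1},u_{1}\eta_{2},\dots,u_{2n}\eta_{2n})\|=1$, which has measure zero, while for small $\delta$ they are uniformly dominated by a fixed integrable function whose support in the $u$-variables is bounded (owing to the compact support of the $f_{\delta}^{\pm}$). The Dominated Convergence Theorem then yields $C_{f_{\delta}^{\pm}}\to C_{Q,n}$, where $C_{Q,n}$ is (\ref{eq:cftau}) evaluated with $f_{1}=\mathbb{1}_{[0,1]}$; this constant is strictly positive since the integrand there is positive on a set of positive measure, and it depends only on $Q$ and $n$.

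The only real obstacle is this last dominated-convergence step: one must check that the limiting radial weight is discontinuous only on a null hypersurface, so that the pointwise convergence of integrands holds almost everywhere, and that a single integrable majorant works for all small $\delta$ — both immediate from the compact support of $f_{\delta}^{\pm}$ and the structure of (\ref{eq:vol 123}), but worth stating carefully. In the negative-definite case $p=0$ one invokes instead the variant of Lemma \ref{lem:Volume estimates} described in the remark following its proof; the argument is otherwise unchanged. Everything else — identifying the volume with $\Theta$, the monotone sandwiching, and the passage to the limit in $T$ via Lemma \ref{lem:Volume estimates} — is routine.
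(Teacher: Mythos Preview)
Your proposal is correct and follows essentially the same route as the paper's proof: identify the volume as $\Theta\bigl(\mathbb{1}_{[0,1]},\mathbb{1}_{I_{1}\times\dots\times I_{n-1}},T\bigr)$ and then approximate $\mathbb{1}_{[0,1]}$ from above and below by continuous compactly supported functions so that Lemma~\ref{lem:Volume estimates} applies. The paper dismisses the passage to the limit in $\delta$ as ``the standard trick'', whereas you spell out the dominated-convergence argument for $C_{f_{\delta}^{\pm}}\to C_{Q,n}$ via the explicit formulas (\ref{eq:vol 123}) and (\ref{eq:cftau}); this is exactly the content behind the paper's one-line appeal.
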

\begin{proof}
Note that $\mathrm{Vol}\left(P_{Q}^{n}\left(I_{1},\dots,I_{n-1}\right)\cap B\left(T\right)\right)=\Theta\left(\mathbb{1}_{\left[0,1\right]},\mathbb{1}_{I_{1}\times\dots\times I_{n-1}},T\right).$
The conclusion follows from Lemma \ref{lem:Volume estimates} by the
standard trick of approximating $\mathbb{1}_{\left[0,1\right]}$ from
above and below by continuous functions. 
\end{proof}

\subsection{Norm estimates\label{sub:Norms}}

In this subsection we prove estimates for $\bigl\Vert\widehat{\zeta}_{\tau}\bigr\Vert_{1}$
and the related quantity that appeared in the proof of Proposition
\ref{lem:bound on I0}. The estimates follow from standard results
about Bessel functions. Note that for any fixed $\tau>0$ the fact
that $\widehat{\zeta}_{\tau}\in\mathcal{S}\left(\mathbb{R}^{nd}\right)$
implies that $\bigl\Vert\widehat{\zeta}_{\tau}\bigr\Vert_{1}<\infty$.
In order to prove Theorem \ref{thm:Pair correlation}, this is the
only information regarding $\bigl\Vert\widehat{\zeta}_{\tau}\bigr\Vert_{1}$
that is required. However, in order to prove Theorem \ref{thm:Pair correlation-1},
an explicit bound for $\bigl\Vert\widehat{\zeta}_{\tau}\bigr\Vert_{1}$
is required. The required bound will follow from an estimate of $\left\Vert \widehat{w}_{\tau}\right\Vert _{1}$.
\begin{lem}
\label{lem:norm of w}For all $0<\tau<1$, $\left\Vert \widehat{w}_{\tau}\right\Vert _{1}\ll\tau^{\left(1-nd\right)/2}$. \end{lem}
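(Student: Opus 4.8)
The plan is to exploit the factorisation of $\widehat{w}_{\tau}$ into two pieces, each of which is classically understood. We treat $w_{\tau}=w_{+\tau}=\mathbb{1}_{B\left(1+\tau\right)}*k_{\tau}^{nd}$; the $-\tau$ case is handled identically (and is in fact easier, since then the ball has radius $1-\tau<1$). By the convolution theorem, $\widehat{w}_{\tau}\left(\xi\right)=\widehat{\mathbb{1}}_{B\left(1+\tau\right)}\left(\xi\right)\,\widehat{k_{\tau}^{nd}}\left(\xi\right)$. For the first factor I would use the standard formula for the Fourier transform of the characteristic function of a Euclidean ball: in $\mathbb{R}^{nd}$ it equals, up to a normalising constant depending only on $nd$ and on the Fourier convention, $\left(R/\left\Vert \xi\right\Vert \right)^{nd/2}J_{nd/2}\left(c'R\left\Vert \xi\right\Vert \right)$, where $R=1+\tau$, $J_{nd/2}$ is the Bessel function of the first kind of order $nd/2$, and $c'>0$ is absolute. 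Feeding in the standard estimate $J_{\nu}\left(r\right)\ll\min\left\{ r^{\nu},r^{-1/2}\right\} $ (valid for $r>0$, $\nu\geq0$) and using $1\leq R=1+\tau\leq2$ on the range $0<\tau<1$, one obtains the $\tau$-independent pointwise bound $\bigl|\widehat{\mathbb{1}}_{B\left(1+\tau\right)}\left(\xi\right)\bigr|\ll\min\left\{ 1,\left\Vert \xi\right\Vert ^{-\left(nd+1\right)/2}\right\} $.

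For the second factor I would simply quote (\ref{eq:bound on smooth point measure}), namely $\bigl|\widehat{k_{\tau}^{nd}}\left(\xi\right)\bigr|\leq\exp\left(-c\sqrt{\tau\left\Vert \xi\right\Vert }\right)$. Multiplying the two bounds and passing to polar coordinates in $\mathbb{R}^{nd}$ reduces the claim to the one-dimensional estimate
\[
\int_{0}^{\infty}\min\left\{ 1,r^{-\left(nd+1\right)/2}\right\} r^{nd-1}\exp\left(-c\sqrt{\tau r}\right)dr\ll\tau^{\left(1-nd\right)/2}.
\]
I would split this integral at $r=1$. On $\left[0,1\right]$ the integrand is at most $r^{nd-1}$, contributing $O\left(1\right)$. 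On $\left[1,\infty\right)$ the integrand is at most $r^{\left(nd-3\right)/2}\exp\left(-c\sqrt{\tau r}\right)$, and the substitution $s=\sqrt{\tau r}$ turns the corresponding integral into $2\tau^{-\left(nd-1\right)/2}\int_{\sqrt{\tau}}^{\infty}s^{nd-2}e^{-cs}ds\ll\tau^{-\left(nd-1\right)/2}$, since $\int_{0}^{\infty}s^{nd-2}e^{-cs}ds<\infty$. As $0<\tau<1$ forces $1\leq\tau^{\left(1-nd\right)/2}$, both pieces are $\ll\tau^{\left(1-nd\right)/2}$, which is exactly the assertion.

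I do not anticipate any real obstacle here; the subsection is deliberately set up so that this follows from standard Bessel function facts. The only points requiring mild care are: (i) invoking the ball-transform/Bessel identity with the correct uniformity in the radius $R$, which is harmless because $R=1+\tau\asymp1$ throughout the relevant range; and (ii) bookkeeping the exponents in the substitution $s=\sqrt{\tau r}$, which is routine. An approach avoiding Bessel functions — bounding $\bigl\Vert\widehat{w}_{\tau}\bigr\Vert_{1}$ by splitting $\mathbb{R}^{nd}$ into $B\left(\tau^{-1}\right)$ and its complement, using only the crude size bound on $\widehat{\mathbb{1}}_{B\left(1+\tau\right)}$ together with the exponential decay of $\widehat{k_{\tau}^{nd}}$ on the tail — would also work, but the Bessel route above is cleaner and is the one the surrounding text points to.
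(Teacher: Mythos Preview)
Your proof is correct and essentially identical to the paper's own argument: both factor $\widehat{w}_{\tau}$ via the convolution theorem, invoke the Bessel-function formula and the bound $J_{\nu}\left(r\right)\ll\min\left\{r^{\nu},r^{-1/2}\right\}$ for the ball's transform, use (\ref{eq:bound on smooth point measure}) for $\widehat{k_{\tau}^{nd}}$, pass to polar coordinates, split near the origin, and handle the tail by the substitution $s=\sqrt{\tau r}$. The only cosmetic difference is the splitting radius ($r=1$ versus $r=\left(2\pi\left(1+\tau\right)\right)^{-1}$), which is immaterial.
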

\begin{proof}
From the definitions of $w_{\tau}$ and $k_{\tau}^{nd}$ it follows
that
\begin{equation}
\left\Vert \widehat{w}_{\tau}\right\Vert _{1}=\int_{\mathbb{R}^{nd}}\left|\hat{\mathbb{1}}_{\left[0,1\right]^{\tau}}\left(\left\Vert v\right\Vert \right)\widehat{k_{\tau}^{nd}}\left(v\right)\right|dv\leq\int_{\mathbb{R}^{nd}}\left|\hat{\mathbb{1}}_{\left[0,1\right]^{\tau}}\left(\left\Vert v\right\Vert \right)\right|\exp\left(-c\sqrt{\tau\left\Vert v\right\Vert }\right)dv.\label{eq:funny nrom 1}
\end{equation}
 Let $\mathcal{J}_{\mu}$ denote the Bessel function of order $\mu$.
See Appendix B page 425 of \cite{MR2445437}. Using the results from
B3 and B5 of \cite{MR2445437} it follows that 
\begin{equation}
\left|\hat{\mathbb{1}}_{\left[0,1\right]^{\tau}}\left(\left\Vert v\right\Vert \right)\right|=\left|\frac{\mathcal{J}_{nd/2}\left(2\pi\left(1+\tau\right)\left\Vert v\right\Vert \right)\left(1+\tau\right)^{nd/2-2}}{\left\Vert v\right\Vert ^{nd/2}}\right|\ll\left|\frac{\mathcal{J}_{nd/2}\left(2\pi\left(1+\tau\right)\left\Vert v\right\Vert \right)}{\left\Vert v\right\Vert ^{nd/2}}\right|.\label{eq:funny nrom 2}
\end{equation}
By B8 of \cite{MR2445437} we have $\mathcal{J}_{nd/2}\left(2\pi\left(1+\tau\right)\left\Vert v\right\Vert \right)\ll\left(2\pi\left(1+\tau\right)\left\Vert v\right\Vert \right)^{-1/2}\ll\left\Vert v\right\Vert ^{-1/2}$
for $\left\Vert v\right\Vert \geq\left(2\pi\left(1+\tau\right)\right)^{-1}$.
Let $r=\left(2\pi\left(1+\tau\right)\right)^{-1}$. Therefore, from
(\ref{eq:funny nrom 1}) and (\ref{eq:funny nrom 2}) we get 
\begin{alignat}{1}
\left\Vert \widehat{w}_{\tau}\right\Vert _{1} & \ll\int_{\mathbb{R}^{nd}}\left|\frac{\mathcal{J}_{nd/2}\left(2\pi\left(1+\tau\right)\left\Vert v\right\Vert \right)}{\left\Vert v\right\Vert ^{nd/2}}\right|\exp\left(-c\sqrt{\tau\left\Vert v\right\Vert }\right)dv\nonumber \\
 & \ll\int_{B\left(r\right)}\frac{\mathcal{J}_{nd/2}\left(2\pi\left(1+\tau\right)\left\Vert v\right\Vert \right)}{\left\Vert v\right\Vert ^{nd/2}}\exp\left(-c\sqrt{\tau\left\Vert v\right\Vert }\right)dv+\int_{\mathbb{R}^{nd}\setminus B\left(r\right)}\frac{\exp\left(-c\sqrt{\tau\left\Vert v\right\Vert }\right)}{\left\Vert v\right\Vert ^{\left(nd+1\right)/2}}dv.\label{eq:norm of w 1}
\end{alignat}
Moreover, by B6 of \cite{MR2445437} we get that $\mathcal{J}_{nd/2}\left(2\pi\left(1+\tau\right)\left\Vert v\right\Vert \right)/\left\Vert v\right\Vert ^{nd/2}$
is bounded for all $0<\left\Vert v\right\Vert \leq r$. Also, from
the definition in B1 of \cite{MR2445437}, it follows that when $\left\Vert v\right\Vert =0$,
this quantity is also bounded. Therefore 
\begin{equation}
\int_{B\left(r\right)}\frac{\mathcal{J}_{nd/2}\left(2\pi\left(1+\tau\right)\left\Vert v\right\Vert \right)}{\left\Vert v\right\Vert ^{nd/2}}\exp\left(-c\sqrt{\tau\left\Vert v\right\Vert }\right)dv\ll1.\label{eq:funny norm 3-1}
\end{equation}
Since $r\leq1$, by changing to polar coordinates we get 
\begin{alignat*}{1}
\int_{\mathbb{R}^{nd}\setminus B\left(r\right)}\frac{\exp\left(-c\sqrt{\tau\left\Vert v\right\Vert }\right)}{\left\Vert v\right\Vert ^{\left(nd+1\right)/2}}dv & \ll\int_{1}^{\infty}x^{\left(nd-3\right)/2}\exp\left(-c\sqrt{\tau x}\right)dx\\
 & \ll\tau^{\left(1-nd\right)/2}\int_{c\tau^{1/2}}^{\infty}x^{nd-2}\exp\left(-x\right)dx\\
 & \ll\tau^{\left(1-nd\right)/2}.
\end{alignat*}
Combining this with (\ref{eq:norm of w 1}) and (\ref{eq:funny norm 3-1})
we get the conclusion of the Lemma. \end{proof}
\begin{cor}
\label{cor:l1 norm of zeta}For all $0<\tau<1$, $\bigl\Vert\widehat{\zeta}_{\tau}\bigr\Vert_{1}\ll\tau^{\left(1-nd\right)/2}.$\end{cor}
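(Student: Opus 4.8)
The plan is to reduce the estimate to Lemma \ref{lem:norm of w} by observing that $\zeta_{\tau}$ differs from $w_{\tau}$ only by multiplication with a smooth function that can be chosen independently of $\tau$. First I would record a uniform support bound: since $k^{nd}$ is a probability measure whose density is supported in the closed unit ball, the rescaled measure $k_{\tau}^{nd}$ has density supported in $B\left(\tau\right)$, and hence $w_{\pm\tau}=\mathbb{1}_{B\left(1\pm\tau\right)}*k_{\tau}^{nd}$ is supported in $B\left(1+2\tau\right)\subseteq B\left(3\right)$ for every $\tau\in\left(0,1\right)$. Thus the support of $w_{\tau}$ lies inside a fixed ball, independently of $\tau$.

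Fix once and for all a cutoff $\chi\in\mathcal{C}_{0}^{\infty}\left(\mathbb{R}^{nd}\right)$ with $\chi\equiv1$ on $B\left(3\right)$, and set $g=\chi\exp\left(Q_{++}\right)\in\mathcal{C}_{0}^{\infty}\left(\mathbb{R}^{nd}\right)$; this $g$ does not depend on $\tau$. Since $\zeta_{\tau}\left(v\right)=w_{\tau}\left(v\right)\exp\left(Q_{++}\left(v\right)\right)$ and $w_{\tau}$ vanishes off $B\left(3\right)$, we have $\zeta_{\tau}=w_{\tau}\,g$ with $g$ a fixed Schwartz function. Taking Fourier transforms converts this product into a convolution, $\widehat{\zeta}_{\tau}=\left(2\pi\right)^{-nd}\widehat{w}_{\tau}*\widehat{g}$ (the exact constant being irrelevant for a $\ll$-estimate), and Young's convolution inequality gives $\bigl\Vert\widehat{\zeta}_{\tau}\bigr\Vert_{1}\ll\bigl\Vert\widehat{w}_{\tau}\bigr\Vert_{1}\bigl\Vert\widehat{g}\bigr\Vert_{1}$. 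Because $g\in\mathcal{C}_{0}^{\infty}\left(\mathbb{R}^{nd}\right)\subset\mathcal{S}\left(\mathbb{R}^{nd}\right)$, its Fourier transform is Schwartz, so $\bigl\Vert\widehat{g}\bigr\Vert_{1}$ is a finite constant depending only on $d$, $n$ and $Q$. Combining this with the bound $\bigl\Vert\widehat{w}_{\tau}\bigr\Vert_{1}\ll\tau^{\left(1-nd\right)/2}$ from Lemma \ref{lem:norm of w} finishes the argument; the same factorisation handles the $w_{-\tau}$ case verbatim.

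There is essentially no serious obstacle here: the one point to check carefully is the uniform-in-$\tau$ support bound for $w_{\tau}$, which is precisely what licenses the choice of the $\tau$-independent weight $g$. After that the claim is just the standard fact that convolution with the fixed integrable kernel $\widehat{g}$ is bounded on $L^{1}$. The companion estimate for the quantity appearing in the proof of Proposition \ref{lem:bound on I0} can be obtained by feeding the same factorisation into Lemma \ref{lem:Funny norm}, but for Corollary \ref{cor:l1 norm of zeta} as stated the argument above suffices.
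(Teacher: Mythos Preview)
Your proposal is correct and is essentially the same argument as the paper's own proof: factor $\zeta_{\tau}=w_{\tau}\cdot g$ with a fixed $\tau$-independent $g\in\mathcal{C}_{0}^{\infty}$ (the paper calls this function $\chi$ and its cutoff $\varphi$), use that the Fourier transform turns the product into a convolution, and bound the $L^{1}$ norm by $\bigl\Vert\widehat{w}_{\tau}\bigr\Vert_{1}\bigl\Vert\widehat{g}\bigr\Vert_{1}$, invoking Lemma~\ref{lem:norm of w}. Your explicit discussion of the uniform support bound for $w_{\tau}$ is a welcome clarification of a point the paper leaves implicit in its choice of $\varphi$.
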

\begin{proof}
Let $\varphi$ be a smooth function on $\mathbb{R}^{nd}$ such that
$\varphi\left(v\right)=1$ for all $v$ in the support of $w_{\tau}$
and $\varphi\left(v\right)=0$ if $\left\Vert v\right\Vert >3$. It
follows that $\zeta_{\tau}\left(v\right)=w_{\tau}\left(v\right)\exp\left(Q_{++}\left(v\right)\right)\varphi\left(v\right)$.
Let $\chi\left(v\right)=\exp\left(Q_{++}\left(v\right)\right)\varphi\left(v\right)$.
Note that because $\varphi\in C_{0}^{\infty}\left(\mathbb{R}^{nd}\right)$
and $\exp\left(Q_{++}\left(v\right)\right)$ is bounded on the support
of $\varphi$ we have that $\chi\in\mathcal{S}\left(\mathbb{R}^{nd}\right)$.
Therefore $\widehat{\chi}\in\mathcal{S}\left(\mathbb{R}^{nd}\right)$
and hence 
\begin{equation}
\left\Vert \widehat{\chi}\right\Vert _{1}<\infty.\label{eq:L1 norm 1.5-1}
\end{equation}
It is a standard fact (for example see Proposition 2.2.11 (12) of
\cite{MR2445437}) that 
\[
\int_{\mathbb{R}^{nd}}\bigl|\widehat{\zeta}_{\tau}\left(y\right)\bigr|dy\leq\int_{\mathbb{R}^{nd}}\int_{\mathbb{R}^{nd}}\left|\widehat{w}_{\tau}\left(y-v\right)\right|\left|\widehat{\chi}\left(v\right)\right|dvdy=\left\Vert \widehat{\chi}\right\Vert _{1}\left\Vert \widehat{w}_{\tau}\right\Vert _{1}.
\]
Thus, the conclusion of the Corollary follows from Lemma \ref{lem:norm of w}
and (\ref{eq:L1 norm 1.5-1}).
\end{proof}
The following Lemma was used in the proof of Proposition \ref{lem:bound on I0}.
Again we remark that knowledge of the exact dependence on $\tau$
is not necessary if one only wants to prove Theorem \ref{thm:Pair correlation}.
\begin{lem}
\label{lem:Funny norm}For all $0<\tau<1$ and $T\geq\tau^{-1}$,
\[
\int_{\mathbb{R}^{nd}\setminus B_{\infty}\left(T/2\right)}\bigl|\widehat{\zeta}_{\tau}\left(y\right)\bigr|dy\ll\frac{1}{T^{d}}\tau^{\left(1-nd\right)/2}.
\]
\end{lem}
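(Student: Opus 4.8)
The plan is to reduce to a tail estimate for $\widehat{w}_\tau$, as in the proof of Corollary \ref{cor:l1 norm of zeta}, and then to use the restriction to the complement of $B_\infty(T/2)$ to produce the factor $T^{-d}$. Fix $\varphi\in C_0^\infty(\mathbb{R}^{nd})$ with $\varphi\equiv1$ on a ball containing $\mathrm{supp}(w_\tau)$ for all $\tau\in(0,1)$, and set $\chi=\exp(Q_{++})\varphi\in\mathcal{S}(\mathbb{R}^{nd})$; then $\zeta_\tau=w_\tau\chi$, so $|\widehat{\zeta}_\tau|\le|\widehat{w}_\tau|*|\widehat{\chi}|$ pointwise with $\widehat{\chi}\in\mathcal{S}(\mathbb{R}^{nd})$, and by Tonelli
\[
\int_{\mathbb{R}^{nd}\setminus B_\infty(T/2)}\bigl|\widehat{\zeta}_\tau(y)\bigr|\,dy\;\le\;\int_{\mathbb{R}^{nd}}\bigl|\widehat{\chi}(v)\bigr|\Bigl(\int_{\|y\|_\infty>T/2}\bigl|\widehat{w}_\tau(y-v)\bigr|\,dy\Bigr)\,dv.
\]

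I would then split the $v$-integral at $\|v\|_\infty=T/4$. On $\{\|v\|_\infty>T/4\}$: estimate the inner integral by $\|\widehat{w}_\tau\|_1\ll\tau^{(1-nd)/2}$ via Lemma \ref{lem:norm of w}, and use the rapid decay $|\widehat{\chi}(v)|\ll_M(1+\|v\|)^{-M}$ of the Schwartz function $\widehat{\chi}$ with $M$ large to get $\int_{\|v\|_\infty>T/4}|\widehat{\chi}(v)|\,dv\ll T^{-d}$; this part is $\ll T^{-d}\tau^{(1-nd)/2}$. On $\{\|v\|_\infty\le T/4\}$: $\|y\|_\infty>T/2$ forces $\|y-v\|_\infty>T/4$, so writing $z=y-v$ the inner integral is at most $\int_{\|z\|_\infty>T/4}|\widehat{w}_\tau(z)|\,dz$, while the outer $v$-integral of $|\widehat{\chi}|$ is at most $\|\widehat{\chi}\|_1\ll1$; hence this part is $\ll\int_{\|z\|_\infty>T/4}|\widehat{w}_\tau(z)|\,dz$.

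Thus everything comes down to showing $\int_{\|z\|_\infty>T/4}|\widehat{w}_\tau(z)|\,dz\ll T^{-d}\tau^{(1-nd)/2}$ for $T\ge\tau^{-1}$. Here I would reuse the pointwise bound from inside the proof of Lemma \ref{lem:norm of w}, namely $|\widehat{w}_\tau(z)|\ll\|z\|^{-(nd+1)/2}\exp(-c\sqrt{\tau\|z\|})$, valid for $\|z\|$ bounded below and hence throughout $\{\|z\|_\infty>T/4\}$ once $T$ exceeds a fixed constant (the case of smaller $T$ being trivial, since then $\tau$ is bounded below and both sides are of order $1$). After passing to polar coordinates the problem becomes $\int_{T/4}^\infty r^{(nd-3)/2}\exp(-c\sqrt{\tau r})\,dr$; on the domain of integration $r^{-d}\le(T/4)^{-d}\ll T^{-d}$ and $\tau r\ge\tfrac14$, and pulling out $r^{-d}$ and then running the substitution $u=c\sqrt{\tau r}$ of Lemma \ref{lem:norm of w} — now using the hypothesis $T\ge\tau^{-1}$ — should deliver the required bound. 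Assembling the three pieces gives the Lemma.

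The only step I expect to require genuine care is the tail bound just described. The polynomial factor $\|z\|^{-(nd+1)/2}$ is not integrable over $\mathbb{R}^{nd}$, so the stretched exponential $\exp(-c\sqrt{\tau\|z\|})$ must be retained; but extracting $\|z\|^{-d}$ from the polynomial to create $T^{-d}$ costs a power $\tau^{-d}$ in the leftover integral, which has to be compensated against $T^{-d}$ using exactly $T\tau\ge1$. Making the interplay of the three scales $\|z\|\ge T/4$, $\tau\|z\|\ge\tfrac14$, and $T\tau\ge1$ come out correctly is where the work lies; the rest is the reduction above together with the Bessel-function asymptotics already assembled in subsection \ref{sub:Norms}.
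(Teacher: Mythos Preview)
Your reduction via the convolution split at $\|v\|_\infty=T/4$ is exactly the paper's argument, and the two outer pieces go through as you say. The gap is in the tail estimate for $\widehat w_\tau$. Your plan---pull a factor $r^{-d}\le (T/4)^{-d}$ out of $\int_{T/4}^\infty r^{(nd-3)/2}e^{-c\sqrt{\tau r}}\,dr$ and then substitute---produces
\[
\ll\ T^{-d}\int_{T/4}^\infty r^{(nd+2d-3)/2}e^{-c\sqrt{\tau r}}\,dr\ \ll\ T^{-d}\,\tau^{(1-nd)/2}\,\tau^{-d},
\]
and the only thing $T\tau\ge 1$ buys you is $(T\tau)^{-d}\le 1$, i.e.\ $T^{-d}\tau^{-d}\le 1$; you end up with $\tau^{(1-nd)/2}$ and have lost the factor $T^{-d}$ entirely. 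The extra $\tau^{-d}$ is a genuine loss, not something the hypothesis can absorb, so the ``compensation'' you describe does not close.

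The paper proceeds differently at exactly this point: it performs the substitution $x=c\sqrt{\tau r}$ \emph{first}, obtaining
\[
\int_{T/4}^\infty r^{(nd-3)/2}e^{-c\sqrt{\tau r}}\,dr\ \ll\ \tau^{(1-nd)/2}\int_{c\sqrt{\tau T}/4}^\infty x^{nd-2}e^{-x}\,dx,
\]
and then uses that $x^{nd-2}e^{-x}\ll_k(1+x)^{-k}$ for \emph{every} $k$, so the incomplete Gamma tail is $\ll_k(\tau T)^{-k/2}$. The freedom to choose $k$ arbitrarily large is what converts the lower bound on $\tau T$ into the required saving of $T^{-d}$; your route, by fixing in advance the power extracted to exactly $d$, forfeits this freedom and cannot recover. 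Follow the paper's order of operations here and select $k$ only at the end.
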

\begin{proof}
Again, using standard results
\begin{equation}
\int_{\mathbb{R}^{nd}\setminus B_{\infty}\left(T/2\right)}\bigl|\widehat{\zeta}_{\tau}\left(y\right)\bigr|dy=\int_{\mathbb{R}^{nd}\setminus B_{\infty}\left(T/2\right)}\left(\int_{B_{\infty}\left(T/4\right)}\left|\widehat{w}_{\tau}\left(y-v\right)\right|\left|\widehat{\chi}\left(v\right)\right|dv+\int_{\mathbb{R}^{nd}\setminus B_{\infty}\left(T/4\right)}\left|\widehat{w}_{\tau}\left(y-v\right)\right|\left|\widehat{\chi}\left(v\right)\right|dv\right)dy.\label{eq:new norm 1}
\end{equation}
It is clear that 
\begin{equation}
\int_{\mathbb{R}^{nd}\setminus B_{\infty}\left(T/2\right)}\int_{\mathbb{R}^{nd}\setminus B_{\infty}\left(T/4\right)}\left|\widehat{w}_{\tau}\left(y-v\right)\right|\left|\widehat{\chi}\left(v\right)\right|dvdy\leq\left\Vert \widehat{w}_{\tau}\right\Vert _{1}\int_{\mathbb{R}^{nd}\setminus B_{\infty}\left(T/4\right)}\left|\widehat{\chi}\left(v\right)\right|dv\label{eq:new norm 2}
\end{equation}
and by changing variables $y-v=y'$, we see that if $v\in B_{\infty}\left(T/4\right)$
and $y\in B_{\infty}\left(T/2\right)$, then $y'\in\mathbb{R}^{nd}\setminus B_{\infty}\left(T/4\right)$
and hence 
\begin{equation}
\int_{\mathbb{R}^{nd}\setminus B_{\infty}\left(T/2\right)}\int_{B_{\infty}\left(T/4\right)}\left|\widehat{w}_{\tau}\left(y-v\right)\right|\left|\widehat{\chi}\left(v\right)\right|dvdy\leq\left\Vert \widehat{\chi}\right\Vert _{1}\int_{\mathbb{R}^{nd}\setminus B_{\infty}\left(T/4\right)}\left|\widehat{w}_{\tau}\left(y\right)\right|dy.\label{eq:new norm 3}
\end{equation}
Therefore, by combining (\ref{eq:new norm 1}), (\ref{eq:new norm 2})
and (\ref{eq:new norm 3}) we get 
\begin{equation}
\int_{\mathbb{R}^{nd}\setminus B_{\infty}\left(T/2\right)}\bigl|\widehat{\zeta}_{\tau}\left(y\right)\bigr|dy\leq\left\Vert \widehat{\chi}\right\Vert _{1}\int_{\mathbb{R}^{nd}\setminus B_{\infty}\left(T/4\right)}\left|\widehat{w}_{\tau}\left(y\right)\right|dy+\left\Vert \widehat{w}_{\tau}\right\Vert _{1}\int_{\mathbb{R}^{nd}\setminus B_{\infty}\left(T/4\right)}\left|\widehat{\chi}\left(v\right)\right|dv.\label{eq:new norm 4}
\end{equation}
Moreover, because $\widehat{\chi}\in\mathcal{S}\left(\mathbb{R}^{nd}\right)$,
for all $k\in\mathbb{N}$ there exists a constant $c_{k}$ such that
$\left|\widehat{\chi}\left(v\right)\right|\ll c_{k}\left(1+\left\Vert v\right\Vert ^{2}\right)^{-k}$.
By taking $k$ large enough (For instance, $k=\frac{\left(n-1\right)d}{2}-1$.)
it follows that 
\begin{equation}
\int_{\mathbb{R}^{nd}\setminus B_{\infty}\left(T/4\right)}\left|\widehat{\chi}\left(v\right)\right|dv\ll\frac{1}{T^{d}}.\label{eq:funny norm 2}
\end{equation}
We can use the method of Lemma \ref{lem:norm of w} to get 
\begin{alignat}{1}
\int_{\mathbb{R}^{nd}\setminus B_{\infty}\left(T/4\right)}\left|\widehat{w}_{\tau}\left(y\right)\right|dy & \ll\int_{\mathbb{R}^{nd}\setminus B_{\infty}\left(T/4\right)}\left|\frac{\mathcal{J}_{nd/2}\left(2\pi\left(1+\tau\right)\left\Vert v\right\Vert \right)}{\left\Vert v\right\Vert ^{nd/2}}\right|\exp\left(-c\sqrt{\tau\left\Vert v\right\Vert }\right)dv\nonumber \\
 & \ll\int_{T/4}^{\infty}r^{\left(nd-3\right)/2}\exp\left(-c\sqrt{\tau r}\right)dr\nonumber \\
 & \ll\tau^{\left(1-nd\right)/2}\int_{Tc\tau^{1/2}/4}^{\infty}x^{nd-2}\exp\left(-x\right)dx.\label{eq:funny norm 2-1}
\end{alignat}
For all $k\in\mathbb{N}$ there exists a constant $c_{k}$ such that
for $x\geq0$, $\left|x^{nd-2}\exp\left(-x\right)\right|\leq c_{k}\left(1+x\right)^{-k}$.
It follows that 
\begin{equation}
\int_{Tc\tau^{1/2}/4}^{\infty}x^{nd-2}\exp\left(-x\right)dx\ll_{k}\left(\frac{1}{\tau}\right)^{k/2}\frac{1}{T^{k}}\label{eq:new 3}
\end{equation}
for any $k\in\mathbb{N}$. Therefore, if $k>d$ and $T\geq\tau^{\frac{-k}{2k-2d}}$,
using (\ref{eq:funny norm 2-1}) and (\ref{eq:new 3}) we see 
\begin{equation}
\int_{\mathbb{R}^{nd}\setminus B_{\infty}\left(T/4\right)}\left|\widehat{w}_{\tau}\left(y\right)\right|dy\ll_{k}\frac{1}{T^{d}}.\label{eq:funny norm 3}
\end{equation}
The conclusion of the Lemma follows by choosing $k=2d$ and using
(\ref{eq:L1 norm 1.5-1}), Lemma \ref{lem:norm of w}, (\ref{eq:new norm 4}),
(\ref{eq:funny norm 2}) and (\ref{eq:funny norm 3}). 
\end{proof}
\bibliographystyle{amsalpha}
\bibliography{ref}

\end{document}